\theoremstyle{plain}
\newtheorem{theorem}{Theorem}[section]
\newtheorem{proposition}[theorem]{Proposition}
\newtheorem{lemma}[theorem]{Lemma}
\newtheorem*{theorem*}{Theorem}
\newtheorem*{lemma*}{Lemma}
\newtheorem*{proposition*}{Proposition}
\newtheorem*{corollary*}{Corollary}
\theoremstyle{definition}
\newtheorem{definition}[theorem]{Definition}
\newtheorem{remark}[theorem]{Remark}
\newtheorem*{definition*}{Definition}
\newtheorem*{example*}{Example}
\newtheorem*{remark*}{Remark}
\newcommand{\TO}{T_{\text{OOCF}}}
\newcommand{\TE}{T_{\text{EICF}}}
\newcommand{\bH}{\mathbb{H}}
\newcommand{\bC}{\mathbb C}
\newcommand{\bR}{\mathbb R}
\newcommand{\bQ}{\mathbb Q}
\newcommand{\bZ}{\mathbb Z}
\newcommand{\bN}{\mathbb N}
\newcommand{\slr}{\text{SL}_2(\mathbb{R})}
\newcommand{\slz}{\text{SL}_2(\mathbb{Z})}
\newcommand{\veps}{\varepsilon}
\DeclareMathOperator{\ldab}{\langle\!\langle}
\DeclareMathOperator{\rdab}{\rangle\!\rangle}
\DeclareMathOperator{\If}{if}
\DeclareMathOperator{\forr}{for}
\DeclareMathOperator{\all}{all}
\DeclareMathOperator{\any}{any}
\DeclareMathOperator{\st}{such~that}
\DeclareMathOperator{\andd}{and}
\DeclareMathOperator{\im}{Im}
\definecolor{brickred}{rgb}{0.8, 0.25, 0.33}
\newcommand{\hocir}[3]{
\filldraw[fill = #3](-5+5*#1/#2,0) arc (-90:270:5*1/#2*1/#2*0.5); \node at (-5+5*#1/#2,-0.2) {$\frac {#1}{#2}$};
}
\newcommand{\hocirr}[3]{
\filldraw[fill = #3](-16+50*#1/#2,0) arc (-90:270:50*1/#2*1/#2*0.5); 
}
\newcommand{\stkout}[1]{\ifmmode\text{\sout{\ensuremath{#1}}}\else\sout{#1}\fi}
\numberwithin{equation}{section}
\begin{document}

\title{Odd-Odd Continued Fraction Algorithm}

\author{Dong Han Kim}
\address{Department of Mathematics Education, Dongguk University--Seoul, 30 Pildong-ro 1-gil, Jung-gu, Seoul, 04620 Korea}
\email{kim2010@dongguk.edu}

\author{Seul Bee Lee}
\address{Centro di Ricerca Matematica Ennio de Giorgi, Scuola Normale Superiore, Piazza dei Cavalieri 3, 56126 Pisa, Italy}
\email{seulbee.lee@sns.it}
%\address{Department of Mathematical Sciences, Seoul National University, Kwanak-ro 1, Kwanak-gu, Seoul, 08826 {  Korea}}
%\email{seulbee.lee@snu.ac.kr}

\author{Lingmin Liao}
\address{Univ Paris Est Creteil, CNRS, LAMA, F-94010 Creteil, France  \& Univ Gustave Eiffel, LAMA, F-77447 Marne-la-Vall\'ee, France}
\email{lingmin.liao@u-pec.fr}

\subjclass[2010]{Primary 11J70; Secondary 37E05.}
%11J70: Continued fractions and generalizations
%37E05: Maps of the interval
\keywords{Continued fractions; Diophantine approximation; Romik system}

%\thanks{Research partially supported by the National Research Foundation of Korea (NRF-2018R1A2B6001624).}

\begin{abstract}
By using a jump transformation associated to the Romik map, we define a new continued fraction algorithm called odd-odd continued fraction, whose principal convergents are rational numbers of odd denominators and odd numerators. 
Among others, it is proved that all the best approximating rationals of odd denominators and odd numerators of an irrational number are given by the principal convergents of the odd-odd continued fraction algorithm and vice versa. 
\end{abstract} 

\maketitle

\section{Introduction}\label{Sec:Intro}
One main topic of Diophantine approximation studies the approximation of an irrational number by rational numbers.
Given an irrational number $x$, we call a rational $p/q$ \emph{a best approximation of $x$} if
$$\left|qx-p\right| < \left|bx-a\right| ~\forr~\any~ \frac{a}{b}\not=\frac{p}{q} ~\st~ 0<b \le q.$$
%where $p/q$ and $a/b$ are irreducible fractions with positive denominators.
Here, and in the whole paper, by convention, when we write a rational number $p/q$, we always assume that $p \in \bZ$,  $q \in \bN$ and $p$ and $q$ are coprime.
The  celebrated Lagrange Theorem (see \cite[Chapter II]{RoSz92} and \cite[Section 6]{Khi63}) states that the best approximations of an irrational number $x$ are the convergents, i.e., the finite truncations, of the 
\emph{regular continued fraction} (RCF) of $x$:
\begin{equation}\label{Eq:RCF}
x= d_0+\cfrac{1}{d_1+\cfrac{1}{{d_2+\cfrac{1}{\ddots}}}}, %\quad \quad ~\orr~ \quad d_0+\cfrac{1}{d_1+\cfrac{1}{\ddots+\cfrac{1}{d_{n-1}+\cfrac{1}{d_n}}}}% \quad \text{with} \ d_0\in \mathbb{Z},  \ \text{and} \ d_j\in \mathbb{N} \ (j\geq 1).
\end{equation}
where $d_0\in\bZ$ and $d_j\in\bN$, for $j\ge 1$.
%We denote it by 
%$$[d_0;d_1,d_2\cdots,d_j,\cdots].$$
%It is known that every irrational number $x$ has an RCF expansion which means that there exists $\{d_j\}_{j\ge 1}$ such that the following sequence
More precisely, a rational $p/q$ is a best approximation of an irrational $x$ if and only if it is one of the convergents:
\begin{equation*}
\frac{p^R_0}{q^R_0}:=d_0, ~ \frac{p^R_1}{q^R_1}:=d_0+\frac{1}{d_1}, ~\frac{p^R_2}{q^R_2}:=d_0+\frac{1}{d_1+\frac{1}{d_2}}, ~\cdots
\end{equation*}

Let $\bH = \{z\in\bC: \im(z)>0\}$ be the %Poincar\'{e} 
upper half-plane. % model of the hyperbolic plane.
The group $\slr$ acts on $\mathbb{H}$ as isometries defined by
$$g = \begin{pmatrix}a&b\\c&d\end{pmatrix}: z \mapsto g(z) = \frac{az+b}{cz+d}.$$
%which we call \emph{M\"{o}bius transformations}.
%The RCF is related to the action of $\slz$.
There is a close connection between the geodesics on the modular surface $\slz\backslash\bH$ and  the RCF algorithm (e.g.   \cite{series1985modular}).
%The RCF is related to the action of $\slz$ on $\slr$.
Especially, the orbit $\slz(\infty)=\bQ$ corresponds to a unique cusp of $\slz\backslash\bH$. 
Let $$
\Theta = \left\{\begin{pmatrix}a&b\\c&d\end{pmatrix} \in \slz \ : \ \begin{pmatrix}a&b\\c&d\end{pmatrix}\equiv \begin{pmatrix}1&0\\0&1\end{pmatrix} \text{ or }  \begin{pmatrix}0&-1 \\ 1 &0\end{pmatrix} \pmod 2 \right\}.
$$
Then $\Theta$ is a subgroup of $\slz$ of index $3$ and the quotient space $\Theta\backslash\bH$ is a hyperbolic surface with two cusps corresponding to the orbits $\Theta(\infty)$ and $\Theta(1)$ of $\infty$ and $1$. 
Kraaikamp--Lopes \cite{KrLo96} and Boca--Merriman \cite{BoMe18} found that the geodesics on $\Theta\backslash\bH$ is strongly related to the
%The modular surface $\slz\backslash\bH$ has a unique cusp which corresponds to a class of parabolic fixed points $\slz.\infty$. 
\emph{even integer continued fraction} (EICF) introduced by Schweiger \cite{Sch82, Sch84}, which is a continued fraction with even integers such that 
%was investigated by Schweiger in \cite{Sch82}, \cite{Sch84}. 
%An EICF is of the form
\begin{equation}\label{Eq:eicf}
b_0 + \cfrac{\eta_0}{b_1 + \cfrac{\eta_1}{b_2 +  \ddots } }, %= : [(b_1, \eta_1), (b_2, \eta_2), \dots ],
\end{equation}
where $b_0\in 2\bZ$, $b_i \in 2\mathbb N$ for $i\ge 1$ and $\eta_i \in \{ -1, +1\}$.
%{\color{teal}It was introduced by Schweiger in \cite{Sch82}, \cite{Sch84}.}

We classify rational numbers into two classes by the orbits $\Theta(\infty)$ and $\Theta(1)$.
If $p/q\in \Theta(\infty)$, then $p$ and $q$ are of different parity.
If $p/q \in \Theta(1)$, then $p$ and $q$ are both odd. 
We call a rational number in $\Theta(\infty)$ \emph{an $\infty$-rational} and a rational number in $\Theta(1)$ \emph{a $1$-rational}. The proportion of odd/even, even/odd and odd/odd in the RCF convergents was investigated by Moeckel \cite{Moe82}. Further, the asymptotic density of the RCF convergents whose denominators and numerators satisfying congruence equations was obtained by Jager-Liadet \cite{JaLi88}.

%{\color{blue}Moeckel \cite{Moe82} investigated the proportion of odd/even, even/odd and odd/odd in the RCF principal convergents with the principal congruence subgroup $\Gamma(2)$.}
%{\color{teal}$\leftarrow$ Lee: I added the citations according to comment 8 but I am afraid that it does not look natural. Also, I am not sure how to cite Jager-Liadet's result because they treated Dirichlet's theorem. We might give citations as follows:}
%{ DK: I think Jager-Liadet also treated  {\color{teal}$\rightarrow$ Lee: Ok. You mean the following sentence is ok?:}}
%{\color{blue}There are results of the asymptotic density of the RCF principal convergents whose denominators and numerators satisfying congruence equations \cite{Moe82}, \cite{JaLi88}.}
%{\color{teal} But I am not sure it is a good way.}

%There are two different types of rationals, one is called $\infty$-rationals which are of different parity of numerator and denominator, and the other is $1$-rationals which have odd numerator and odd denominator. Denote by $\Theta(\infty)$ and $\Theta(1)$ respectively these two different types of rationals. (See Section Section \ref{Sec:Map} for the reason of the name and the notation of $\infty$-rationals and $1$-rationals).
Short and Walker \cite{ShWa14} defined \emph{a best $\infty$-rational approximation of $x$} by a rational $p/q\in\Theta(\infty)$ satisfying
\begin{equation}\label{Eq:SW}
\left|qx-p\right| < \left|bx-a\right| ~\forr~\any~ \frac{a}{b}\in\Theta(\infty) \text{ apart from } \frac{p}{q} ~\st~ 0<b \le q,
\end{equation}
and showed that the best $\infty$-rational approximations are convergents of EICF.  

Our motivation of the paper is to study the best $1$-rational approximations of an irrational number defined as follows. %Analogous to Short and Walker's best $\infty$-rational approximations, we have the following definitions.
\begin{definition}
For $x\in\bR\setminus\bQ$, $p/q \in \Theta(1)$ is a \emph{best $1$-rational approximation} of $x$ if 
\begin{equation}\label{Eq:bestapp}
|qx-p|< |bx-a| ~\forr~\any \frac{a}{b}\in\Theta(1) \text{ apart from } \frac{p}{q} \st 0<b\le q.
\end{equation}
\end{definition}
We introduce a new continued fraction, called the odd-odd continued fraction (OOCF, see Section \ref{Sec:Map}) of the form 
$$
 1 - \cfrac{1}{a_1 + \cfrac{\veps_1}{ 2 - \cfrac{1}{a_2 + \cfrac{\veps_2}{ 2 - \cfrac{1}{\ddots}}}}},
$$
where $a_n \in \mathbb N$, and $\veps_n \in \{1,-1\}$ for $a_n \ge 2$ and $\veps_n = 1$ for $a_n = 1$.
Our first main theorem is the following.
\begin{theorem}\label{Thm:main1}
A fraction $p/q$ is a best $1$-rational approximation of an irrational number $x$ if and only if it is one of the principal convergents of the odd-odd continued fraction of $x$.
\end{theorem}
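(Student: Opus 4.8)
The plan is to mirror the proof of Lagrange's theorem for the RCF, but carried out inside the sublattice of odd/odd integer vectors and with the unimodularity relation replaced by its even analogue. Throughout I would work with the $2\times2$ integer matrices generating the OOCF: the maps $z\mapsto 1-1/z$, $z\mapsto a_n+\veps_n/z$ and $z\mapsto 2-1/z$ are elementary generators, and a truncation of the OOCF at the $n$-th block is the image of the tail under a product $M_n$ of them, so that the $n$-th principal convergent $p_n/q_n$ is read off from $M_n$. The first task (Step~1) is to extract from the Romik jump description the recurrence for $(p_n,q_n)$ and, crucially, the cross-determinant identity
\[
p_n q_{n-1}-p_{n-1}q_n=\pm 2 .
\]
This replaces the relation $=\pm1$ of the RCF: since consecutive principal convergents are both $1$-rationals, their cross-determinant is automatically even, and the block structure forces it to equal exactly $\pm2$; together with oddness this yields $\gcd(p_n,q_n)=1$, so each $p_n/q_n$ is a genuine $1$-rational. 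In the same step I would verify that the $q_n$ strictly increase to infinity, that $p_n/q_n\to x$, that the residuals $|q_nx-p_n|$ strictly decrease, and that consecutive convergents lie on opposite sides of $x$, i.e. $q_{n-1}x-p_{n-1}$ and $q_nx-p_n$ have opposite signs.

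The second step shows that every principal convergent is a best $1$-rational approximation. The decisive structural fact is that, because $(p_n,q_n)$ and $(p_{n+1},q_{n+1})$ are both odd/odd with cross-determinant $\pm2$, the lattice $\bZ(p_n,q_n)+\bZ(p_{n+1},q_{n+1})$ is precisely the index-two sublattice of $\bZ^2$ of vectors congruent to $(0,0)$ or $(1,1)\pmod 2$. Hence every odd/odd vector $(a,b)$, and in particular every $1$-rational $a/b$, has a \emph{unique} expression $(a,b)=\alpha(p_n,q_n)+\beta(p_{n+1},q_{n+1})$ with $\alpha,\beta\in\bZ$ and $\alpha+\beta$ odd. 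Assume $0<b<q_{n+1}$ and $a/b\ne p_n/q_n$. If $\beta=0$ then coprimality forces $\alpha=\pm1$ and $a/b=p_n/q_n$, excluded; if $\alpha=0$ or if $\alpha,\beta$ share a sign, then $b=\alpha q_n+\beta q_{n+1}\ge q_{n+1}$, excluded. Thus $\alpha,\beta$ are nonzero of opposite signs, and since the residuals straddle $x$ the two terms of $bx-a=\alpha(q_nx-p_n)+\beta(q_{n+1}x-p_{n+1})$ reinforce, giving
\[
|bx-a|=|\alpha|\,|q_nx-p_n|+|\beta|\,|q_{n+1}x-p_{n+1}|>|q_nx-p_n|.
\]
So $p_n/q_n$ is the unique best $1$-rational approximation among all denominators $b<q_{n+1}$; the parity bookkeeping has done exactly the work carried out by unimodularity in the classical second-kind argument.

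The third step is the converse. Given a best $1$-rational approximation $p/q$, the denominators $q_n$ increase to infinity with $q_0=1$, so there is an $n$ with $q_n\le q<q_{n+1}$ (the case $q=q_0=1$ recovers the leading convergent $p_0/q_0=1$). Suppose $p/q\ne p_n/q_n$. Applying the defining inequality \eqref{Eq:bestapp} of $p/q$ to the competitor $p_n/q_n$, whose denominator satisfies $q_n\le q$, gives $|qx-p|<|q_nx-p_n|$; applying Step~2 to the competitor $p/q$, whose denominator satisfies $q<q_{n+1}$, gives $|qx-p|>|q_nx-p_n|$. These contradict, so $p/q=p_n/q_n$, completing the equivalence.

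The main obstacle I anticipate is Step~1: translating the dynamical definition through the Romik jump transformation into the clean arithmetic data above. Two points demand care. First, the exact value $\pm2$ of the cross-determinant and, above all, the straddling sign pattern must be tracked through the sign digits $\veps_n$ and the special rule $\veps_n=1$ when $a_n=1$; unlike the purely additive RCF, the minus signs in $1-1/z$ and $2-1/z$ make the side on which each convergent falls genuinely sensitive to the $\veps_n$, and one must confirm the algorithm is arranged so that principal convergents alternate sides. Second, one must check that the fractions produced by the individual Romik steps \emph{between} consecutive jumps—the intermediate convergents—never outperform the flanking principal convergents as $1$-rational approximations, i.e. that grouping the Romik steps into OOCF blocks selects exactly the second-kind best approximations. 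Once the determinant identity, residual monotonicity, and straddling are secured, the lattice argument of Steps~2 and~3 is robust and essentially forced.
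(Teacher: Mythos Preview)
Your index-$2$ sublattice observation is correct and the overall strategy is a reasonable alternative to the paper's Ford-circle argument, but Step~2 rests on a claim that is false for the OOCF: consecutive principal convergents do \emph{not} always straddle $x$. One checks from \eqref{Eq:completequotient2} that $q_nx-p_n$ and $q_{n+1}x-p_{n+1}$ have the same sign precisely when $\veps_{n+1}=-1$. For instance, any $x$ whose OOCF begins $(2,-1),(2,-1)$ satisfies $x<p_2/q_2=1/5<p_1/q_1=1/3<p_0/q_0=1$, so three consecutive convergents sit on one side of $x$. In that regime your displayed identity $|bx-a|=|\alpha|\,|q_nx-p_n|+|\beta|\,|q_{n+1}x-p_{n+1}|$ becomes a difference rather than a sum, and the inequality does not follow as written. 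You flagged the sign pattern as a point needing verification; the verification fails.

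The lattice argument can be repaired. Residual monotonicity does hold---one computes $|q_nx-p_n|/|q_{n-1}x-p_{n-1}|=(1-\zeta_{n+1})/(1+\zeta_{n+1})<1$ directly from \eqref{Eq:completequotient2}---and in the same-side case the constraints $0<b<q_{n+1}$, $\alpha\beta<0$ and $\alpha+\beta$ odd force the coefficient attached to $(p_n,q_n)$ to dominate the other in absolute value by at least one; combined with monotonicity this still yields $|bx-a|>|q_nx-p_n|$ after a short case split. The paper's proof bypasses the issue entirely by replacing the comparison between two $1$-rationals with a comparison between $p_n/q_n$ and the $\infty$-rational pseudo-convergent $p_n''/q_n''$: Lemma~\ref{Lem:xbtw} shows $x$ always lies between these, and the Ford-circle tangencies between $C_{p_n/q_n}$ and the adjacent $\infty$-rational circles $C_{p_n'/q_n'}$, $C_{p_n''/q_n''}$ supply the needed inequalities without any alternation hypothesis.
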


For RCF, Lagrange and Euler proved that an irrational number has eventually periodic RCF if and only if it is a quadratic irrational. (See \cite[Chapter III-\S1]{RoSz92} and \cite[Section 10]{Khi63}). For OOCF, we have the following second main theorem.
\begin{theorem}\label{Thm:main2}
An eventually periodic OOCF expansion converges to an $\infty$-rational or a quadratic irrational.
Moreover, a quadratic irrational has an eventually periodic OOCF expansion.
\end{theorem}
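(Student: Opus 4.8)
The plan is to prove the two assertions separately, transporting the classical Lagrange--Euler argument for the regular continued fraction into the matrix formalism of the OOCF. The starting observation is that each of the building blocks $z\mapsto 1-1/z$, $z\mapsto a_n+\veps_n/z$ and $z\mapsto 2-1/z$ is the M\"obius action of an integer matrix of determinant $\pm 1$, so every finite truncation of the OOCF is $M(z)$ with $M\in\mathrm{GL}_2(\bZ)$.

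For the first assertion (the Euler direction), suppose the expansion is eventually periodic and write $x=N(\alpha)$, where $N\in\mathrm{GL}_2(\bZ)$ encodes the pre-period and $\alpha$ is the value of the purely periodic tail. Letting $P=\begin{pmatrix}a&b\\c&d\end{pmatrix}\in\mathrm{GL}_2(\bZ)$ be the matrix of one period, the tail satisfies the fixed-point relation $\alpha=P(\alpha)$, i.e. $c\alpha^2+(d-a)\alpha-b=0$. This is a quadratic (or linear, when $c=0$) equation over $\bZ$, so $\alpha$ is rational or a quadratic irrational; moreover $P$ is hyperbolic with $\alpha$ as attracting fixed point, which is what guarantees that the principal convergents actually converge to $\alpha$. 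Since $N\in\mathrm{GL}_2(\bZ)$ sends rationals to rationals and quadratic irrationals to quadratic irrationals, $x=N(\alpha)$ is rational or a quadratic irrational. Finally, if $x$ is rational it cannot be a $1$-rational: by the construction of the OOCF in Section~\ref{Sec:Map} every $1$-rational has a terminating expansion, so an infinite, eventually periodic expansion never converges to a $1$-rational. Hence a rational limit is an $\infty$-rational, as claimed.

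For the converse (the Lagrange direction), let $x$ be a quadratic irrational with $ax^2+bx+c=0$, $a,b,c\in\bZ$, $a\neq 0$, discriminant $D=b^2-4ac$, and conjugate $\bar x$. Let $x_n$ be the $n$-th tail, so that $x=M_n(x_n)$ with $M_n\in\mathrm{GL}_2(\bZ)$ built from the principal convergents $P_n/Q_n$. Substituting $x=M_n(x_n)$ into the quadratic shows that $x_n$ satisfies $A_nx_n^2+B_nx_n+C_n=0$ over $\bZ$, and $\det M_n=\pm1$ forces discriminant preservation $B_n^2-4A_nC_n=D$. Factoring $at^2+bt+c=a(t-x)(t-\bar x)$ yields
\[
A_n=a\,Q_n^2\Big(\tfrac{P_n}{Q_n}-x\Big)\Big(\tfrac{P_n}{Q_n}-\bar x\Big),\qquad C_n=a\,Q_{n-1}^2\Big(\tfrac{P_{n-1}}{Q_{n-1}}-x\Big)\Big(\tfrac{P_{n-1}}{Q_{n-1}}-\bar x\Big).
\]
Since $P_n/Q_n\to x$, the factor $|P_n/Q_n-\bar x|$ is bounded, while $Q_n^2|P_n/Q_n-x|=Q_n|Q_nx-P_n|$ is bounded because the principal convergents are best $1$-rational approximations (Theorem~\ref{Thm:main1}) and thus satisfy $|Q_nx-P_n|=O(1/Q_n)$. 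Hence $|A_n|$ is bounded; the same estimate at $P_{n-1}/Q_{n-1}$ bounds $|C_n|$, and then $B_n^2=D+4A_nC_n$ is bounded too. Therefore the triples $(A_n,B_n,C_n)$ lie in a finite set, each quadratic has at most two roots, and so the tails $x_n$ take finitely many values; by the pigeonhole principle $x_m=x_n$ for some $m<n$, and since a tail determines all subsequent digits, the OOCF expansion of $x$ is eventually periodic.

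I expect the main obstacle to be the boundedness step, which rests on the quantitative estimate $|Q_nx-P_n|=O(1/Q_n)$ for the OOCF convergents, equivalently on the conjugate $\bar x_n$ remaining in a region bounded away from the domain of the map. Because of the signs $\veps_n$ and the alternation between the partial quotients $a_n$ and the constant $2$, the growth of $Q_n$ and the approximation rate are not literally those of the RCF, so the decisive work is in verifying this estimate; I expect the cleanest route is to exhibit a bounded invariant domain for the natural extension of the OOCF map, on which the dual coordinate records $\bar x_n$ and is manifestly bounded. Once such a domain is available, boundedness of $(A_n,B_n,C_n)$ and hence eventual periodicity follow at once.
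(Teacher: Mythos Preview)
Your overall plan—the classical Lagrange argument via a quadratic for the tail with preserved discriminant and bounded coefficients—is exactly what the paper does, and your Euler direction is essentially the paper's argument. The difference, and the place where your proposal has a genuine gap, is in how you set up the matrix $M_n$ and hence in the boundedness step.

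You write $x=M_n(x_n)$ with $M_n\in\mathrm{GL}_2(\bZ)$ ``built from the principal convergents $P_n/Q_n$'' and then use the formulas $A_n=aQ_n^2(P_n/Q_n-x)(P_n/Q_n-\bar x)$, etc. But the product $A_{(a_1,\veps_1)}\cdots A_{(a_n,\veps_n)}$ that sends the tail $\zeta_{n+1}$ to $x$ has columns $(p_n',q_n')^T$ and $(p_n'',q_n'')^T$, the sub- and pseudo-convergents; the matrix $\begin{pmatrix}p_n & p_{n-1}\\ q_n & q_{n-1}\end{pmatrix}$ you implicitly use has determinant $\pm 2$ by \eqref{eq:det}, so it is not the transition matrix and does not act on $\zeta_{n+1}$. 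Consequently the quantity you must bound is $q_n'|q_n'x-p_n'|$ (and its primed-double analogue), not $q_n|q_nx-p_n|$. This matters: because of the indifferent fixed point at $0$ (long runs of $(2,-1)$), $q_n|q_nx-p_n|$ is \emph{not} uniformly bounded for general irrationals, so your appeal to Theorem~\ref{Thm:main1} cannot rescue the estimate—the best-approximation property gives no rate, and a natural-extension argument would be circular here without independent control on the tails.

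The paper's fix is much simpler than constructing a bounded natural-extension domain. From \eqref{eq:recursivematrix}--\eqref{eq:recursivematrix2} the sub- and pseudo-convergents are Farey neighbours, $|p_n'q_n''-p_n''q_n'|=1$, and by Lemma~\ref{Lem:xbtw} together with Lemma~\ref{Lem:relation_of_convergents} the point $x$ lies between $p_n'/q_n'$ and $p_n''/q_n''$. Hence $|q_n'x-p_n'|<1/q_n''$ and $|q_n''x-p_n''|<1/q_n'$ with no further work. Substituting $p_n'=q_n'x+\delta/q_n''$, $p_n''=q_n''x+\lambda/q_n'$ into the coefficient formulas, one bounds $\beta_{n+1}$ directly, bounds the easier of $\alpha_{n+1},\gamma_{n+1}$ according to whether $q_n''\ge q_n'$ or not (so that the ratio $q_n'/q_n''$ or $q_n''/q_n'$ is at most $1$), and then gets the remaining coefficient from discriminant preservation $\beta_{n+1}^2-4\alpha_{n+1}\gamma_{n+1}=\beta_1^2-4\alpha_1\gamma_1$. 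So the ``decisive work'' you anticipate collapses once you use the correct pair of convergents.
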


We also investigate the relation between the OOCF and the RCF. We show that for any real number $x$, the principal convergents of its OOCF are intermediate convergents of its RCF (Theorem \ref{Thm:intermediate}). Further, we can convert RCF expansions into OOCF expansions (Theorem \ref{Thm:convert}). 

\medskip
Our paper is organized as follows. In Section~\ref{Sec:Map}, we introduce the OOCF algorithm and give some basic properties of OOCFs. %\sout{the OOCF map} $T_\mathrm{OOCF}$.
In Section~\ref{Sec:OOCF} we study the principal convergents of OOCFs, and prove Theorem \ref{Thm:main2}.
Section~\ref{Sec:BestA} is devoted to the proof of Theorem \ref{Thm:main1}. %, we prove that all the best approximating 1-rationals are obtained by the principal convergents of OOCFs, and vice versa. 
The relations between the OOCF expansions and the RCF expansions are described in the last section. %Section~\ref{Sec:RCF}.  

\section{OOCF algorithm}\label{Sec:Map}
It is known that the {\it partial quotients} $d_j=d_j(x)$ of the RCF of an irrational number $x$ as in \eqref{Eq:RCF} can be generated by the \emph{Gauss map} $G: [0,1] \to [0,1]$ defined by 
$$
 G (x) = \left\{ \frac 1x \right\} \ \text{for }x\in(0,1], \quad \text{and} \quad G(0)=0,
$$
where $\{ \cdot \}$ is the fractional part.
In fact, for an irrational $x$ we have $d_j(x) = \lfloor 1/G^{j-1}(x) \rfloor$ for $j\ge 1$ with $\lfloor \cdot \rfloor$ being the integer part. Further, Gauss map is a {jump transformation} associated to the \emph{Farey map} defined by 
$$
F (x) = 
\begin{cases}
\dfrac{x}{1-x} &\If~ 0 \le x \le \dfrac 12, \vspace{0.2cm} \\
\dfrac{1-x}{x} &\If~ \dfrac 12 \le x \le 1. \\
\end{cases}
$$ 
In general, let $U: [0,1]\to [0,1]$ be a map and $E$ be a subset of $[0,1]$.
\emph{The first hitting time of $x\in [0,1]$ to $E$} is defined by 
$$n_E(x):=\min\{j\ge 0: U^j(x)\in E\}.$$
A map $J:[0,1]\to[0,1]$ is called \emph{the jump transformation associated to $U$ with respect to $E$} (e.g. \cite[Chapter 19]{schweiger1995ergodic}) if
$$J(x) = U^{n_E(x)+1}(x) ~\forr~\all~ x\in[0,1].$$%\setminus \bQ.$$
We can easily check that $G$ is the jump transformation associated to $F$ with respect to $E=\{0\}\cup (1/2,1]$. In fact, 
$$G(x) = F^{n_{E}(x)+1}(x) ~\forr~ x\in [0,1].$$
We also note that $n_E(x)+1$ is exactly the first partial quotient $a_1$ of the RCF expansion of $x$.

%An \emph{even integer continued fraction} (EICF) is a continued fraction with even integers such that 
%%was investigated by Schweiger in \cite{Sch82}, \cite{Sch84}. 
%%An EICF is of the form
%\begin{equation}\label{Eq:eicf}
%b_0 + \cfrac{\eta_0}{b_1 + \cfrac{\eta_1}{b_2 +  \ddots } }, %= : [(b_1, \eta_1), (b_2, \eta_2), \dots ],
%\end{equation}
%where $b_0\in 2\bZ$, $b_i \in 2\mathbb N$ for $i\ge 1$ and $\eta_i \in \{ -1, +1\}$. 
Similar to the RCF, the partial quotients of EICF in \eqref{Eq:eicf} can be obtained by the EICF map $T_\mathrm{EICF}:[0,1]\to[0,1]$ defined by
$$
T_\mathrm{EICF} (x) = 
\begin{cases}
\dfrac{1}{x} - 2k &\text{ if }  ~\dfrac{1}{2k+1} \le x \le \dfrac{1}{2k}, \vspace{0.2cm} \\
2k - \dfrac{1}{x} &\text{ if }   ~\dfrac{1}{2k} \le x \le \dfrac{1}{2k-1},
\end{cases}\quad \text{for all }k \in \bN,
\quad \andd \quad T_\mathrm{EICF} (0) =0.
$$
The map $T_\mathrm{EICF}$ turns out to be a jump transformation of the following \emph{Romik map} 
\begin{equation}\label{Eq:Romik}
R(x) = 
\begin{cases}
\dfrac{x}{1-2x} &\If~ 0 \le x \le \dfrac 13, \vspace{0.2cm}\\
\dfrac1{x} - 2 &\If~ \dfrac 13 \le x \le \dfrac 12, \vspace{0.2cm}\\
2 - \dfrac1{x} &\If~ \dfrac 12 \le x \le 1, \\
\end{cases}
\end{equation}
introduced by Romik in \cite{Rom08}. In fact,  letting $E_1:=\{0\}\cup [1/3,1]$, we have 
\begin{equation*}
T_\mathrm{EICF} (x) = R^{n_{E_1}(x)+1} (x) \ ~\forr~ x \in [0,1]. %\stkout{\quad \text{ where }n_E(x) = \min \{ j \ge 0 : R^j (x) \in E \}. {\color{red} x=0?}}
\end{equation*}
The Romik map was used to investigate an algorithm generating the Pythagorean triples by multiplying matrices \cite{Ber34, Alp05, Bar63, Con, CNT}.
%Here, 0 represents rational numbers of $\infty$-rationals and 1 represents 1-rationals.
Some number theoretical properties of the Romik map were recently shown in \cite{CK1, CK2}.
%\les{The map $\TE$ is the jump transformation associated to $R$ with respect to $E_1:=[1/3,1]$, i.e.,}
%The EICF map is the acceleration of $R$ \lee{with respect to the hitting} of $E := [\frac 13, 1] $.
%In fact, we have
Panti \cite{Pan20} studied the connection of the Romik map with the billiards in the hyperbolic plane.
%{\color{teal}Panti considered a continued fraction generated by the Romik map and its periodicity in connection with the periodic trajectories of billiards in the hyperbolic plane \cite{Pan20}.} 

\smallskip
Instead of $E_1$, we choose $E_2 = [0, 1/2]\cup \{1\}$ %\sout{which is corresponding to a cusp $\infty$ of $\Theta\backslash\bH$.}
and define  $T_\mathrm{OOCF}:[0,1]\to[0,1]$, called the \emph{odd-odd continued fraction (OOCF) map}, by the jump transformation associated to the Romik map $R$ with respect to $E_2$, i.e., 
$$ T_\mathrm{OOCF} (x) = R^{n_{E_2}(x)+1} (x) \ \text{ for }x\in[0,1].$$
%We define $T_\mathrm{OOCF} (1)=1.$
By simple calculation, we have
\begin{equation}\label{TOOCF}
\quad T_\mathrm{OOCF} (x) = 
\begin{cases}
\dfrac{k x - (k-1) }{k - (k+1) x} &\If~  \dfrac{k-1}{k} \le x \le \dfrac{2k -1}{2k+1}, \vspace{0.2cm} \\
\dfrac{k - (k+1) x}{k x - (k-1) } &\If~   \dfrac{2k -1}{2k+1} \le x \le \dfrac{k}{k+1},
\end{cases}
\quad\text{for all }k\ge 1
\quad \text{and} \quad
T_\mathrm{OOCF} (1)=1.  
\end{equation}
The graph of $\TO$ is shown in Figure~\ref{Fig_T}.
%Evidently, the map $T_\mathrm{OOCF}$ has \lee{an indifferent fixed point at 0} \les{(see Figure~\ref{Fig_T}).}
%\les{
% \sout{since the principal convergents of the continued fractions induced by $\TO$ have odd denominators and odd numerators.}
%%We will explain more detail later.
%}

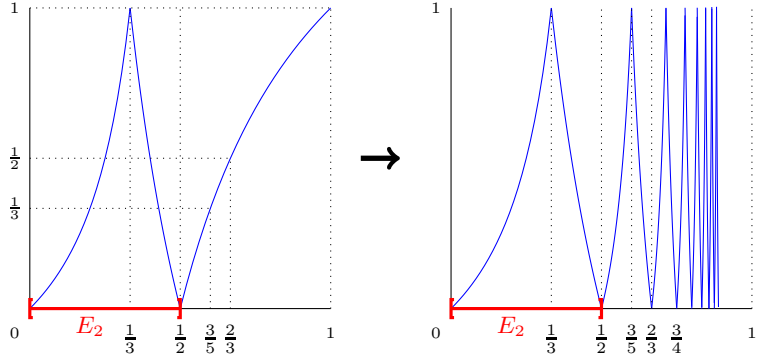
\begin{figure}[t]
\begin{tikzpicture}[scale = 4]
\draw (0,0) -- (1,0);
\draw (0,0) -- (0,1);
\draw[domain=0:1/3, blue] plot (\x, {\x/(1-2*\x)});
\draw[domain=1/3:1/2, blue] plot (\x,{1/\x-2});
\draw[domain=1/2:1, blue] plot (\x, {2-1/\x});

\draw[dotted, thin] (1/2,0) -- (1/2,1);

\node at (-0.05,1){\tiny$1$};
\node at (1,-0.08){\tiny$1$};
\node at (-0.05,-0.08){\tiny$0$};
\node at (-0.05,1/2){\tiny$\frac 12$};
\node at (-0.05,1/3){\tiny$\frac 13$};

\draw[dotted, thin] (1/3,0) -- (1/3,1);

\node at (1/2,-0.1) {\small$\frac12$};
\node at (1/3,-0.1) {\small$\frac13$};
\node at (2/3,-0.1) {\small$\frac23$};
\node at (3/5,-0.1) {\small$\frac35$};

\draw[red, very thick] (0,0) -- (1/2,0);
\draw[red, very thick] (0.01,0.03) -- (0,0.03) -- (0,-0.03) -- (0.01,-0.03);
\draw[red, very thick] (1/2-0.01,0.03) -- (1/2,0.03) -- (1/2,-0.03) -- (1/2-0.01,-0.03);

\node[red] at (0.2,-0.06){\small$E_2$};

\draw[dotted, thin](3/5,0)--(3/5,1/3);
\draw[dotted, thin](2/3,0)--(2/3,1/2);
\draw[dotted, thin](0,1)--(1,1);
\draw[dotted, thin](1,0)--(1,1);
\draw[dotted, thin](0,1/2)--(2/3,1/2);
\draw[dotted, thin](0,1/3)--(3/5,1/3);

\draw[->, line width = 2pt] (1.1,0.5) -- (1.23,0.5);

\draw (1.4,0) -- (1.4+1,0);
\draw (1.4+0,0) -- (1.4+0,1);
\draw[domain=1.4+0:1.4+1/3, blue] plot (\x, {(\x-1.4)/(1-2*(\x-1.4))});
\draw[domain=1.4+1/3:1.4+1/2, blue] plot (\x,{1/(\x-1.4)-2});
%\draw[domain=1.4+1/2:1.4+1, blue] plot (\x, {2-1/(\x-1.4)});
\draw[domain=1.4+1/2:1.4+3/5, blue] plot (\x, {(2*(\x-1.4)-1)/(2-3*(\x-1.4))});
\draw[domain=1.4+3/5:1.4+2/3, blue] plot (\x, {(2-3*(\x-1.4))/(2*(\x-1.4)-1)});
\draw[domain=1.4+2/3:1.4+5/7, blue] plot (\x, {(3*(\x-1.4)-2)/(3-4*(\x-1.4))});
\draw[domain=1.4+5/7:1.4+3/4, blue] plot (\x, {(3-4*(\x-1.4))/(3*(\x-1.4)-2)});
\draw[domain=1.4+3/4:1.4+7/9, blue] plot (\x, {(4*(\x-1.4)-3)/(4-5*(\x-1.4))});
\draw[domain=1.4+7/9:1.4+4/5, blue] plot (\x, {(4-5*(\x-1.4))/(4*(\x-1.4)-3)});
\draw[domain=1.4+4/5:1.4+9/11, blue] plot (\x, {(5*(\x-1.4)-4)/(5-6*(\x-1.4))});
\draw[domain=1.4+9/11:1.4+5/6, blue] plot (\x, {(5-6*(\x-1.4))/(5*(\x-1.4)-4)});
\draw[domain=1.4+5/6:1.4+11/13, blue] plot (\x, {(6*(\x-1.4)-5)/(6-7*(\x-1.4))});
\draw[domain=1.4+11/13:1.4+6/7, blue] plot (\x, {(6-7*(\x-1.4))/(6*(\x-1.4)-5)});
\draw[domain=1.4+6/7:1.4+13/15, blue] plot (\x, {(7*(\x-1.4)-6)/(7-8*(\x-1.4))});
\draw[domain=1.4+13/15:1.4+7/8, blue] plot (\x, {(7-8*(\x-1.4))/(7*(\x-1.4)-6)});
\draw[domain=1.4+7/8:1.4+15/17, blue] plot (\x, {(8*(\x-1.4)-7)/(8-9*(\x-1.4))});
\draw[domain=1.4+15/17:1.4+8/9, blue] plot (\x, {(8-9*(\x-1.4))/(8*(\x-1.4)-7)});

%\draw[domain=1.4:1.4+1/3, blue] plot (\x, {(\x-1.4)/(1-2*(\x-1.4))});
%\draw[domain=1.4+1/3:1.4+1/2, blue] plot (\x,{1/(\x-1.4)-2});
%\draw[domain=1.4+1/2:1.4+1, blue] plot (\x, {2-1/(\x-1.4)});
\draw[dotted, thin] (1.4+1,0)--(1.4+1,1);
\draw[dotted, thin] (1.4+1/2,0) -- (1.4+1/2,1);

\node at (1.4-0.05,1){\tiny$1$};
\node at (1.4+1,-0.08){\tiny$1$};
\node at (1.4-0.05,-0.08){\tiny$0$};
\node at (1.4+1/2,-0.1) {\small$\frac12$};
%\node at (1.4-0.05,1/2){\tiny$\frac 12$};
%\node at (1.4-0.05,1/3){\tiny$\frac 13$};

\draw[dotted, thin] (1.4+1/3,0) -- (1.4+1/3,1);
\draw[dotted, thin] (1.4+3/5,0) -- (1.4+3/5,1);
\draw[dotted, thin] (1.4+2/3,0) -- (1.4+2/3,1);
\node at (1.4+1/3,-0.1) {\small$\frac13$};
\node at (1.4+3/5,-0.1) {\small$\frac35$};
\node at (1.4+2/3,-0.1) {\small$\frac23$};
\node at (1.4+3/4,-0.1) {\small$\frac34$};

\draw[red, very thick] (1.4,0) -- (1.4+1/2,0);
\draw[red, very thick] (1.4+0.01,0.03) -- (1.4+0+0,0.03) -- (1.4+0+0,-0.03) -- (1.4+0+0.01,-0.03);
\draw[red, very thick] (1.4+1/2-0.01,0.03) -- (1.4+1/2,0.03) -- (1.4+1/2,-0.03) -- (1.4+1/2-0.01,-0.03);

\node[red] at (1.4+0.2,-0.06){\small$E_2$};
%\draw[dotted, thin](1.4+1/4,0)--(1.4+1/4,1);
%\draw[dotted, thin](1.4+1/5,0)--t(1.4+1/5,1);
%\draw[step = 0.1, very thin, color=gray!40] (0,0) grid (2.5,1); \node at (0,0){$\bullet$};
\end{tikzpicture}
\caption{The graph of $R$ (left) and the graph of $T_\mathrm{OOCF}$ (right)}\label{Fig_T}
\end{figure}

Using $T_{\text{OOCF}}$, we can induce an OOCF expansion of $x\in[0,1]$.
For convenience, let $T:=\TO$.
%\lee{
We partition $[0,1]$ into the subintervals defined by
\begin{equation}\label{Eq:partition}
B(k+1,-1):=\left[\frac{k-1}{k},\frac{2k-1}{2k+1}\right]\quad \text{ and }\quad B(k,1):=\left[\frac{2k-1}{2k+1},\frac{k}{k+1}\right] \quad \text{ for }k\in\bN.
\end{equation}
%{ \sout{Denote the set of endpoints in \eqref{Eq:partition}, which are $1$-rationals and $\infty$-rationals, respectively, }
%\begin{equation}\label{Eq:singularpts}
%\sout{\mathcal{U}:=\bigcup_{k\geq 1} \left\{{2k-1 \over 2k+1}\right\}\quad \text{ and }\quad \mathcal{V}:=  \bigcup_{k\geq 1} \left\{{k \over k+1}\right\}.} %, \text{ respectively.}
%\end{equation}}}
By \eqref{TOOCF}, if $x\neq 1$, then 
\begin{equation}\label{iteration:1and2}
\dfrac{1}{1-x}=
\begin{cases}
{(k+1)+\dfrac{-1}{2-(1-Tx)}} & \If~x\in B(k+1,-1), \vspace{0.2cm} \\ %\left[\dfrac{k-1}{k},\dfrac{2k-1}{2k+1}\right], \vspace{0.2cm}\\
{k+\dfrac{1}{2-(1-Tx)}} & \If~x\in B(k,1),%\left[\dfrac{2k-1}{2k+1},\dfrac{k}{k+1}\right].
\end{cases}
\quad\text{for all }k\ge 1.
\end{equation}
%{\color{red}{(Lee: This sentence seems to be similar to the next sentence.$\rightarrow$ )}}
Thus, for all $x \notin \bigcup_{i =0}^{n-1} T^{-i}( \{ 1 \} )$
% $x\in [0,1] \setminus \bigcup_{n\geq 0} T^{-n}(\mathcal{U} \cup \mathcal{V}\cup\{0\})$,
\begin{align}\label{eq:iteration}
1-x = \cfrac{1}{a_1 + \cfrac{\veps_1}{ 2 - \cfrac{1}{{{\ddots \cfrac{\ddots}{a_n + \cfrac{\veps_n}{2- (1-T^nx)}}}}}}},
\end{align}
where 
{
\begin{equation}\label{Eq:letter}
(a_n,\veps_n)=
\begin{cases}
(k+1,-1), ~ \text{ if } T^{n-1}(x)\in B(k+1,-1), \vspace{0.2cm} \\%\left[\dfrac{k-1}{k},\dfrac{2k-1}{2k+1}\right],\vspace{0.2cm}\\
(k,1) \text{,}~\quad\quad\quad\text{if } T^{n-1}(x)\in B(k,1). %\left[\dfrac{2k-1}{2k+1},\dfrac{k}{k+1}\right].
\end{cases}
\end{equation}

In particular, if $x=0$, then $T(0)=0$ and we have 
$$1-x=\cfrac{1}{2+\cfrac{-1}{2-(1-Tx)}}.$$
Thus, $0$ has a unique infinite OOCF expansion: $0=[\![(2,-1), (2,-1), \cdots]\!]= [\![(2,-1)^\infty]\!]$.
If $x = \frac{k}{k+1} \in B(k,1) \cap B(k+2,-1)$ for some $k \ge 1$. Then $T(x) = 0$ and we have both choices in \eqref{iteration:1and2}:
$$
1-x= \frac{1}{k+1} = \cfrac{1}{k+2+\cfrac{-1}{2-(1-Tx)}}= \cfrac{1}{k+\cfrac{1}{2-(1-Tx)}}.$$
Hence, a rational number $x={k \over k+1}$ has two infinite OOCF expansions: $x=[\![(k+2, -1),(2,-1)^\infty]\!]$ and  $x=[\![(k, 1),(2,-1)^\infty]\!]$. Furthermore, for any $x\in T^{-n}(\{0\})$ with some $n\geq 2$, we can apply the iteration (\ref{iteration:1and2}) $n-1$ times, and then write $1-T^{n-1}(x)$ in two different ways. Therefore, any $x\in \bigcup_{n\geq 1} T^{-n}(\{0\})$ has two infinite OOCF expansions ending with $(2,-1)^\infty$.

%(PLEASE CHECK THEM!)

Let $x={2k-1 \over 2k+1} \in B(k+1,-1) \cap B(k,1)$ for some $k \ge 1$. Then $T(x) = 1 $ and
$$1-x=\cfrac{1}{(k+1)+\cfrac{-1}{2-(1-Tx)}}=\cfrac{1}{(k+1)+\cfrac{-1}{2}}, \quad \text{or} \quad 1-x=\cfrac{1}{k+\cfrac{1}{2-(1-Tx)}}=\cfrac{1}{k+\cfrac{1}{2}}.$$
Thus $x={2k-1 \over 2k+1}$ has two finite OOCF expansions: $x=[\![(k+1,-1)]\!]$ and $x=[\![(k,1)]\!]$. 
Furthermore, for any $x \in T^{-n} ( \{ 1\})$ with some $n \geq 1$ and $x \ne 1 $, we can apply the iteration (\ref{iteration:1and2}) $n-1$ times, and then write $1-T^{n-1}(x)$ in two different ways. 
Therefore, any $x\in \bigcup_{n\geq 0} T^{-n} (\{ 1 \} ) \setminus \{ 1\}$ has two finite OOCF expansions which differ at the last partial quotient.
 }

Note that for a rational $m/n$, the denominator of $\TO(m/n)$ is strictly less than $n$. 
Note also that $\TO$ sends a $1$-rational to a $1$-rational and an $\infty$-rational to an $\infty$-rational.
Thus if $m/n$ is a $1$-rational, then $\TO^N(m/n)=1$ for some $N\geq 1$, while if $m/n$ is a non-zero $\infty$-rational, then $\TO^N(m/n)=0$ for some $N\geq 1$. 
%
%Hence any $1$-rational belongs to 
%$$\TO^{-N}(1)= \TO^{-N+1} (\mathcal{U}), \quad \text{for some } N\geq 1$$  
%and any $\infty$-rational belongs to $$\TO^{-N}(0)= \TO^{-N+1} (\mathcal{V}) \quad \text{for some } N\geq 1.$$ Finally, we note that 
%\[
%\bigcup_{n= 0}^\infty \TO^{-n}(\mathcal{U} \cup \mathcal{V}\cup\{0\})=\bigcup_{n=1}^\infty\TO^{-n}(\{0,1\})=\bQ.
%\] 
%
Hence any $1$-rational and $\infty$-rational belongs to 
{ $\TO^{-N}(\{1\})$ and $\TO^{-N} (\{ 0 \})$ for some  $N\geq 1$ respectively. 
Finally, we note that 
\[
\bigcup_{n=1}^\infty\TO^{-n}(\{0,1\})=\bQ.
\] }
Then for any $x\in [0,1] \setminus \mathbb Q$, we can iterated \eqref{eq:iteration} infinitely and uniquely to get its OOCF expansion:
$$
x = 1 - \cfrac{1}{a_1 + \cfrac{\veps_1}{ 2 - \cfrac{1}{a_2 + \cfrac{\veps_2}{ 2 - \cfrac{1}{\ddots}}}}}.%{ 2 - \cfrac{1}{a_2 + \cfrac{\veps_2}{ 2 - \cfrac{1}{a_3 + \cfrac{\veps_3}{2- \ddots}}}}}}.
$$
where $a_n \in \mathbb N$ and $\veps_n \in \{1,-1\}$ for $a_n \ge 2$ and $\veps_n = 1$ for $a_n = 1$. 

We denote the OOCF expansion of $x$ by 
$$x=[\![(a_1,\veps_1),(a_2,\veps_2),\cdots,(a_n,\veps_n),\cdots]\!],$$ 
and call $(a_n,\veps_n)$ the \emph{$n$-th partial quotients} of $x$ in its OOCF expansion.
By the above discussion, we have the following proposition. 

\begin{proposition}\label{Thm:1-rat_quad}
The following properties hold.
\begin{enumerate}
%\item\label{Prop:1-rat:finite} \sout{Any finite OOCF is a $1$-rational.  {\color{red} put it later?}}
\item\label{Prop:1-rat:inftyrat} Any non-zero $\infty$-rational has exactly two infinite OOCF expansions ending with $(2,-1)^\infty$.
\item\label{Prop:1-rat_quad:1-rat=>finite} Each $1$-rational has exactly two finite OOCF expansions which differ only in the last partial quotient.
%Any finite OOCF is a $1$-rational. Every $1$-rational has exactly two OOCF expansions, and they are finite and only differ in the last digit. 
\item\label{Prop:1-rat:irr} Every irrational has a unique infinite OOCF expansion.
\end{enumerate}
\end{proposition}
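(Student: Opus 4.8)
The plan is to derive all three statements from one principle: the $n$-th partial quotient $(a_n,\veps_n)$ is forced by formula~\eqref{Eq:letter} according to which interval of the partition~\eqref{Eq:partition} contains $T^{n-1}(x)$, and a genuine choice appears exactly when $T^{n-1}(x)$ is an endpoint shared by two adjacent intervals. First I would record these shared endpoints together with their images under $T$. They come in two families: $\tfrac{2k-1}{2k+1}=B(k+1,-1)\cap B(k,1)$, with $T(\tfrac{2k-1}{2k+1})=1$, and $\tfrac{k}{k+1}=B(k,1)\cap B(k+2,-1)$, with $T(\tfrac{k}{k+1})=0$. A direct computation on the two branches in~\eqref{TOOCF} shows that these are precisely the points of $T^{-1}(\{0,1\})\setminus\{0,1\}$, and the displayed identities preceding the proposition show that at each of them~\eqref{Eq:letter} offers exactly two admissible labels, while at an interior point (lying in a single interval) it returns one. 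Consequently the OOCF expansions of $x$ are in bijection with the admissible labellings of its $T$-orbit, and any ambiguity is localized at the iterates that land on a shared endpoint.

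For part~\eqref{Prop:1-rat:irr} I would use that each branch of $T$ in~\eqref{TOOCF} is a M\"obius transformation with integer entries (indeed of determinant $1$), hence preserves $\bR\setminus\bQ$, whereas every shared endpoint is rational. Thus for irrational $x$ each iterate $T^{n-1}(x)$ is irrational, lies in the interior of a unique interval, and its label is forced; moreover the orbit never reaches the absorbing points $0$ or $1$, so the expansion is infinite. This yields the unique infinite expansion.

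For the two rational statements I would invoke the facts already established: the denominator strictly decreases under $T$, the map preserves the parity class, and hence a $1$-rational reaches $1$ while a non-zero $\infty$-rational reaches $0$ after finitely many steps (the degenerate fixed point $x=1$ being excluded, as in the discussion above). Let $N\ge 1$ be minimal with $T^N(x)\in\{0,1\}$. Since $0$ and $1$ are fixed by $T$ and are the images of the shared endpoints, any $T^{j}(x)$ with $j\le N-2$ that was a shared endpoint would force $T^{j+1}(x)\in\{0,1\}$, contradicting minimality; hence $T^{0}(x),\dots,T^{N-2}(x)$ are interior and only $T^{N-1}(x)$ can be ambiguous. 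On the other hand $T^{N-1}(x)\in T^{-1}(\{0,1\})\setminus\{0,1\}$, so it is a shared endpoint. Therefore the labels $(a_1,\veps_1),\dots,(a_{N-1},\veps_{N-1})$ are forced and only $(a_N,\veps_N)$ admits two values, producing \emph{exactly} two expansions differing solely in the $N$-th term. For a $1$-rational, $T^{N-1}(x)=\tfrac{2k-1}{2k+1}$ and the expansion terminates at the $N$-th term, giving~\eqref{Prop:1-rat_quad:1-rat=>finite}; for a non-zero $\infty$-rational, $T^{N-1}(x)=\tfrac{k}{k+1}$, after which $T^N(x)=0$ contributes the unique tail $(2,-1)^\infty$ computed before the proposition, giving~\eqref{Prop:1-rat:inftyrat}.

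I expect the main obstacle to be the bookkeeping that confines the ambiguity to the single iterate $T^{N-1}(x)$: one must simultaneously rule out an earlier endpoint (via minimality of $N$ together with the absorbing nature of $0$ and $1$) and confirm that $T^{N-1}(x)$ is a genuine shared endpoint rather than an interior point accidentally mapped to $0$ or $1$ — which cannot occur because $T^{-1}(\{0,1\})\setminus\{0,1\}$ equals the set of shared endpoints. Once this localization is secured, the count ``exactly two'' and the location of the differing term are immediate from~\eqref{Eq:letter}, and the finite/infinite dichotomy follows from whether the orbit is absorbed at $1$ or at $0$.
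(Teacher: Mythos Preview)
Your approach is correct and essentially the same as the paper's: the proposition is stated immediately after the discussion that derives it, and that discussion proceeds exactly as you do---identify the shared endpoints $\tfrac{2k-1}{2k+1}$ and $\tfrac{k}{k+1}$, observe that $T$ sends them to $1$ and $0$ respectively, use parity preservation and strict denominator decrease to show every rational orbit is absorbed at $0$ or $1$, and note that irrationals never meet an endpoint. Your minimality argument localizing the ambiguity to the single iterate $T^{N-1}(x)$ is more explicit than the paper's treatment, which simply asserts that one can iterate~\eqref{iteration:1and2} unambiguously $N-1$ times and then branch; one small slip to correct is that the branches of $T$ have determinant $\pm 1$, not always $+1$, though this does not affect your conclusion that $T$ preserves $\bR\setminus\bQ$.
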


To end this section, we remark that the two maps $\TO$ and $\TE$ are conjugate.
Define $f:[0,1]\to[0,1]$ by $f(x) := \dfrac{1-x}{1+x}.$
%\begin{theorem}\label{Thm:conj}
%More precisely, let $f$ be the function on the interval $[0,1]$ defined by $x\mapsto\frac{1-x}{1+x}$. 
Then the map $\TO$ is conjugate to $\TE$ via $f$, i.e., $$f\circ \TO\circ f^{-1}=\TE.$$
Schweiger \cite{Sch82} proved that $\TE$ admits an ergodic absolutely continuous invariant measure:
%ergodic invariant measure which is absolutely continuous with respect to the Lebesgue measure: 
$d\mu:=\dfrac{dx}{1-x^2}$. 
%Thus, by Theorem \ref{Thm:conj}, we conclude that 
Thus, the measure $f^{-1}_*\mu$ is an ergodic absolutely continuous invariant measure with respect to $T_\mathrm{OOCF}$.  
%By simple calculation, one can find that this measure $f^{-1}_*\mu$ is nothing but the one in Proposition \ref{Prop:invariant}.
%Since $f$ is a measure-preserving map between the two systems $\left((0,1),\frac{dx}{x}\right)$ and $\left((0,1),\frac{dx}{1-x^2}\right)$, $\TO$ is also ergodic.
%\end{remark}
%
%\lee{Note that the absolutely continuous invariant measure for $\TE$ is given by $\frac{dx}{1-x^2}$.}
%\lee{To find a density function of the absolutely continuous invariant measure,} 
Denote by $y = f(x)$. We have %Then $dy = | f'(x)|dx = \frac{2dx}{(1+x)^2}$, thus
$$
\frac{dx}{1-x^2} = \frac{(1+x)^2dy}{2(1-x^2)} = \frac{(1+x)dy}{2(1-x)} = \frac{dy}{2y}.
$$
Hence, we have the following conclusion. 
\begin{proposition}\label{Prop:invariant}
The map $T_\mathrm{OOCF} : [0,1] \to [0,1]$ preserves an infinite ergodic absolutely continuous invariant measure $\dfrac{1}{x}dx.$ 
\end{proposition}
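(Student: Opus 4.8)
The plan is to transport Schweiger's $\TE$-invariant measure $\mu=\frac{dx}{1-x^2}$ through the conjugacy $f\circ\TO\circ f^{-1}=\TE$ recorded above, and then read off its density in the new coordinate. Since $f(x)=\frac{1-x}{1+x}$ is a smooth involution of $[0,1]$ (so $f^{-1}=f$) restricting to an orientation-reversing diffeomorphism of $(0,1)$, it is a nonsingular bijection, and pushing a measure forward along it preserves both absolute continuity and ergodicity.

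First I would verify the transfer of invariance directly. Writing $\TO=f^{-1}\circ\TE\circ f$ and setting $\nu:=f^{-1}_*\mu$, the identity $f\circ\TO^{-1}=\TE^{-1}\circ f$ together with the $\TE$-invariance of $\mu$ gives, for every Borel set $A$,
$$(\TO)_*\nu(A)=\mu\bigl(f(\TO^{-1}A)\bigr)=\mu\bigl(\TE^{-1}(fA)\bigr)=\mu(fA)=\nu(A),$$
so $\nu$ is $\TO$-invariant; ergodicity passes from $\mu$ to $\nu$ because $f$ carries $\TO$-invariant sets bijectively to $\TE$-invariant sets while preserving null sets. Next I would compute $\nu$ explicitly: using $f^{-1}=f$ and the change of variables $y=f(x)$ already carried out, namely $\frac{dx}{1-x^2}=\frac{dy}{2y}$, one finds for every test function $g$ that $\int g\,d\nu=\int (g\circ f)\,d\mu=\int g(y)\,\frac{dy}{2y}$, hence $d\nu=\frac{1}{2x}\,dx$. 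Since invariance and ergodicity are insensitive to the positive scalar factor $2$, the measure $\frac1x\,dx$ is $\TO$-invariant, ergodic, and absolutely continuous; and $\int_0^1\frac1x\,dx=+\infty$ shows it is infinite.

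The only delicate point is the bookkeeping at the endpoints and the orientation reversal: both $\mu$ (near $x=1$) and the target density $\frac1x$ (near $x=0$) blow up at a cusp, so I would apply the change of variables on the open interval $(0,1)$ where $f$ is a genuine diffeomorphism and note that the two points $\{0,1\}$ carry no mass for either absolutely continuous measure, rendering them harmless. Everything else follows immediately from the conjugacy and Schweiger's theorem, both already established.
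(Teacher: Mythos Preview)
Your argument is correct and follows exactly the route the paper takes: push Schweiger's $\TE$-invariant measure $d\mu=\frac{dx}{1-x^2}$ through the conjugacy $f$, then perform the change of variables $y=f(x)$ to identify the density as $\frac{dy}{2y}$ and drop the harmless scalar. You have simply spelled out in more detail the functorial reasons why invariance, ergodicity, and absolute continuity survive the transfer, and flagged the endpoint issue; the paper leaves these as implicit.
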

We also remark that the infinity of the absolutely continuous invariant measure comes from the fact that the map $T_\mathrm{OOCF}$ has $0$ as an indifferent fixed point.

\section{Convergents of the odd-odd continued fraction algorithm}\label{Sec:OOCF}
For OOCF, we have three types of convergents by truncating the OOCF in three different places.
%In this section, we consider three forms of truncated continued fractions
%which give us three types of convergents. 
We will investigate the basic properties of such convergents of OOCF.
% and give geometrical interpretations of the hyperbolic plane.
%We need to define principal convergents of OOCF properly 
%We define principal convergents of OOCF as a truncated continued fraction form of an OOCF expansion. 

Let $x\in(0,1)$ be a real number such that
$$x=[\![(a_1,\veps_1),(a_2,\veps_2),\cdots, (a_n, \veps_n),\cdots]\!].$$
%\sout{From now on, when we write a rational number such as $p/q$, we always assume that $p \in \bZ$,  $\don{q} \in \bN$ and \don{$p$} and \don{$q$} are coprime.}
For $n\ge 1$, the \emph{$n$-th principal convergent} of OOCF is defined by
$$
\frac{p_n}{q_n}=[\![(a_1,\veps_1),(a_2,\veps_2),\cdots,(a_n,\veps_n)]\!]=
1 -\cfrac{1}{a_1 + \cfrac{\veps_1}{ 2 - \cfrac{1}{\ddots \cfrac{\veps_{n-1}}{ 2 - \cfrac{1}{ a_n + \cfrac{\veps_n}{2}}}}}}.
$$
We denote
$$
\frac{p'_n}{q'_n}:=1 -\cfrac{1}{a_1 + \cfrac{\veps_1}{ 2 - \cfrac{1}{\ddots \cfrac{\veps_{n-1}}{ 2 - \cfrac{1}{ a_n }}}}} 
\quad\quad
\text{and}
\quad \quad
\frac{p''_n}{q''_n}:=1 -\cfrac{1}{a_1 + \cfrac{\veps_1}{ 2 - \cfrac{1}{\ddots \cfrac{\veps_{n-1}}{ 2 - \cfrac{1}{ a_n + \varepsilon_n }}}}},
$$
and call them the \emph{$n$-th sub-convergent} and \emph{$n$-th pseudo-convergent}, respectively.

To study the convergents of a continued fraction, we have the following general lemma proved by induction (see \cite[p. 3]{Kra91} for details).

\begin{lemma}\label{Lem:generalrecursive}
Consider a general infinite continued fraction and its truncated continued fraction of the form
$$g_0 + \cfrac{e_0}{g_1 + \cfrac{e_1}{ g_2 + \cfrac{e_2}{g_3 + \ddots}}} \quad \quad \and \quad 
\frac{r_n}{s_n} := g_0 + \cfrac{e_0}{g_1 + \cfrac{e_1}{ g_2 + \cfrac{e_2}{\ddots + \cfrac{e_{n-1}}{g_n}}}},$$
where $g_n\in\bZ$ and $|e_n|=1$.
%Consider a general continued fraction of the form
%$$x = g_0 + \cfrac{e_0}{g_1 + \cfrac{e_1}{ g_2 + \cfrac{e_2}{g_3 + \ddots}}},$$
%where $g_n$ is an integer and $|e_n|=1$.
%Let $r_n/s_n$ be the finite continued fraction of the form
%$$ g_0 + \cfrac{e_0}{g_1 + \cfrac{e_1}{ g_2 + \cfrac{e_2}{\ddots + \cfrac{e_{n-1}}{g_n}}}}.$$
Then the following matrix relation holds:
\begin{equation*}\label{eq3}
\begin{pmatrix}r_n & e_n r_{n-1} \\ s_n & e_n s_{n-1} \end{pmatrix}=
\begin{pmatrix}g_0 & e_0 \\ 1 & 0 \end{pmatrix}
\begin{pmatrix}g_1 & e_1 \\ 1 & 0 \end{pmatrix}
\cdots
\begin{pmatrix}g_n & e_n \\ 1 & 0 \end{pmatrix}.
\end{equation*}
Consequently, we have the following recursive formulas:
\begin{equation*}\label{eq2}\left\{\begin{array}{l}
r_{n}=g_n r_{n-1}+e_{n-1} r_{n-2},\\
s_{n}=g_n s_{n-1}+e_{n-1} s_{n-2},
\end{array}\right.\end{equation*}
where $r_{-1}=1$, $s_{-1}=0$, $r_0=g_0$ and $s_0=1$. 
\end{lemma}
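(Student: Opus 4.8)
The plan is to prove the matrix identity directly by induction on $n$ and then read the recursive formulas off the bottom and top rows of the product. Throughout I would identify each factor
$$
M_i := \begin{pmatrix} g_i & e_i \\ 1 & 0 \end{pmatrix}
$$
with the M\"obius transformation $\phi_i(z) = g_i + e_i/z$, so that composition of the $\phi_i$ corresponds to multiplication of the $M_i$. The purpose of this correspondence is to connect the product matrix with the truncated continued fraction: one checks $\phi_n(\infty) = g_n$ and then, descending through the indices, that $\phi_k(\phi_{k+1}(\cdots \phi_n(\infty)))$ equals the tail $g_k + e_k/(g_{k+1} + \cdots)$. Hence $(\phi_0 \circ \cdots \circ \phi_n)(\infty) = r_n/s_n$, which shows that the first column of $M_0 \cdots M_n$ records the numerator and denominator of the $n$-th truncation.

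First I would fix the normalization by \emph{defining} $r_n$ and $s_n$ to be exactly the entries of the first column of $M_0 \cdots M_n$, rather than the value $r_n/s_n$ reduced to lowest terms; this is the convention under which the prescribed initial data $r_{-1}=1$, $s_{-1}=0$, $r_0=g_0$, $s_0=1$ hold. With this convention the base case is immediate, since $M_0 = \begin{pmatrix} g_0 & e_0 \\ 1 & 0\end{pmatrix} = \begin{pmatrix} r_0 & e_0 r_{-1} \\ s_0 & e_0 s_{-1}\end{pmatrix}$.

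For the inductive step I would assume
$$
M_0 \cdots M_{n-1} = \begin{pmatrix} r_{n-1} & e_{n-1} r_{n-2} \\ s_{n-1} & e_{n-1} s_{n-2}\end{pmatrix}
$$
and multiply on the right by $M_n$. Carrying out the $2\times 2$ product, the new second column comes out as $e_n$ times the old first column, giving entries $e_n r_{n-1}$ and $e_n s_{n-1}$, while the new first column has entries $g_n r_{n-1} + e_{n-1} r_{n-2}$ and $g_n s_{n-1} + e_{n-1} s_{n-2}$. Reading these off is exactly the asserted recursion, and the resulting matrix is precisely $\begin{pmatrix} r_n & e_n r_{n-1} \\ s_n & e_n s_{n-1}\end{pmatrix}$, closing the induction.

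I do not expect a genuine obstacle here; the content is entirely bookkeeping. The one point requiring care is the normalization just fixed: the matrix identity holds for the specific integer pair $(r_n,s_n)$ produced by the recursion, not for an arbitrary reduced representative of the rational value $r_n/s_n$, so one must track the actual matrix entries rather than merely the M\"obius transformation they represent. Once this convention is in place, the factorwise determinant $\det M_i = -e_i = \pm 1$ shows the product lies in $\mathrm{GL}_2(\bZ)$, which incidentally yields the cross-ratio relation $r_n s_{n-1} - r_{n-1} s_n = \pm 1$ useful in later sections; but this is not needed for the statement itself.
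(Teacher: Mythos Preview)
Your proposal is correct and matches the paper's approach: the paper does not give a proof but simply asserts that the lemma is ``proved by induction'' and cites \cite[p.~3]{Kra91} for details, which is exactly the argument you spell out. Your remark on normalizing $(r_n,s_n)$ as the actual matrix entries (rather than a reduced pair) is the one point of care, and you handle it properly.
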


Denote the inverse of $\TO|_{B(a_n,\veps_n)}$ by $f_{(a_n,\veps_n)}$. Then by \eqref{iteration:1and2}, we have 
\begin{equation}\label{Eq:inverse}
f_{(a_n,\veps_n)}(t) = 1 - \cfrac{1}{a_n + \cfrac{\veps_n}{1+t}}.
\end{equation}
%The fibered system:
%$$\{f_{(a,\veps)}:a\in\bZ,\veps=\pm1\}$$
%\begin{lemma}\label{Lem:x_between_anti_mid_convergents}
%Let $x=[\![(a_1,\veps_1),(a_2,\veps_2),\cdots]\!]$.
%Then, $x$ is between $p'_n/q'_n$ and $p''_n/q''_n$.
%\end{lemma}
%
%\begin{proof}
%Since $a_n=k+1$ if $\frac{k-1}{k} \le T^{n-1}_\textrm{OOCF}(x) \le \frac{2k -1}{2k+1}$ or $a_n=k$ if $\frac{2k -1}{2k+1} \le T^{n-1}_\textrm{OOCF}(x) \le \frac{k}{k+1}$, 
%we have that $1-T^{n-1}_\textrm{OOCF}(x)$ is between $\frac{1}{a_n}$ and $\frac{1}{a_n'}$.
%\end{proof}
The map $f_{(a_n,\veps_n)}$ corresponds to a linear fractional map on the upper half-plan $\mathbb{H}$ given by the matrix
\begin{equation}\label{Aae1}
A_{(a_n,\varepsilon_n)} :=
\begin{pmatrix}1 & -1 \\ 1 & 0 \end{pmatrix}
\begin{pmatrix}a_n & \veps_n \\ 1 & 0 \end{pmatrix}
\begin{pmatrix}1 & 1 \\ 0 & 1 \end{pmatrix}=
\begin{pmatrix}a_n-1 & a_n+\varepsilon_n-1 \\ a_n & a_n+\varepsilon_n \end{pmatrix}
\in \Theta\cup \begin{pmatrix} 0 & 1 \\ 1 & 0 \end{pmatrix} \Theta.
\end{equation}
%Note that $A_{(a_n,\varepsilon_n)}$ belongs to $\Theta\cup \begin{pmatrix} 0 & 1 \\ 1 & 0 \end{pmatrix} \Theta$. %With \eqref{Aae2} we obtain the following theorem. 
%\les{(Seulbee: We can combine \eqref{eq:recursivematrix} and \eqref{eq:recursivematrix2}. I am not sure it is better to leave them separately, or combine them or remove \eqref{eq:recursivematrix2} as the reviewer's comment.)}
By Lemma~\ref{Lem:generalrecursive}, we have
\begin{multline}\label{eq:recursivematrix}
A_{(a_1,\veps_1)}A_{(a_2,\veps_2)}\cdots A_{(a_n,\veps_n)} \begin{pmatrix} 1 & -1 \\ 1 & 0 \end{pmatrix} \\
=\begin{pmatrix} 1 & -1 \\ 1 & 0 \end{pmatrix}
\begin{pmatrix} a_1 & \veps_1 \\ 1 & 0 \end{pmatrix}
\begin{pmatrix} 2 & -1 \\ 1 & 0 \end{pmatrix}
\begin{pmatrix} a_2 & \veps_2 \\ 1 & 0 \end{pmatrix}
\cdots
\begin{pmatrix} 2 & -1 \\ 1 & 0 \end{pmatrix}
\begin{pmatrix} a_n & \veps_n \\ 1 & 0 \end{pmatrix}
\begin{pmatrix} 2 & -1 \\ 1 & 0 \end{pmatrix}
=
\begin{pmatrix} p_n & -p_n' \\ q_n & -q_n' \end{pmatrix}.
\end{multline}
and
\begin{multline}\label{eq:recursivematrix2}
 A_{(a_1,\veps_1)}A_{(a_2,\veps_2)}\cdots A_{(a_n,\veps_n)}\begin{pmatrix} 0 \\ 1 \end{pmatrix} \\%\begin{pmatrix} 1 & 1 \\ 0 & 1 \end{pmatrix}^{-1}\begin{pmatrix} 1 \\ 1 \end{pmatrix} \\
=\begin{pmatrix} 1 & -1 \\ 1 & 0 \end{pmatrix}
\begin{pmatrix} a_1 & \veps_1 \\ 1 & 0 \end{pmatrix}
\begin{pmatrix} 2 & -1 \\ 1 & 0 \end{pmatrix}
\begin{pmatrix} a_2 & \veps_2 \\ 1 & 0 \end{pmatrix}
\cdots
\begin{pmatrix} 2 & -1 \\ 1 & 0 \end{pmatrix}
\begin{pmatrix} a_n & \veps_n \\ 1 & 0 \end{pmatrix}
\begin{pmatrix} 1  \\ 1 \end{pmatrix}
=
\begin{pmatrix} p_n''  \\ q_n''  \end{pmatrix}.
\end{multline}
These mean that under the linear fractional map of the matrix $A_{(a_1,\veps_1)}A_{(a_2,\veps_2)}\cdots A_{(a_n,\veps_n)}$, the images of $1$, $\infty$ and $0$ are $p_n/q_n$, $p_n'/q_n'$ and $p_n''/q_n''$, respectively.
%Then, the image of $1$ under the matrix as in \eqref{eq:recursivematrix} is $p_n''/q_n''$.

Since $A_{(a_1,\veps_1)}A_{(a_2,\veps_2)}\cdots A_{(a_n,\veps_n)}$ is contained in $\Theta$ or $\begin{pmatrix}0&1\\1&0\end{pmatrix}\Theta$, 
we deduce the following proposition.
\begin{proposition}\label{Thm:parity_of_convergents}
We have
$$\frac{p_n}{q_n} \in \Theta(1) \quad\text{ and }\quad \frac{p'_n}{q'_n}, \ \frac{p''_n}{q''_n} \in \Theta(\infty).$$
\end{proposition}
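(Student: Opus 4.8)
The plan is to reduce everything modulo $2$ for the single matrix
$$M := A_{(a_1,\veps_1)}A_{(a_2,\veps_2)}\cdots A_{(a_n,\veps_n)},$$
which, as noted just before the statement, lies in $\Theta \cup S\Theta$ where $S := \begin{pmatrix}0&1\\1&0\end{pmatrix}$. If one wants to deduce this closure from \eqref{Aae1}, it is enough to check that $S$ normalises $\Theta$: conjugation by $S$ sends $\begin{pmatrix}a&b\\c&d\end{pmatrix}$ to $\begin{pmatrix}d&c\\b&a\end{pmatrix}$, which fixes both residues $\begin{pmatrix}1&0\\0&1\end{pmatrix}$ and $\begin{pmatrix}0&1\\1&0\end{pmatrix}$ modulo $2$; hence $\Theta\cup S\Theta$ is a group, and the product of the factors $A_{(a_i,\veps_i)}\in\Theta\cup S\Theta$ remains inside it.

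The key observation is that \emph{both} cosets reduce to the same set modulo $2$. Indeed $\Theta \bmod 2 = \{I,\ \begin{pmatrix}0&1\\1&0\end{pmatrix}\}$ by definition, while $S \equiv \begin{pmatrix}0&1\\1&0\end{pmatrix}\pmod 2$ is itself one of these residues; since left multiplication by $\begin{pmatrix}0&1\\1&0\end{pmatrix}$ merely interchanges $I$ and $\begin{pmatrix}0&1\\1&0\end{pmatrix}$, we also get $S\Theta \bmod 2 = \{I,\ \begin{pmatrix}0&1\\1&0\end{pmatrix}\}$. Thus in either case
$$M \equiv \begin{pmatrix}1&0\\0&1\end{pmatrix} \quad\text{or}\quad M \equiv \begin{pmatrix}0&1\\1&0\end{pmatrix} \pmod 2.$$

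Next I would read off the three convergents as images of column vectors. From \eqref{eq:recursivematrix} and \eqref{eq:recursivematrix2},
$$\begin{pmatrix}p_n\\q_n\end{pmatrix} = M\begin{pmatrix}1\\1\end{pmatrix},\qquad \begin{pmatrix}p'_n\\q'_n\end{pmatrix} = M\begin{pmatrix}1\\0\end{pmatrix},\qquad \begin{pmatrix}p''_n\\q''_n\end{pmatrix} = M\begin{pmatrix}0\\1\end{pmatrix}.$$
Reducing modulo $2$ and using the two possible residues of $M$, one checks directly that $\begin{pmatrix}p_n\\q_n\end{pmatrix}\equiv\begin{pmatrix}1\\1\end{pmatrix}$ in both cases, so $p_n$ and $q_n$ are both odd, whereas each of $\begin{pmatrix}p'_n\\q'_n\end{pmatrix}$ and $\begin{pmatrix}p''_n\\q''_n\end{pmatrix}$ is congruent to $\begin{pmatrix}1\\0\end{pmatrix}$ or $\begin{pmatrix}0\\1\end{pmatrix}$, so numerator and denominator have opposite parity. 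Finally I would invoke the parity characterisation of the two orbits recalled in the introduction—$p/q\in\Theta(1)$ exactly when $p,q$ are both odd, and $p/q\in\Theta(\infty)$ exactly when they have different parity—to conclude $p_n/q_n\in\Theta(1)$ and $p'_n/q'_n,\, p''_n/q''_n\in\Theta(\infty)$.

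The computation itself is elementary; the only point requiring care, and the one I would treat as the main obstacle, is the bookkeeping that makes the argument coset-independent, namely the coincidence of the two reductions $\Theta\equiv S\Theta$ modulo $2$. Once this is established the conclusion is insensitive to which coset $M$ falls in, which is precisely why all three convergents acquire well-defined parity types regardless of the signs $\veps_i$ and the digits $a_i$.
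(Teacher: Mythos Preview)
Your proof is correct and follows essentially the same line as the paper's. The paper's argument is the single sentence ``Since $A_{(a_1,\veps_1)}\cdots A_{(a_n,\veps_n)}$ is contained in $\Theta$ or $\begin{pmatrix}0&1\\1&0\end{pmatrix}\Theta$, we deduce the following proposition,'' together with the preceding remark that the images of $1$, $\infty$, $0$ under this matrix are $p_n/q_n$, $p'_n/q'_n$, $p''_n/q''_n$; you have simply unpacked the implicit mod~$2$ computation that makes this work and verified explicitly that the coset ambiguity does not affect the conclusion.
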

We remark that the name of odd-odd continued fraction comes from the fact that the principal convergents ${p_n}/{q_n}$ are $1$-rationals, i.e., of odd denominators and odd numerators.  We also remark that by Proposition \ref{Thm:parity_of_convergents}, any finite OOCF is a $1$-rational.

By \eqref{eq:recursivematrix} and \eqref{eq:recursivematrix2}, we have the following recursive relations of the three types of convergents.

\begin{lemma}\label{Lem:recursive}
Let $p_0'=1$, $q_0'=0$, $p_0=1$ and $q_0=1$.
We have the following recursive formulas:
\begin{equation}\label{Eq:p_n'p_n''p_n} \begin{cases}
p_{n}'=a_{n}p_{n-1}-p_{n-1}', \\
q_{n}'=a_{n}q_{n-1}-q_{n-1}',
\end{cases}
\
\begin{cases}
p_{n}''=p_{n}'+\veps_{n}p_{n-1}, \\
q_{n}''=q_{n}'+\veps_{n}q_{n-1}
\end{cases}
\text{ and } \
\begin{cases}
p_{n}=2p_{n}'+\veps_{n}p_{n-1}, \\
q_{n}=2q_{n}'+\veps_{n}q_{n-1}
\end{cases}
\quad\text{ for }n\ge 1.\end{equation}
%By the above recursive formulas, we further have
Further,
\begin{equation}\label{Eq:p_n'+p_n''}
\begin{cases}
p_n=p_n'+p_n'', \\
q_n=q_n'+q_n''
\end{cases}
\text{ and }\quad
\begin{cases}
p_{n-1} = \veps_n(p_n''-p_n'), \\
q_{n-1} = \veps_n(q_n''-q_n')
\end{cases}
\quad \text{ for }n\ge 1.
\end{equation}
%\end{lemma}
%\begin{proof}
%The lemma is obvious if we plug $\mathfrak a_{0}=1$, $\mathfrak b_0=-1$, $\mathfrak a_{2n}=2$, $\mathfrak a_{2n-1}=a_n$, $\mathfrak b_{2n}=-1$, $\mathfrak b_{2n-1}=\veps_n$ %for $n\in\bN$
%in the general continued fraction form in Lemma~\ref{Lem:generalrecursive}.
%\end{proof}
Moreover, letting $p_{-1}=-1, q_{-1}=1$ and $\veps_0=1$, we have the recursive formulas for the principal convergents
%The recursive formulas for the principle convergents are given by the following lemma.
%\begin{lemma}We have 
\begin{equation}\label{Eq:recursivePC} \begin{cases}
p_n=(2a_n+\veps_n-1)p_{n-1}+\veps_{n-1}p_{n-2}, \\
q_n=(2a_n+\veps_n-1)q_{n-1}+\veps_{n-1}q_{n-2}
\end{cases}
\quad \text{ for }n\ge 1.
\end{equation}
%where $p_{-1}/q_{-1}=-1/1$, $p_0/q_0=1/1$ and .
\end{lemma}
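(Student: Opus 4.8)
The plan is to extract every identity from the two matrix relations \eqref{eq:recursivematrix} and \eqref{eq:recursivematrix2} together with the one-step factorization of the accumulated product. Write $M_n := A_{(a_1,\veps_1)}\cdots A_{(a_n,\veps_n)}$, with $M_0$ the identity matrix. Reading \eqref{eq:recursivematrix} column by column shows that the first column of $M_n$ is $(p_n',q_n')^{\mathsf T}$ and that $M_n(1,1)^{\mathsf T}=(p_n,q_n)^{\mathsf T}$, while \eqref{eq:recursivematrix2} shows the second column is $(p_n'',q_n'')^{\mathsf T}$. Thus
\[
M_n = \begin{pmatrix} p_n' & p_n'' \\ q_n' & q_n'' \end{pmatrix},
\qquad
\begin{pmatrix} p_n \\ q_n \end{pmatrix} = M_n \begin{pmatrix} 1 \\ 1 \end{pmatrix},
\]
and the second equality already gives $p_n = p_n' + p_n''$, $q_n = q_n' + q_n''$, i.e.\ the first half of \eqref{Eq:p_n'+p_n''}. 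The stated initial data $p_0'=1$, $q_0'=0$, $p_0=q_0=1$ (with $p_0''=0$, $q_0''=1$) are simply the entries of $M_0 = I$.

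The three recursions in \eqref{Eq:p_n'p_n''p_n} then follow from the single relation $M_n = M_{n-1}A_{(a_n,\veps_n)}$ and the explicit form of $A_{(a_n,\veps_n)}$ in \eqref{Aae1}; no induction is needed, since the identification of both $M_{n-1}$ and $M_n$ is already furnished by \eqref{eq:recursivematrix}--\eqref{eq:recursivematrix2}. Multiplying out the first column gives $p_n' = (a_n-1)p_{n-1}' + a_n p_{n-1}''$, and substituting $p_{n-1}'' = p_{n-1} - p_{n-1}'$ collapses this to $p_n' = a_n p_{n-1} - p_{n-1}'$. Expanding the second column and regrouping it as the first-column expression plus $\veps_n(p_{n-1}' + p_{n-1}'')$ yields $p_n'' = p_n' + \veps_n p_{n-1}$; adding $p_n = p_n' + p_n''$ then gives $p_n = 2p_n' + \veps_n p_{n-1}$. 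The $q$-identities are the corresponding entries of the same product and require no separate argument.

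The remaining two claims are short algebraic consequences. The relation $p_n'' - p_n' = \veps_n p_{n-1}$ just obtained, combined with $\veps_n^2 = 1$, immediately gives $p_{n-1} = \veps_n(p_n'' - p_n')$ and its $q$-analogue, completing \eqref{Eq:p_n'+p_n''}. For the closed recursion \eqref{Eq:recursivePC} I would eliminate the auxiliary quantities: substitute $p_n' = a_n p_{n-1} - p_{n-1}'$ into $p_n = 2p_n' + \veps_n p_{n-1}$, and then use the $(n-1)$-instance of the third recursion in the form $2p_{n-1}' = p_{n-1} - \veps_{n-1} p_{n-2}$ to remove $p_{n-1}'$; the terms reorganize exactly into $p_n = (2a_n + \veps_n - 1)p_{n-1} + \veps_{n-1}p_{n-2}$, and likewise for $q_n$.

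The only delicate point is the bookkeeping of the initial conditions rather than any real obstacle. The closed recursion \eqref{Eq:recursivePC} is asserted already at $n=1$, which forces the nonstandard conventions $p_{-1} = -1$, $q_{-1} = 1$, $\veps_0 = 1$. I would justify these by computing $p_1/q_1 = [\![(a_1,\veps_1)]\!] = (2a_1+\veps_1-2)/(2a_1+\veps_1)$ directly and checking that $(2a_1+\veps_1-1)p_0 + \veps_0 p_{-1}$ and $(2a_1+\veps_1-1)q_0 + \veps_0 q_{-1}$ reproduce $p_1$ and $q_1$; equivalently, these conventions are exactly what make the third recursion $p_0 = 2p_0' + \veps_0 p_{-1}$ consistent with $M_0 = I$. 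Since every step is one finite matrix multiplication or one linear substitution, the whole lemma reduces to this verification plus the routine expansions above.
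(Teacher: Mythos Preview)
Your argument is correct and follows the same route the paper indicates: the lemma is stated as an immediate consequence of the matrix identities \eqref{eq:recursivematrix} and \eqref{eq:recursivematrix2}, and you have carried out exactly that derivation by reading off the columns of $M_n$, using $M_n=M_{n-1}A_{(a_n,\veps_n)}$, and then eliminating the auxiliary quantities. The paper provides no further details beyond citing these two equations, so your write-up is a faithful expansion of the intended proof.
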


Note that $2a_{n}+\veps_{n}-1\ge 2$ for all $n\in\bN$. 
Since $q_1\ge 2q_0+\veps_0q_{-1}=3>q_0$, by \eqref{Eq:recursivePC}, we have 
\begin{align}\label{mono-qn}
q_{n+1}> q_n, \quad \forall n\in \bN.  %the denominators of principal convergents are increasing.
\end{align}
By \eqref{eq:recursivematrix} and the second assertion of \eqref{Eq:p_n'+p_n''}, we have
\begin{equation*}\label{eq:p_n-1}
A_{(a_1,\veps_1)}A_{(a_2,\veps_2)}\cdots A_{(a_n,\veps_n)} \begin{pmatrix} -1 & 1 \\ 1 & 1 \end{pmatrix} = \begin{pmatrix} \veps_np_{n-1} & p_n \\ \veps_nq_{n-1} & q_n \end{pmatrix} \quad \text{ for all }n\in\bN.
\end{equation*}
Thus, by noting $\text{det}(A_{(a_i,\veps_i)}) =\veps_i$, we have 
\begin{equation}\label{eq:det}
p_{n-1}q_n-p_nq_{n-1}=-2\veps_1\cdots \veps_{n-1} \quad \text{ for all }n\in\bN.
\end{equation}

Denote by
$$\zeta_n :=\TO^{n-1}(x)= 1- \cfrac{1}{a_n+ \cfrac{\veps_n}{2-\cfrac{1}{a_{n+1}+\cfrac{\veps_{n+1}}{\ddots}}}}.$$
We can show inductively that
\begin{equation}\label{Eq:completequotient}x=\cfrac{p_{n}''+p_{n}'\zeta_{n+1}}{q_{n}''+q_{n}'\zeta_{n+1}}  \quad \text{ for all }n\in\bN.\end{equation}
%Since 
%\begin{align*}
%\left\{\begin{array}{lll}
%p_{n-1}''+p_{n-1}'=p_{n-1}, \\
%p_{n-1}''-p_{n-1}'=\veps_{n-1}p_{n-2},
%\end{array}\right.
%\quad\text{and}\quad
%\left\{\begin{array}{lll}
%q_{n-1}''+q_{n-1}'=q_{n-1}, \\
%q_{n-1}''-q_{n-1}'=\veps_{n-1}q_{n-2},
%\end{array}\right.
%\end{align*}
By \eqref{Eq:p_n'+p_n''}, we have
\begin{equation*}\begin{cases}
p_{n}''=\frac{1}{2}(p_{n}+\veps_{n}p_{n-1}), \\
p_{n}'=\frac{1}{2}(p_{n}-\veps_{n}p_{n-1}),
\end{cases}
\quad\text{and}\quad\quad
\begin{cases}
q_{n}''=\frac{1}{2}(q_{n}+\veps_{n}q_{n-1}), \\
q_{n}'=\frac{1}{2}(q_{n}-\veps_{n}q_{n-1}),
\end{cases}
\quad \text{ for all }n\in\bN.
\end{equation*}
Thus,
\begin{equation}\label{Eq:completequotient2}
x=\frac{p_{n}(1+\zeta_{n+1})+\veps_{n}p_{n-1}(1-\zeta_{n+1})}{q_{n}(1+\zeta_{n+1})+\veps_{n}q_{n-1}(1-\zeta_{n+1})}.
\end{equation}
The following theorem gives the convergence of our OOCF.
\begin{theorem}\label{Thm:convergence}
For all $x\in[0,1]$, the OOCF expansion of $x$ converges to $x$.
\end{theorem}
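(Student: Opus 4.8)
The plan is to avoid any separate convergence machinery and instead read off an explicit error bound directly from the complete-quotient identity \eqref{Eq:completequotient2}, in which $x$ is expressed through $p_n,q_n,p_{n-1},q_{n-1}$ and the tail $\zeta_{n+1}=\TO^{n}(x)\in[0,1]$. First I would subtract $p_n/q_n$ from \eqref{Eq:completequotient2} and place everything over the common denominator
\[
D_n := q_n(1+\zeta_{n+1}) + \veps_n q_{n-1}(1-\zeta_{n+1}).
\]
The terms $q_n p_n(1+\zeta_{n+1})$ cancel, and the numerator collapses to $\veps_n(1-\zeta_{n+1})\,(p_{n-1}q_n-p_n q_{n-1})$. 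Invoking the determinant identity \eqref{eq:det}, namely $p_{n-1}q_n-p_n q_{n-1}=-2\veps_1\cdots\veps_{n-1}$, this becomes $-2\veps_1\cdots\veps_n(1-\zeta_{n+1})$, giving the clean formula
\[
x - \frac{p_n}{q_n} = \frac{-2\,\veps_1\cdots\veps_n\,(1-\zeta_{n+1})}{q_n D_n},
\qquad\text{hence}\qquad
\left|x - \frac{p_n}{q_n}\right| = \frac{2(1-\zeta_{n+1})}{q_n\, |D_n|}.
\]

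The step I expect to require the most care is a uniform positive lower bound for $D_n$, since the numerator is already under control: from $\zeta_{n+1}\in[0,1]$ we get $0\le 2(1-\zeta_{n+1})\le 2$. For the denominator I would bound the two summands separately, using $q_n(1+\zeta_{n+1})\ge q_n$ (as $\zeta_{n+1}\ge 0$) and $\veps_n q_{n-1}(1-\zeta_{n+1})\ge -q_{n-1}(1-\zeta_{n+1})\ge -q_{n-1}$ (as $1-\zeta_{n+1}\le 1$), which yields
\[
D_n \ge q_n(1+\zeta_{n+1}) - q_{n-1}(1-\zeta_{n+1}) \ge q_n - q_{n-1} \ge 1,
\]
where the final inequality is the strict monotonicity \eqref{mono-qn} of the integer denominators. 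This handles both signs of $\veps_n$ at once (the tightest case being $\veps_n=-1$, $\zeta_{n+1}=0$), and in particular shows $D_n>0$, so the sign bookkeeping in the error formula is legitimate and $|D_n|=D_n$.

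Combining the two bounds gives $\left|x-p_n/q_n\right|\le 2/q_n$. To finish I would note that \eqref{mono-qn} exhibits $(q_n)$ as a strictly increasing sequence of positive integers, so $q_n\to\infty$ and therefore $p_n/q_n\to x$. It remains only to dispose of the degenerate cases: if $x$ is a $1$-rational its OOCF expansion is finite and the last principal convergent equals $x$ exactly, so convergence is immediate; for every irrational $x$ and for the (infinite) expansions of $\infty$-rationals, the estimate above applies verbatim, completing the proof.
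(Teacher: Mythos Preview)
Your argument is correct and is essentially the paper's own proof. The paper makes the cosmetic substitution $\xi_{n+1}=\frac{1-\zeta_{n+1}}{1+\zeta_{n+1}}$, which turns your $D_n$ into $(1+\zeta_{n+1})(q_n+\veps_n q_{n-1}\xi_{n+1})$ and your error formula into $\dfrac{-2\xi_{n+1}\veps_1\cdots\veps_n}{q_n(q_n+\veps_n q_{n-1}\xi_{n+1})}$, after which the same bound via \eqref{mono-qn} gives $|x-p_n/q_n|<2/q_n$ and the conclusion follows; your lower bound $D_n\ge q_n-q_{n-1}\ge 1$ is exactly the content of the paper's appeal to $|\xi_{n+1}|\le1$ and $q_{n}>q_{n-1}$.
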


\begin{proof}
Let $\xi_n = {1-\zeta_n \over 1+ \zeta_n}$.
By \eqref{Eq:completequotient2} and \eqref{eq:det}, we have
\begin{equation}\label{Eq:pfconv}x - \frac{p_n}{q_n}  
= \frac{\xi_{n+1}\veps_n(p_{n-1}q_n-q_{n-1}p_n)}{q_n(q_{n}+\veps_{n}q_{n-1}\xi_{n+1})} =\frac{-2\xi_{n+1}\veps_1\cdots\veps_n}{q_n(q_{n}+\veps_{n}q_{n-1}\xi_{n+1})}.\end{equation}
%$$x = \frac{p_n+\veps_np_{n-1}\zeta_{n+1}}{q_n+\veps_nq_{n-1}\zeta_{n+1}}.$$
Since $|\veps_{n}|=1$ and $|\xi_{n+1}|\leq 1$,  by \eqref{mono-qn}, we have % by \eqref{eq:det},
$$
\left|x - \frac{p_n}{q_n}\right|<\frac{2}{q_n}\quad \forr \ \all \ n\in\bN.$$
Again by \eqref{mono-qn}, %By \eqref{eq:det}, \eqref{mono-qn}, ... {\color{red} rewrite the proof, even the review's proof is more clear.}
$q_n\to \infty$ as $n\to \infty$, which concludes $p_n/q_n\to x$ as $n\to \infty$.
%Let $x$ be a number whose OOCF expansion is infinite.
%By Lemma~\ref{Lem:relation_of_convergents} (\ref{Lem:rel_conv_3}), the intervals $[p_n/q_n, p''_n/q''_n]$ are shrinking. Thus by Lemma~\ref{Lem:xbtw}, 
%%$$\left|\dfrac{p_n}{q_n}-\dfrac{p_n''}{q_n''}\right|\rightarrow 0 \quad\text{ as } n\rightarrow \infty.$$
%\begin{equation*} 
%\left|x-\frac{p_n}{q_n}\right|<\left|\dfrac{p_n}{q_n}-\dfrac{p_n''}{q_n''}\right| \to 0 \quad \text{as}\quad n\to\infty.
%\qedhere
%\end{equation*}
\end{proof}

\begin{lemma}\label{Lem:xbtw}
For all $x\in(0,1)$,  $x$ is between $p_n/q_n$ and $p''_n/q''_n$.
\end{lemma}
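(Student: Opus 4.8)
The plan is to realize $x$ as the image of the complete quotient $\zeta_{n+1}=T^{n}(x)\in[0,1]$ under the Möbius transformation carried by the convergent data, and then to show that this transformation is continuous and monotone on $[0,1]$ with the endpoints $0$ and $1$ mapping precisely to $p''_n/q''_n$ and $p_n/q_n$.

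First I would record, from \eqref{eq:recursivematrix} and \eqref{eq:recursivematrix2}, that the product matrix is
$M_n:=A_{(a_1,\veps_1)}\cdots A_{(a_n,\veps_n)}=\begin{pmatrix} p'_n & p''_n \\ q'_n & q''_n \end{pmatrix}$,
so that $\det M_n=\veps_1\cdots\veps_n=\pm1$ and the associated linear fractional map is $\phi(t)=\dfrac{p'_n t+p''_n}{q'_n t+q''_n}$. Equation \eqref{Eq:completequotient} reads exactly $x=\phi(\zeta_{n+1})$, and since $T$ maps $[0,1]$ into itself we have $\zeta_{n+1}\in[0,1]$. A direct substitution gives $\phi(0)=p''_n/q''_n$, while $\phi(1)=\dfrac{p'_n+p''_n}{q'_n+q''_n}=\dfrac{p_n}{q_n}$ by the first identity in \eqref{Eq:p_n'+p_n''}. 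Thus the two endpoints $0$ and $1$ are sent to the pseudo-convergent and the principal convergent.

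Next I would argue that $\phi$ is continuous and strictly monotone on $[0,1]$. Because $\det M_n\neq 0$, the map $\phi$ is a genuine Möbius transformation whose only real singularity is the pole $t=-q''_n/q'_n$, so it suffices to check that this pole lies outside $[0,1]$; for this I claim $q'_n>0$ and $q''_n>0$ for every $n\geq1$. This I would prove by induction using \eqref{Eq:p_n'p_n''p_n}: the base case $n=1$ gives $q'_1=a_1\geq1$ and $q''_1=a_1+\veps_1\geq1$; for the inductive step, $q_n=q'_n+q''_n$ forces $q'_n<q_n$, whence $q'_{n+1}=a_{n+1}q_n-q'_n>(a_{n+1}-1)q_n\geq0$ and $q''_{n+1}=(a_{n+1}+\veps_{n+1})q_n-q'_n\geq q_n-q'_n>0$, using $a_{n+1}+\veps_{n+1}\geq1$ together with \eqref{mono-qn}. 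Consequently the pole $-q''_n/q'_n$ is negative, so $\phi$ is strictly monotone on $[0,\infty)\supset[0,1]$.

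Finally, monotonicity and continuity give $\phi([0,1])=[\,\phi(0),\phi(1)\,]$ or $[\,\phi(1),\phi(0)\,]$; since $\zeta_{n+1}\in[0,1]$, the value $x=\phi(\zeta_{n+1})$ lies in this closed interval, that is, between $p''_n/q''_n$ and $p_n/q_n$, as claimed. The only genuinely technical point — and hence the step I expect to be the main obstacle — is the positivity $q'_n,q''_n>0$ that locates the pole off $[0,1]$; once this is in hand, everything else is bookkeeping with the matrix identities already established.
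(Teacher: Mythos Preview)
Your proof is correct but takes a genuinely different route from the paper's. The paper computes $x-p''_n/q''_n$ directly from \eqref{Eq:completequotient} together with the determinant identity $p'_nq''_n-p''_nq'_n=\veps_1\cdots\veps_n$, obtaining
\[
x-\frac{p''_n}{q''_n}=\frac{\zeta_{n+1}\,\veps_1\cdots\veps_n}{q''_n(q''_n+q'_n\zeta_{n+1})},
\]
and then simply compares signs with the formula \eqref{Eq:pfconv} for $x-p_n/q_n$ already obtained in the proof of Theorem~\ref{Thm:convergence}; the two numerators carry opposite signs, so $x$ lies between the two convergents. Your argument instead packages everything into the monotonicity of the single M\"obius map $\phi(t)=(p'_nt+p''_n)/(q'_nt+q''_n)$ on $[0,1]$, which is conceptually cleaner and self-contained (it does not lean on the convergence theorem). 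You also make explicit, and prove by induction, the positivity $q'_n,q''_n>0$ that the paper's sign comparison uses only tacitly when asserting that the denominators $q''_n(q''_n+q'_n\zeta_{n+1})$ and $q_n(q_n+\veps_nq_{n-1}\xi_{n+1})$ are positive. One small remark: your appeal to \eqref{mono-qn} in the inductive step is unnecessary, since $q_n>q'_n$ already follows from $q_n=q'_n+q''_n$ and the inductive hypothesis $q''_n>0$.
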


%{\color{red}($\downarrow$ Seulbee: I wrote the following proof of the Lemma according to the reviewer's comment no. 26 and 34. Actually the reviewer commented that the lemma does not need proof by (3.3) but I think we need the following arguments. Is it ok?)} {\color{blue} Tell the reviewer that for the reader's convenience, we add the short proof. {$\rightarrow$ \color{red}Lee: Ok. I have deleted the previous proof.}}
\begin{proof}
The equations \eqref{eq:recursivematrix} and \eqref{eq:recursivematrix2} imply that $p_n'q_n''-p_n''q_n' = \veps_1\cdots \veps_n.$
Thus, by \eqref{Eq:completequotient},
$$x-\frac{p_n''}{q_n''} = \frac{\zeta_{n+1}(p_n'q_n''-p_n''q_n')}{q_n''(q_n''+q_n'\zeta_{n+1})} = \frac{\zeta_{n+1}\veps_1\cdots\veps_n}{q_n''(q_n''+q_n'\zeta_{n+1})}, $$
which means that $x-p_n/q_n$ and $x-p''_n/q''_n$ has opposite signs by comparing with \eqref{Eq:pfconv}.
\end{proof}
%{\color{red}($\downarrow$ Seulbee: This is the previous proof of Lemma 3.5. Which proof is better?)} {\color{blue} We can follow the reviewer's proof. Please delete this proof.}
%\begin{proof}
%%For $n\ge 1$, if $\frac{k-1}{k} \le T^{n-1}_\textrm{OOCF}(x) \le \frac{2k -1}{2k+1}$, then $a_n=k+1$ and $\veps_n=-1$ and
%%if $\frac{2k -1}{2k+1} \le T^{n-1}_\textrm{OOCF}(x) \le \frac{k}{k+1}$, then $a_n=k$ and $\veps_n=1$.
%By (\ref{Eq:letter}), we deduce that $1-T^{n-1}_\textrm{OOCF}(x)$ is between $\frac{1}{a_n+(\veps_n/2)}$ and $\frac{1}{a_n+\varepsilon_n}$.
%%For each OOCF digit $(a,\veps)$, let $f_{(a,\veps)}$ be the inverse of the restriction of $\TO$ to the cell corresponds to 
%%Since $f_{(a,\veps)}$ is a composition of translations and inversions, $f_{(a,\veps)}$ is monotone.
%Let $g = f_{(a_1,\veps_1)}\circ\cdots\circ f_{(a_n,\veps_n)}$.
%Since $g$ is monotone, $g$ does not change the relative positions of points.
%Thus $x=g(1-\TO^{n-1}(x)) $ is between $g(\frac{1}{a_n+(\veps_n/2)}) = p_n/q_n$ and $g(\frac{1}{a_n+\veps_n}) = p_n''/q_n''$.
%%\sout{$x$ is between $p_n/q_n$ and $p_n''/q_n''$.}
%\end{proof}

\medskip
With the above preparations, we can now show the following lemma.
\begin{lemma}\label{Lem:relation_of_convergents}
The following statements hold.
\begin{enumerate}
\item\label{Lem:rel_conv_1} The $n$-th principal convergent $p_{n}/q_{n}$ is between $p'_n/q'_n$ and $p''_n/q''_n$.
\item\label{Lem:rel_conv_2} The $(n-1)$-th principal convergent $p_{n-1}/q_{n-1}$ is not between $p'_{n}/q'_{n}$ and $p''_{n}/q''_{n}$. 
\item\label{Lem:rel_conv_3} The three distinct convergents $p_n/q_n$, $p_{n}'/q_{n}'$ and $p_{n}''/q_{n}''$ are in the half closed interval $I_{n-1}$ of endpoints $p_{n-1}/q_{n-1}$ and $p_{n-1}''/q_{n-1}''$ which contains $p_{n-1}''/q_{n-1}''$ but does not contain $p_{n-1}/q_{n-1}$. % and at least one of $p_{n+1}'/q_{n+1}'$ and $p_{n+1}''/q_{n+1}''$ is not $p_n'/q_n'$ and $p_n''/q_n''$.
\end{enumerate}
\end{lemma}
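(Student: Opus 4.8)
The plan is to establish the three claims in order, using the recursive formulas in Lemma~\ref{Lem:recursive} together with the sign information already extracted in Lemma~\ref{Lem:xbtw}. For part~\eqref{Lem:rel_conv_1}, I would compute the two differences $p_n/q_n - p_n'/q_n'$ and $p_n/q_n - p_n''/q_n''$ and show they have opposite signs. Using \eqref{Eq:p_n'p_n''p_n}, namely $p_n = 2p_n' + \veps_n p_{n-1}$ and $q_n = 2q_n' + \veps_n q_{n-1}$, together with the companion relation $p_n = p_n' + p_n''$, $q_n = q_n' + q_n''$ from \eqref{Eq:p_n'+p_n''}, each of these differences reduces to a single determinant of the form $p_n' q_n'' - p_n'' q_n'$ or $p_{n-1}q_n' - p_n' q_{n-1}$ divided by a product of denominators. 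Since $p_n'q_n'' - p_n''q_n' = \veps_1\cdots\veps_n$ (computed inside the proof of Lemma~\ref{Lem:xbtw}), both numerators are $\pm 1$ up to the sign $\veps_1\cdots\veps_n$, and the key point is to check that the two denominators' product carries opposite signs, which follows because $p_n/q_n$ sits strictly between its two neighbours precisely when $2p_n'$ and $\veps_n p_{n-1}$ pull in opposite directions.

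For part~\eqref{Lem:rel_conv_2}, I would similarly compare $p_{n-1}/q_{n-1}$ with the interval endpoints $p_n'/q_n'$ and $p_n''/q_n''$. The relation $p_{n-1} = \veps_n(p_n'' - p_n')$, $q_{n-1} = \veps_n(q_n'' - q_n')$ from \eqref{Eq:p_n'+p_n''} is the natural tool: it says $p_{n-1}/q_{n-1}$ is obtained from the endpoints by a \emph{difference} rather than a convex combination, so it should land outside the segment. Concretely I would form $p_n'/q_n' - p_{n-1}/q_{n-1}$ and $p_n''/q_n'' - p_{n-1}/q_{n-1}$ and show both have the \emph{same} sign, again by reducing each to a determinant expression and using \eqref{eq:det}, $p_{n-1}q_n - p_n q_{n-1} = -2\veps_1\cdots\veps_{n-1}$, to pin down the signs.

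Part~\eqref{Lem:rel_conv_3} is the synthesis: I want to locate all three new convergents inside the half-open interval $I_{n-1}$ with endpoints $p_{n-1}/q_{n-1}$ and $p_{n-1}''/q_{n-1}''$. The cleanest route is induction on $n$. The inductive hypothesis from Lemma~\ref{Lem:xbtw} tells me $x$ lies between $p_{n-1}/q_{n-1}$ and $p_{n-1}''/q_{n-1}''$, and \eqref{Eq:completequotient} expresses $x$ as a M\"obius image of $\zeta_n \in [0,1]$ under the matrix with columns $(p_{n-1}'', q_{n-1}'')$ and $(p_{n-1}', q_{n-1}')$; since the three new points $p_n/q_n, p_n'/q_n', p_n''/q_n''$ are the images of $1, \infty, 0$ under the next block of the same product \eqref{eq:recursivematrix}, they are all images of points of $[0,1]$ under that M\"obius map and hence all lie on the same side, namely the side containing $p_{n-1}''/q_{n-1}''$ (the image of $0$) but excluding $p_{n-1}/q_{n-1}$. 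I would make this precise by checking that the relevant M\"obius map is monotone on $[0,1]$ with the correct orientation, so that the image interval is exactly $I_{n-1}$.

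The main obstacle I anticipate is bookkeeping the signs uniformly across the two cases $\veps_n = +1$ and $\veps_n = -1$ (and the degenerate case $a_n = 1$, forcing $\veps_n = 1$). Each determinant computation produces a factor $\veps_1\cdots\veps_n$, and the orientation of the interval $I_{n-1}$ flips with the parity of the number of $-1$'s seen so far; the challenge is to phrase the half-openness of $I_{n-1}$ so that it is preserved under the inductive step regardless of this flip. I expect the relations $p_n = p_n' + p_n''$ and $p_{n-1} = \veps_n(p_n'' - p_n')$ to absorb exactly this sign ambiguity, so that the geometric statement about ``which endpoint is included'' becomes a clean consequence of the M\"obius-image picture rather than a case analysis.
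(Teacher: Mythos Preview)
Your plan is essentially sound, but it takes a markedly longer route than the paper's own proof, and the extra length is precisely the sign-bookkeeping you flag at the end. The paper dispatches \eqref{Lem:rel_conv_1} and \eqref{Lem:rel_conv_2} in one line using only the mediant inequality: since $(p_n,q_n)=(p_n',q_n')+(p_n'',q_n'')$ with both $q_n',q_n''>0$, the fraction $p_n/q_n$ is literally the Farey mediant and hence lies between its parents; and since $(p_n'',q_n'')=(p_n',q_n')+\veps_n(p_{n-1},q_{n-1})$ (or equivalently $(p_n',q_n')=(p_n'',q_n'')+(-\veps_n)(p_{n-1},q_{n-1})$, whichever makes the second summand's denominator positive), one of $p_n'/q_n',p_n''/q_n''$ is the mediant of the other with $p_{n-1}/q_{n-1}$, forcing $p_{n-1}/q_{n-1}$ outside. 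No determinants, no $\veps_1\cdots\veps_n$. For \eqref{Lem:rel_conv_3} the paper again stays elementary: it reads off directly from Lemma~\ref{Lem:recursive} that
\[
(p_n',q_n')=(a_n-1)(p_{n-1},q_{n-1})+(p_{n-1}'',q_{n-1}''),\qquad
(p_n'',q_n'')=(a_n-1+\veps_n)(p_{n-1},q_{n-1})+(p_{n-1}'',q_{n-1}''),
\]
with $a_n-1\ge 0$ and $a_n-1+\veps_n\ge 0$, so both are weighted mediants of $p_{n-1}/q_{n-1}$ and $p_{n-1}''/q_{n-1}''$ with a strictly positive weight on the latter---hence lie in $I_{n-1}$; part \eqref{Lem:rel_conv_1} then places $p_n/q_n$ there too.

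Your determinant computations for \eqref{Lem:rel_conv_1}--\eqref{Lem:rel_conv_2} would succeed, but note a slip: the two numerators you obtain are $\mp\veps_1\cdots\veps_n$ (opposite), while the denominators $q_nq_n'$ and $q_nq_n''$ have the \emph{same} sign, not opposite; your sentence ``the two denominators' product carries opposite signs'' is backwards. For \eqref{Lem:rel_conv_3} your M\"obius picture is correct, but it hides a step you do not mention: you need $A_{(a_n,\veps_n)}$ to send not only $0$ and $1$ but also $\infty$ into $[0,1)$. This is true---from \eqref{Aae1} one has $A_{(a_n,\veps_n)}(\infty)=(a_n-1)/a_n\in[0,1)$---but it is not automatic and is exactly the content you would have to check to make the monotone-image argument go through. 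The paper's weighted-mediant formulas sidestep this entirely.
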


\begin{proof}
The first two assertions follow from \eqref{Eq:p_n'+p_n''} and the fact that for two rationals $a/b$ and $c/d$ such that $bd>0$, if $a/b \le c/d$, then
$$\frac{a}{b}\le \frac{a+c}{b+d}\le \frac{c}{d}.$$
%Then, the first and second assertions .
%By Lemma~\ref{Lem:recursive}, $p_n=p_n'+p_n''$ and $q_n=q_n'+q_n''$.
%Hence, $p_{n}/q_{n}$ is between $p'_n/q'_n$ and $p''_n/q''_n$.

%(2) 
%On the Farey graph, 
%there are only two triangles which share the arc connecting $p_{n}'/q_{n}'$ and $p_{n}''/q_{n}''$.
%By Lemma~\ref{Lem:adjacency_of_convergents}, the other end points of the two triangles are $p_{n-1}/q_{n-1}$ and $p_{n}/q_{n}$.
%By (\ref{Lem:rel_conv_1}), $p_{n-1}/q_{n-1}$ can not be between $p'_n/q'_n$ and $p''_n/q''_n$ since two ideal triangles of the Farey graph do not overlap each other.

For (3), by Lemma~\ref{Lem:recursive}, we have
\begin{equation*}
\begin{cases}
p_{n}'=(a_{n}-1)p_{n-1}+p_{n-1}'',\\
q_{n}'=(a_{n}-1)q_{n-1}+q_{n-1}'',
\end{cases}
\quad \text{and} \quad
\begin{cases}
p_{n}''=(a_{n}-1+\veps_{n})p_{n-1}+p_{n-1}'', \\ 
q_{n}''=(a_{n}-1+\veps_{n})q_{n-1}+q_{n-1}''.
\end{cases}
\end{equation*}
Since $a_{n}-1\ge 0$ and $a_{n}+\veps_{n}-1\ge 0$, both $p_{n}'/q_{n}'$ and $p_{n}''/q''_{n}$ are in $I_{n-1}$.
By the first assertion, % (\ref{Lem:rel_conv_1}), 
$p_n/q_n$ is also in $I_{n-1}$.
\end{proof}

%\les{The following theorem is an analog of the theorem of Lagrange and Euler for the RCFs of quadratic irrationals (See \cite[Chapter III-\S1]{RoSz92} and \cite[Section 10]{Khi63}).}
%
%\begin{theorem}
%An eventually periodic OOCF expansion converges to an $\infty$-rational or a quadratic irrational.
%Moreover, a quadratic irrational has an eventually periodic OOCF expansion.
%\end{theorem}
\medskip
Now, we are ready to prove Theorem \ref{Thm:main2}.
\begin{proof}[Proof of Theorem \ref{Thm:main2}]
%\noindent \eqref{Prop:1-rat:irr}:
If $x$ has an eventually periodic OOCF, then there exist distinct positive integers $i$ and $j$ such that $T^i(x)=T^j(x)$.
Since $T^i$ and $T^j$ are linear fractional maps, $T^i(x)$ is either $0$ or a quadratic irrational.
In the former case $x$ is an $\infty$-rational, while in the latter case $x$ is a quadratic irrational.
%If $x$ has an eventually periodic OOCF, then there exist $i$ and $j$ such that $\zeta_{i+1}=\zeta_{j+1}$.
%By (\ref{Eq:completequotient}), 
%\[
%x=\cfrac{p_{i}''+p_{i}'\zeta_{i+1}}{q_{i}''+q_{i}'\zeta_{i+1}}=\cfrac{p_{j}''+p_{j}'\zeta_{j+1}}{q_{j}''+q_{j}'\zeta_{j+1}}.
%\]
%Thus we have $x=p_i''/q_i''$ or 
%$${1 \over \zeta_{i+1}}=\cfrac{p_i'-q_i'x}{q_i''x-p_i''}=\cfrac{p_j'-q_j'x}{q_j''x-p_j''}={1 \over \zeta_{j+1}}.$$
%The first case corresponds to the case that $x$ is an $\infty$-rational and $\zeta_{i+1}=0$. 
%While the second case implies 
%$$(q_i'q_j''-q_j'q_i'')x^2+(q_j'p_i''+p_j'q_i''-q_i'p_j''-p_i'q_j'')x+(p_i'p_j''-p_i''p_j')=0,$$
%which means $x$ is a quadratic irrational (remark that $q_i'q_j''-q_j'q_i''\neq 0$).
%Since $q_j'$ and $q_j$ are relatively prime, if $q_i'q_j=q_j'q_i$, then $q_j|q_i$.
%It contradicts to $q_j>q_i$.

%\noindent (5):
For the second assertion, let $x$ be a quadratic irrational between $0$ and $1$ such that $\alpha_1x^2+\beta_1x+\gamma_1=0$ where $\alpha_1$, $\beta_1$ and $\gamma_1$ are coprime integers.
%Since $$x=\cfrac{p_i''+p_i'\zeta_{i+1}}{q_i''+q_i'\zeta_{i+1}},$$
By  (\ref{Eq:completequotient}), we have for all $i\geq 1$,
$$\alpha_1(p_i''+p_i'\zeta_{i+1})^2+\beta_1(p_i''+p_i'\zeta_{i+1})(q_i''+q_i'\zeta_{i+1})+\gamma_1(q_i''+q_i'\zeta_{i+1})^2=0.$$
%Then
%$$(a_1p_i'^2+b_1p_i'q_i'+c_1q_i'^2)\zeta_{i+1}^2+(2a_ip_i''p_i'+b_1(p_i''q_i'+p_i'q_i'')+2c_1q_i''q_i')\zeta_{i+1}+(a_1p_i''^2+b_1p_i''q_i''+c_1q_i''^2)=0.$$
For $i\geq 1$, let
\begin{equation}\begin{split}\label{Eq:coefficients}
&\alpha_{i+1}=\alpha_1(p_i')^2+\beta_1p_i'q_i'+\gamma_1(q_i')^2,\\
&\beta_{i+1}=2\alpha_1p_i''p_i'+\beta_1(p_i''q_i'+p_i'q_i'')+2\gamma_1q_i''q_i',\\% \text{ and}\\
&\gamma_{i+1}=\alpha_1(p_i'')^2+\beta_1p_i''q_i''+\gamma_1(q_i'')^2.
\end{split}\end{equation}
Then 
\begin{align}\label{Eq:zetas}
\alpha_{i+1}\zeta_{i+1}^2+\beta_{i+1}\zeta_{i+1}+\gamma_{i+1}=0.
\end{align}
Since $|q_i'p_i''-q_i''p_i'|=1$, we can check that
\begin{align}\label{bounded-coeff}
\beta_{i+1}^2-4\alpha_{i+1}\gamma_{i+1}=\beta_1^2-4\alpha_1\gamma_1.
\end{align}
On the other hand, we also have
%$$|q_i''(q_i'x-p_i')+q_i'(p_i''-q_i''x)|=1.$$
$|(x-p_i'/q_i')+(p_i''/q_i''-x)|=1/q_i'q_i''.$
%{\color{blue} (SB: I added the explanation:)}
Thus, by Lemma \ref{Lem:xbtw} and the fact that $p_i/q_i$ is between $p_i'/q_i'$ and $p_i''/q_i''$, we deduce that $x-p_i'/q_i'$ and $x-p_i''/q_i''$ have opposite signs.
%Thus, $q_i''|q_i'x-p_i'|<1$ and $q_i'|p_i''-q_i'x|<1.$ %{\color{red} ( why?)}
Thus, $|q_i'x-p_i'|<1/q_i''$ and $|p_i''-q_i''x|<1/q_i'.$
Hence, there are $|\delta|<1$ and $|\lambda|<1$ such that
\begin{equation}\label{Eq:p_i'p_i''}
p_i' = q_i'x+\delta/q_i'' \ \text{ and } \ p_i'' = q_i''x+\lambda/q_i'.
\end{equation}
By plugging \eqref{Eq:p_i'p_i''} in \eqref{Eq:coefficients}, we derive the following expressions 
\begin{equation}\label{eq:a}
\alpha_{i+1}
%=a_1\left(q_i'x+\frac{\alpha}{q_i''}\right)^2+b_1\left(q_i'x+\frac{\alpha}{q_i''}\right)q_i'+c_1(q_i')^2 
=\delta\left(\frac{q_i'}{q_i''}(2\alpha_1x+\beta_1)+\alpha_1\frac{\delta}{(q_i'')^2}\right), 
\end{equation}
\begin{equation}\label{eq:b}
\beta_{i+1} 
%& =2a_1\left(q_i''x+\frac{\beta}{q_i'}\right)\left(q_i'x+\frac{\alpha}{q_i''}\right)+b_1\left(\left(q_i''x+\frac{\beta}{q_i'}\right)q_i'+\left(q_i'x+\frac{\alpha}{q_i''}\right)q_i''\right)+2c_1q_i'q_i''  \\ & 
= (2\alpha_1x+\beta_1)(\delta+\lambda)+2\alpha_1\frac{\delta\lambda}{q_i'q_i''}  \text{ and }
\end{equation}
\begin{equation}\label{eq:c}
\gamma_{i+1}
%=a_1\left(q_i''x+\frac{\beta}{q_i'}\right)^2+b_1\left(q_i''x+\frac{\beta}{q_i'}\right)q_i''+c_1(q_i'')^2 
=\lambda\left(\frac{q_i''}{q_i'}(2\alpha_1x+\beta_1)+a_1\frac{\lambda}{(q_i')^2}\right). %\text{ and} 
\end{equation}

By \eqref{eq:b}, we have $|\beta_{i+1}| \le 2(|2\alpha_1|+|\beta_1|)+|2\alpha_1|$, thus the coefficient $\beta_{i+1}$ is bounded.
%\sout{
%By  \eqref{eq:a}, we have $|\alpha_{i+1}|<2|\alpha_1|+|\beta_1|+|\alpha_1|$ \don{(do we need $q_i''\ge q_i'$ here?)}.
%If $q_i''\ge q_i'$, then $\alpha_{i+1}$ is bounded. % since $|a_{i+1}|<2|a_1|+|b_1|+|a_1|$ by \eqref{eq:abc}.
%}
If $q_i''\ge q_i'$, then by \eqref{eq:a}, we have $|\alpha_{i+1}|<2|\alpha_1|+|\beta_1|+|\alpha_1|$.
Thus, $\alpha_{i+1}$ is bounded. %{\color{red}($\leftarrow$ Seulbee: Is it ok?)}% since $|a_{i+1}|<2|a_1|+|b_1|+|a_1|$ by \eqref{eq:abc}.
Further, by (\ref{bounded-coeff}), $\gamma_{i+1}$ is bounded.
Similarly, if $q_i''<q_i'$, then by \eqref{eq:c}, $\gamma_{i+1}$ is bounded since $|\gamma_{i+1}|<2|\alpha_1|+|\beta_1|+|\alpha_1|$. Moreover, by (\ref{bounded-coeff}), $\alpha_{i+1}$ is also bounded.
%If $q_i''>q_i'$, then $|q_i'x-p_i'|<1/q_i''<1/q_i'$.
%Then, there is $\veps$ such that $p_i'=q_i'x+\veps/q_i'$ where $|\veps|<1$.
%Hence, 
%$$a_{i+1}=a_1(q_i'x+\veps/q_i')^2+b_1(q_i'x+\veps/q_i')q_i'+c_1q_i'^2 =\veps(2a_1x+b_1+a_1\veps/q_i^2).
%$$
%The coefficient $a_{i+1}$ is bounded since $|a_{i+1}|<2|a_1x|+|b_1|+|a_1|$.
%If $q_i''<q_i'$, then $|q_i''x-p_i''|<1/q_i'<1/q_i''.$
%Similarly, we can show that $c_{i+1}$ is bounded.({\color{red}why?})
%By (\ref{bounded-coeff}), $b_{i+1}$ is also bounded. 
%Thus the coefficients of all the equations of $\zeta_{i+1}$ are bounded.
Thus in all cases, the coefficients of the equation \eqref{Eq:zetas} are all bounded. Therefore, $\{\zeta_i\}_{i\in\bN}$ has only finitely many values which means that $\zeta_n=\zeta_m$ for some $m$ and $n$.
Therefore, the OOCF expansion of $x$ is eventually periodic.
\end{proof}

\begin{remark}
From the proof of Proposition \ref{Thm:1-rat_quad}-\eqref{Prop:1-rat:inftyrat}, we see that if $x$ is an $\infty$-rational, then its OOCF ends with $(2,-1)^\infty$ and thus there exists $n_0\geq 0$ such that $\zeta_{n+1}=0$ for all $n\geq n_0$. Hence, by (\ref{Eq:completequotient}), we have $x=p''_n/q''_n$ for all $n\geq n_0$.
\end{remark}

At the end of this section, let us discuss the relation between the OOCF convergents of a number $x$ and the EICF convergents of $1-x$.

We denote the EICF expansion in \eqref{Eq:eicf} by a sequence in a double angle bracket:
$$%\cfrac{1}{b_1+\cfrac{\eta_1}{b_2+\cfrac{\eta_2}{b_3+\cfrac{\eta_3}{\ddots}}}}=
\ldab(b_1,\eta_1),(b_2,\eta_2),\cdots, (b_n,\eta_n),\cdots \rdab=\ldab(b_n,\eta_n)_{n\in\bN}\rdab.
$$
%We denote by $p_n^E(x)/q_n^E(x)$ the $n$-th EICF convergent of $x$.
%\sout{The map $\TE$ is the left shift map of EICF expansions, i.e.,}
%$$\TE(\ldab(b_1,\eta_1),(b_2,\eta_2),\cdots, (b_n,\eta_n),\cdots \rdab) = \ldab(b_2,\eta_2),(b_3,\eta_3),\cdots, (b_n,\eta_n),\cdots \rdab.$$
The $n$-th EICF convergent is denoted by 
$$\frac{p^E_n}{q^E_n}=\ldab(b_1,\eta_1),(b_2,\eta_2),\cdots, (b_n,\eta_n)\rdab.$$
By Lemma \ref{Lem:generalrecursive}, we have the following matrix relation:
\begin{equation}\label{eicf}
\begin{pmatrix}
b_0 & \eta_0 \\ 1 & 0
\end{pmatrix}
\begin{pmatrix}
b_1 & \eta_1 \\ 1 & 0
\end{pmatrix}
\cdots 
\begin{pmatrix}
b_n & \eta_n \\ 1 & 0
\end{pmatrix}
= 
\begin{pmatrix}
q^E_n & \eta_n q^E_{n-1} \\ p^E_n & \eta_n p^E_{n-1}
\end{pmatrix}.
\end{equation}
Since each matrix in \eqref{eicf} belongs to $\Theta\cup\begin{pmatrix}0&1\\1&0\end{pmatrix}\Theta$, each EICF convergent $p^E_n/q^E_n$ is an $\infty$-rational.

Observe that if $p_n^E(x)/q_n^E(x)$ is of type even/odd, then $1-p_n^E(x)/q_n^E(x)$ is $1$-rational.
If $p_n^E(x)/q_n^E(x)$ is of type odd/even, then $1-p_n^E(x)/q_n^E(x)$ is still of type odd/even.
%The set of $1-p_n^E(1-x)/q_n^E(1-x)$ contains the best $1$-rational approximations of $x$.
%However, the set contains the other rationals. 

\begin{proposition}
Let $x\in(0,1)$.
All rationals of type odd/odd in $\{1-p_n^E(1-x)/q_n^E(1-x): n\ge 1 \}$ are best $1$-rational approximations of $x$, and hence are OOCF principal convergents of $x$.
%Moreover, the sequence of odd/odd in $1-p_n^E(1-x)/q_n^E(1-x)$ is the subsequence of OOCF principal convergents of $x$.
\end{proposition}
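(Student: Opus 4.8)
The plan is to transfer the best-approximation property across the reflection $t\mapsto 1-t$, exploiting the denominator-preserving symmetry between $1$-rationals and even/odd $\infty$-rationals, and then to appeal to Theorem~\ref{Thm:main1}.

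First I would record the relevant symmetry. For any rational $A/B$ in lowest terms, its reflection is $1-A/B=(B-A)/B$, and since $\gcd(B-A,B)=\gcd(A,B)=1$ the reflection preserves the denominator and is an involution on $\bQ$. As already noted before the statement, $t\mapsto 1-t$ interchanges the $1$-rationals (odd/odd) with the even/odd $\infty$-rationals. The one-line identity $|B(1-x)-(B-A)|=|Bx-A|$ then says that, at a fixed denominator $B$, the error in approximating $x$ by $A/B$ equals the error in approximating $1-x$ by its reflection $(B-A)/B$.

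Next, fix $n\ge 1$ for which $p/q:=p_n^E(1-x)/q_n^E(1-x)$ is of type even/odd, so that $P/Q:=1-p/q$ is a $1$-rational with $Q=q$ and $P=q-p$. By the theorem of Short--Walker \cite{ShWa14}, the EICF convergents of $1-x$ are precisely its best $\infty$-rational approximations, so $p/q$ is a best $\infty$-rational approximation of $y:=1-x$. I would then verify the best $1$-rational property of $P/Q$ directly: given any competing $1$-rational $A/B\ne P/Q$ with $0<B\le Q=q$, its reflection $a'/b':=1-A/B$ is an even/odd $\infty$-rational with $b'=B\le q$ and, since $A/B\ne P/Q=1-p/q$, with $a'/b'=1-A/B\ne p/q$. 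The defining inequality of a best $\infty$-rational approximation (which quantifies over all $\infty$-rationals, in particular the even/odd ones) gives $|qy-p|<|b'y-a'|$, and translating both sides through the error identity yields $|Qx-P|<|Bx-A|$. Hence $P/Q$ is a best $1$-rational approximation of $x$, and Theorem~\ref{Thm:main1} identifies it with an OOCF principal convergent of $x$.

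I do not expect a serious obstacle here; once the symmetry is set up the argument is essentially a dictionary translation. The only point requiring care is the bookkeeping of parities and denominators: one must confirm that the reflection sends each $1$-rational competitor to a genuine even/odd $\infty$-rational competitor of the \emph{same} denominator, distinct from $p/q$, so that the Short--Walker inequality applies verbatim. It is also worth observing that the proof invokes that inequality only against even/odd $\infty$-rationals, a proper subset of the competitors controlled by the best $\infty$-rational property, so no strengthening of Short--Walker's result is needed.
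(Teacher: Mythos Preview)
Your proof is correct and follows essentially the same approach as the paper: both arguments reflect the competing $1$-rational $c/d$ across $t\mapsto 1-t$ to obtain an $\infty$-rational $(d-c)/d$ with the same denominator, invoke Short--Walker's best $\infty$-rational approximation inequality at $1-x$, and translate via $|d(1-x)-(d-c)|=|dx-c|$. Your exposition is slightly more explicit about the parity bookkeeping (noting that the reflection lands in the even/odd subclass), but the mathematical content is identical.
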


\begin{proof}
For each $n\geq 0$, denote by $P_n/Q_n:=1-p_n^E(1-x)/q_n^E(1-x)$.
We have
$$P_n = q_n^E(1-x)-p_n^E(1-x) \ \andd \ Q_n = q_n^E(1-x).$$
By the theorem of Short and Walker (see \eqref{Eq:SW}; also \cite[Theorem 5]{ShWa14}), 
for any $a/b\in\Theta(\infty)$ such that $1\le b\le q_n^E(1-x)$ and $a/b\not=p_n^E(1-x)/q_n^E(1-x)$,  
$$|P_n-Q_nx |= |q_n^E(1-x)-p_n^E(1-x)-x \cdot q_n^E(1-x)| 
%= |(1-x)\cdot q_n^E(1-x)-p_n^E(1-x)|
 < |(1-x)b-a|.$$ 
For any $c/d\in\Theta(1)$ such that $1\le d\le Q_n$ and $c/d\not=P_n/Q_n$, we have 
$${d-c \over d} = 1-{c \over d}\in\Theta(\infty) \ \andd \ {d-c \over d} \not=p_n^E(1-x)/q_n^E(1-x).$$
Thus $|P_n-Q_nx|<|(1-x)d-(d-c)|=|c-dx|$, which means that $P_n/Q_n$ is a best $1$-rational approximation of $x$.
\end{proof}

The next proposition describes a connection between the principal convergents of OOCF and EICF. 
Recall that $f(x) = \frac{1-x}{1+x}$ is the conjugacy map defined in Section 2.
\begin{proposition}
%Let $f(x) = \frac{1-x}{1+x}$.
Let $x\in(0,1)$. There is a 1-1 correspondence between the partial quotients of OOCF of $x$ and that of EICF of $f(x)$. In particular, $p_n^E(f(x))/q_n^E(f(x))=f(p_n(x)/q_n(x))$
{ for all $n \ge 1$.}
%{\color{teal}Moreover, .}
%The EICF principal convergents of $f(x)$ is $f(\text{The OOCF principal convergents of }x)$.
\end{proposition}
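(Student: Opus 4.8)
The plan is to build everything on the conjugacy $f\circ\TO\circ f^{-1}=\TE$ established in Section~\ref{Sec:Map}. Iterating it gives the intertwining $\TE^{\,n}(f(x))=f(\TO^{\,n}(x))$ for every $n$, and since the partial quotients of either algorithm are exactly the symbolic itinerary of the orbit (the OOCF datum $(a_n,\veps_n)$ records which branch interval $B(\cdot,\cdot)$ contains $\TO^{n-1}(x)$, while $(b_n,\eta_n)$ records which branch interval of $\TE$ contains $\TE^{n-1}(f(x))=f(\TO^{n-1}(x))$), a branch-to-branch matching yields the bijection. First I would note that $f$ is an orientation-reversing homeomorphism of $[0,1]$ (indeed $f'<0$, $f(0)=1$, $f(1)=0$) carrying each OOCF branch interval onto exactly one EICF branch interval. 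Evaluating $f$ at the partition endpoints $\tfrac{k-1}{k},\tfrac{2k-1}{2k+1},\tfrac{k}{k+1}$ and comparing with the EICF partition $\bigl[\tfrac{1}{2k+1},\tfrac{1}{2k}\bigr]\cup\bigl[\tfrac{1}{2k},\tfrac{1}{2k-1}\bigr]$ produces the explicit dictionary
\[
(a,+1)\longleftrightarrow(2a,+1),\qquad (a,-1)\longleftrightarrow(2a-2,-1),
\]
that is, $b_n=2a_n+\veps_n-1$ and $\eta_n=\veps_n$. This is the claimed $1$–$1$ correspondence of partial quotients.

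For the convergent identity I would bypass any translation between the two matrix conventions and argue directly through the recursions. Since $p_n/q_n\in\Theta(1)$ by Proposition~\ref{Thm:parity_of_convergents}, both $p_n$ and $q_n$ are odd, so
\[
f\!\left(\frac{p_n}{q_n}\right)=\frac{q_n-p_n}{q_n+p_n}
\]
has even numerator and denominator. Setting $\tilde p_n:=q_n-p_n$ and $\tilde q_n:=q_n+p_n$, I would substitute the principal-convergent recursion \eqref{Eq:recursivePC} together with the dictionary relations $2a_n+\veps_n-1=b_n$ and $\veps_{n-1}=\eta_{n-1}$ into these differences and sums. Because \eqref{Eq:recursivePC} is linear and identical in form for $p_n$ and $q_n$, both $\tilde p_n$ and $\tilde q_n$ then satisfy the EICF convergent recursion $u_n=b_nu_{n-1}+\eta_{n-1}u_{n-2}$ supplied by Lemma~\ref{Lem:generalrecursive}.

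It then remains to match initial data. Using $p_{-1}=-1$, $q_{-1}=1$, $p_0=q_0=1$ (Lemma~\ref{Lem:recursive}) one finds $(\tilde p_{-1},\tilde q_{-1})=(2,0)$ and $(\tilde p_0,\tilde q_0)=(0,2)$, which is precisely twice the EICF seed $(p^E_{-1},q^E_{-1})=(1,0)$, $(p^E_0,q^E_0)=(0,1)$ coming from Lemma~\ref{Lem:generalrecursive} with $b_0=0$, $\eta_0=1$. Since both pairs of sequences obey the same linear recursion, induction gives $\tilde p_n=2p^E_n$ and $\tilde q_n=2q^E_n$ for all $n$, whence $f(p_n/q_n)=\tilde p_n/\tilde q_n=p^E_n/q^E_n$, as required.

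I expect the main obstacle to be bookkeeping rather than ideas. One must fix the EICF sign and index conventions so that the convergents of $f(x)\in(0,1)$ are genuinely its values in $(0,1)$ (forcing $b_0=0$, $\eta_0=1$ and the seed $(p^E_0,q^E_0)=(0,1)$ rather than its transpose), and one must verify that the branch correspondence of the first step holds with the precise labeling used for $\TE$, including the degenerate cases $a_n=1$ (where $\veps_n=1$ is forced) and rational $x$ with finite expansions. Once the dictionary $b_n=2a_n+\veps_n-1$, $\eta_n=\veps_n$ and the two seeds are correctly aligned, both halves of the statement collapse onto the single linear recursion and the argument closes.
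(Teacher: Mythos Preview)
Your derivation of the branch correspondence via the conjugacy $f\circ\TO\circ f^{-1}=\TE$ is exactly the paper's argument: they too compute $f(B(k+1,-1))=[\tfrac{1}{2k},\tfrac{1}{2k-1}]$ and $f(B(k,1))=[\tfrac{1}{2k+1},\tfrac{1}{2k}]$, arriving at the same dictionary $\phi:(k,1)\mapsto(2k,1)$, $(k+1,-1)\mapsto(2k,-1)$, which in your notation is $b_n=2a_n+\veps_n-1$, $\eta_n=\veps_n$.

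For the convergent identity you take a genuinely different route. The paper simply remarks that the correspondence of partial quotients, applied to the \emph{finite} expansion $p_n/q_n=[\![(a_1,\veps_1),\dots,(a_n,\veps_n)]\!]$, gives $f(p_n/q_n)=\ldab\phi(a_1,\veps_1),\dots,\phi(a_n,\veps_n)\rdab=p_n^E(f(x))/q_n^E(f(x))$; this is a one-line conceptual consequence of the conjugacy. You instead prove it computationally by observing that \eqref{Eq:recursivePC} together with your dictionary is literally the EICF recursion $u_n=b_nu_{n-1}+\eta_{n-1}u_{n-2}$, so $\tilde p_n=q_n-p_n$ and $\tilde q_n=q_n+p_n$ satisfy it, and then match the initial data $(\tilde p_{-1},\tilde q_{-1},\tilde p_0,\tilde q_0)=(2,0,0,2)=2(p^E_{-1},q^E_{-1},p^E_0,q^E_0)$. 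Your argument buys an explicit numerator/denominator identity $q_n-p_n=2p_n^E$, $q_n+p_n=2q_n^E$ (not just equality of ratios) and sidesteps any subtlety about applying the dynamical correspondence to finite or rational inputs; the paper's argument is shorter and exposes why the result is immediate once the conjugacy is in hand. Both are correct; the bookkeeping concerns you flag (the EICF seed $b_0=0$, $\eta_0=1$ and the degenerate case $a_n=1$) are handled consistently by either approach.
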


\begin{proof}
Since
\begin{equation*}%\label{Eq:cells}
f(B(k+1,-1))=\left[\frac{1}{2k},\frac{1}{2k-1}\right],\quad f(B(k,1))=\left[\frac{1}{2k+1},\frac{1}{2k}\right],
\end{equation*}
%Thus, $f(B(k+1,-1))$, $f(B(k,1))$ are subintervals which correspond to $(2k,1), (2k,-1)\in\mathcal{A}_{\text{EICF}}$.
%Thus, if $x\in B(k+1,-1)\cup B(k.1)$, then $\frac{p_1}{q_1}=\frac{2k-1}{2k+1}$ and $\frac{p_n^E(f(x))}{q_n^E(f(x))}=\frac{1}{2k}$.
%In other words, 
there is a 1-1 correspondence $\phi$ between the partial quotients of OOCF and EICF as follows:
%$$\phi: (k+1,-1)\mapsto (2k,1), \quad (k,1)\mapsto (2k,-1).$$
$$\phi: (k+1,-1)\mapsto (2k,-1), \quad (k,1)\mapsto (2k,1).$$
Thus, for $x=[\![(a_n,\veps_n)_{n\in\bN}]\!]$, the EICF expansion of $f(x)$ is $\ldab \phi(a_n,\veps_n)_{n\in\bN}\rdab$. Considering the finite expansions, we obtain the second assertion. 
\end{proof}
%{\color{red} The proof is a more general one which shows the correspondance of the two expansions. The proposition is only for finite expansions. {\color{teal}$\uparrow$Lee: Please check the second sentence in the statement.}}

\section{Best $1$-rational approximation}\label{Sec:BestA}
\begin{figure}[t]
\begin{tikzpicture}[scale=2.5]
\filldraw[gray!40] (0,2.5) -- (-2.5,2.5) arc (180:270:2.5) -- cycle;
\draw (0,0) arc (-90:-180:2.5);
%\draw (-5,2.5) -- (-2.5,2.5) arc (0:-90:2.5) -- cycle;
\draw (-5,0) arc (-90:0:2.5);
\draw (-2.5,0) arc (-90:270:0.625);
\filldraw[fill = gray!40] (-3.3333,0) arc (-90:270:0.27777);
\draw (-1.6666,0) arc (-90:270:0.27777);
\draw(-0.75*5,0) arc (-90:270:0.15625);
\draw(-0.25*5,0) arc (-90:270:0.15625);
\filldraw[fill = gray!40](-0.8*5,0) arc (-90:270:0.1);
\draw(-0.6*5,0) arc (-90:270:0.1);
\filldraw[fill = gray!40](-0.4*5,0) arc (-90:270:0.1);
\draw (-0.2*5,0) arc (-90:270:0.1);

	\node at (-5,-0.2)  {$\frac01$};	\node at (0,-0.2)  {$\frac11$};	
	\node at (-2.5,-0.2)  {$\frac 12$};	
	\node at (-3.33333,-0.2)  {$\frac 13$};	\node at (-1.6666,-0.2)  {$\frac 23$};	
	\node at (-3.75,-0.2){$\frac 14$}; \node at (-3,-0.2){$\frac 25$};	
	\node at (-2,-0.2){$\frac35$}; \node at (-1.25,-0.2){$\frac34$};
	\node at (-1,-0.2){$\frac 45$}; 
	\node at (-0.8*5,-0.2){$\frac 15$};
	
	\draw (-5.5,0) -- (0.5,0);
	\draw (0,-0.05) -- (0,0.05);
	\draw (-5,-0.05) -- (-5,0.05);

%1/6
\draw(-5+0.16666*5,0) arc (-90:270:0.01388888*5); \node at (-5*0.16666*5,-0.2) {$\frac 16$};
%1/7
\filldraw[fill = gray!40](-5+5/7,0) arc (-90:270:5/98); \node at (-5+5/7,-0.2) {$\frac 17$};
%1/8
\filldraw[fill = white](-5+5/8,0) arc (-90:270:5/128); \node at (-5+5/8,-0.2) {$\frac 18$};
%1/9
\filldraw[fill = gray!40](-5+5/9,0) arc (-90:270:5/81*0.5); \node at (-5+5/9,-0.2) {$\frac 19$};
%1/10
\filldraw[fill = white](-5+5/10,0) arc (-90:270:5*1/10*1/10*0.5); %\node at (-5+5/10,-0.2) {$\frac {1}{10}$};

%2/9
%\filldraw[fill = white](-5+5*2/9,0) arc (-90:270:5*1/9*1/9*0.5); \node at (-5+5*2/9,-0.2) {$\frac {2}{9}$};
\hocir{2}{9}{white}
\hocir{2}{7}{white}
\hocir{3}{8}{white}
\hocir{3}{7}{gray!40}
\hocir{5}{9}{gray!40}
\hocir{4}{9}{white}
\hocir{4}{7}{white}
\hocir{5}{8}{white}
\hocir{5}{7}{gray!40}
\hocir{7}{9}{gray!40}
\hocir{5}{6}{white}
\hocir{6}{7}{white}
\hocir{7}{8}{white}
\hocir{8}{9}{white}
\filldraw[fill = gray!40](-5+5*9/11,0) arc (-90:270:5*1/11*1/11*0.5);
\filldraw[fill = white](-5+5*3/10,0) arc (-90:270:5*1/10*1/10*0.5);

%\filldraw[fill = #3](-5+5*#1/#2,0) arc (-90:270:5*1/#2*1/#2*0.5); \node at (-5+5*#1/#2,-0.2) {$\frac {#1}{#2}$};

\end{tikzpicture}
\caption{Ford circles: white circles are based at $\infty$-rationals and gray circles are based at $1$-rationals}
\label{Fig:fordcircles}
\end{figure}
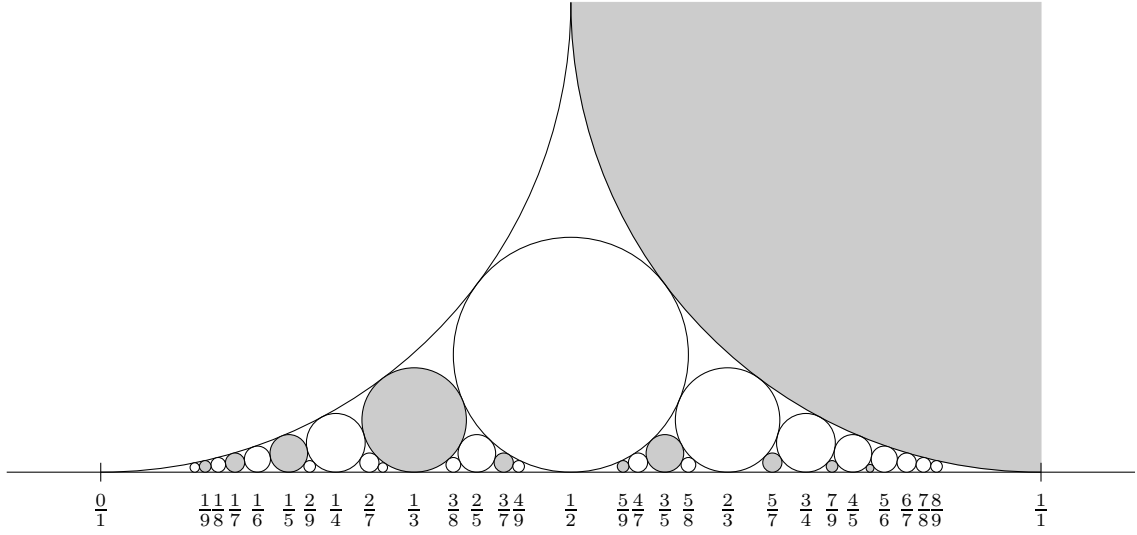
In this section, we prove Theorem~\ref{Thm:main1}.
Recall that $\bH$ is the upper half-plane.
%Now, let us consider the hyperbolic plane $\bH$ as the upper half-plane model.
The boundary of $\bH$ is $\bR_{\infty}=\bR\cup\{\infty\}$.
Denote by $C_{a/b}$ the horocycle of $\bH$ based at $a/b$ whose Euclidean radius is $(2b^2)^{-1}$ and $C_\infty$ the line $\{z=x+i\in\bH:x\in\bR\}$ (see Figure~\ref{Fig:fordcircles}).
We call $C_{a/b}$ \emph{a Ford circle}. The radius of $C_{a/b}$ is denoted by $\text{rad}(C_{a/b})$.
We remark that two Ford circles $C_{a/b}$ and $C_{c/d}$ are adjacent to each other if and only if $|ad-bc|=1.$
Let $R_{a/b}(x)$ be the Euclidean radius of the horocycle based at $x$ tangent to $C_{a/b}$. Then by Pythagorean theorem,  $$R_{a/b}(x)=\frac{1}{2}|bx-a|^2.$$
Thus
\begin{align}\label{equivalence:Ford}
|qx-p|<|bx-a| \Longleftrightarrow R_{p/q}(x)<R_{a/b}(x).
\end{align}
Since the Ford circles are not overlapped each other, we have
\begin{equation}\label{Eq:pf5}\text{rad}(C_{a/b})\le R_{c/d}\left( a/b\right)\text{ for all } a/b,\ c/d\in\bQ.\end{equation}

%
%\begin{theorem}[\les{(Seulbee: Do we need a rephrase of Theorem~\ref{Thm:main} here?)}]\label{Thm:bestapp}
%All principal convergents $p_n/q_n$ are best $1$-rational approximations, and vice versa.
%\end{theorem}
With these preparations, we are ready to prove our Theorem~\ref{Thm:main1}.
\begin{proof}[Proof of Theorem~\ref{Thm:main1}]

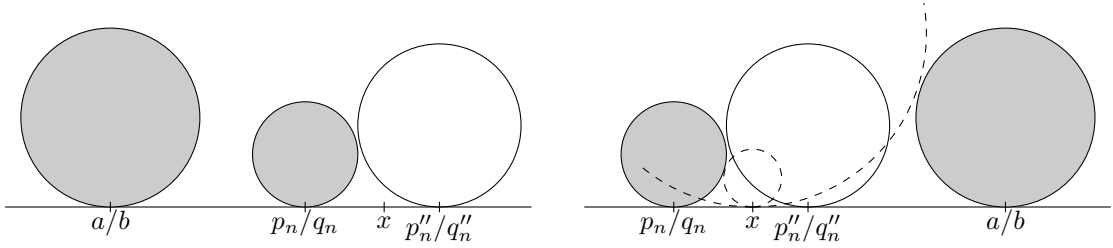
\begin{figure}[t]
\begin{tikzpicture}[scale=0.7]
\draw (-7,0) -- (3,0);
\filldraw[fill = gray!40] (-5,0) arc (-90:270:1.7);
\node at (-5,-0.3) {$a/b$};
\draw (-5,-0.1) -- (-5,0.1);

\filldraw[fill = gray!40] (-0.26*5,0) arc (-90:270:1);
\node at (-0.26*5,-0.3) {$p_n/q_n$};
\draw (-0.26*5,-0.1) -- (-0.26*5,0.1);

\draw (0.25*5,0) arc (-90:270:1.55);
\node at (0.25*5,-0.4) {$p''_n/q''_n$};
\draw (0.25*5,-0.1) -- (0.25*5,0.1);

\node at (0.1*5-0.3,-0.3) {$x$};
\draw (0.1*5-0.3,-0.1) -- (0.1*5-0.3,0.1);

\draw (7-3,0) -- (7+7,0);
\filldraw[fill = gray!40] (7+5,0) arc (-90:270:1.7);
\node at (7+5,-0.3) {$a/b$};
\draw (7+5,-0.1) -- (7+5,0.1);

\filldraw[fill = gray!40] (7-0.26*5,0) arc (-90:270:1);
\node at (7-0.26*5,-0.3) {$p_n/q_n$};
\draw (7-0.26*5,-0.1) -- (7-0.26*5,0.1);

\draw (7+0.25*5,0) arc (-90:270:1.55);
\node at (7+0.25*5,-0.4) {$p''_n/q''_n$};
\draw (7+0.25*5,-0.1) -- (7+0.25*5,0.1);

\draw[style = dashed] (7+0.1*5-0.3,0) arc (-90:270:0.55);
\draw[style = dashed] (7+0.1*5-0.3,0) arc (-90:10:3.3);
\draw[style = dashed] (7+0.1*5-0.3,0) arc (-90:-130:3.3);
\node at (7+0.1*5-0.3,-0.3) {$x$};
\draw (7+0.1*5-0.3,-0.1) -- (7+0.1*5-0.3,0.1);

\end{tikzpicture}
\caption{Two possible relative locations of $x$, $p_n/q_n$, $a/b$ and $p''_n/q''_n$ in the proof of Theorem~\ref{Thm:main1}. The dashed circles are the horocycles based at $x$ tangent to $C_{p_n/q_n}$ and $C_{a/b}$.}
\label{Fg:bestapp_1}
\end{figure}

%\begin{figure}[h]
%\begin{tikzpicture}[scale=0.5]
%\draw (-3,0) -- (7,0);
%\filldraw[fill = gray!40] (5,0) arc (-90:270:1.7);
%\node at (5,-0.3) {$w$};
%\draw (5,-0.1) -- (5,0.1);
%
%\filldraw[fill = gray!40] (-0.26*5,0) arc (-90:270:1.5);
%\node at (-0.26*5,-0.3) {$u$};
%\draw (-0.26*5,-0.1) -- (-0.26*5,0.1);
%
%\draw (0.25*5,0) arc (-90:270:1.05);
%\node at (0.25*5,-0.3) {$v$};
%\draw (0.25*5,-0.1) -- (0.25*5,0.1);
%
%\draw[style = dashed] (0.1*5,0) arc (-90:270:0.55);
%\draw[style = dashed] (0.1*5,0) arc (-90:20:2.95);
%\draw[style = dashed] (0.1*5,0) arc (-90:-130:2.95);
%\node at (0.1*5,-0.3) {$x$};
%\draw (0.1*5,-0.1) -- (0.1*5,0.1);
%
%\end{tikzpicture}
%%\caption{}
%\end{figure}
%We will use geometric properties of Ford circles to prove the theorem.
%Let $R_{a/b}(x)$ be the Euclidean radius of the horocycle based at $x$ which is externally tangent to $C_{a/b}$.
%Then 
%$$\left(R_{a/b}(x)-\frac{1}{2b^2}\right)^2+\left(x-\frac ab\right)^2=\left(R_{a/b}(x)+\frac{1}{2b^2}\right)^2.$$
Given $x\in\bR\setminus\bQ$, let us consider its $n$-th principal convergent $p_n/q_n$ and its $n$-th pseudo-convergent $p''_n/q''_n$. %be the OOCF convergent of $x$. 
Let $a/b\in\Theta(1)$ such that $a/b\not=p_n/q_n$ and $1\le b\le q_n$.
Then 
\begin{equation}\label{Eq:pf1}\text{rad}(C_{p_n/q_n})\le \text{rad}(C_{a/b}).\end{equation}
By \eqref{eq:recursivematrix} and \eqref{eq:recursivematrix2}, the Ford circles $C_{p_n/q_n}$ and $C_{p''_n/q''_n}$ are tangent to each other.
%and Proposition~\ref{Thm:parity_of_convergents}
By Lemma~\ref{Lem:xbtw}, % is an $\infty$-rational  such that  and $x$ is between $p_n/q_n$ and $p''_n/q''_n$. }
 $x$ is between $p_n/q_n$ and $p''_n/q''_n$, then
\begin{equation}\label{Eq:pf3}R_{p_n/q_n}(x)\le \text{rad}(C_{p''_n/q''_n}).\end{equation}
%there is an $\infty$-rational $p''/q''$ which is adjacent to $p/q$ and $x$ is between $p/q$ and $r$.
%Since the radius of $C_w$ is  larger than the radius of $C_u$, $v$ is outside of the interval $[u,v]$.
%If $w$ is on the side of $u$, then $R_w(x)\ge R_u(x)$.
%Assume that $w$ is on the side of $v$.
%For every $z$ between $u$ and $v$, the radius of $C_z$ is at most the radius of $C_u$ since Ford circles do not appear to overlap.
Let $I_n$ be the closed interval of endpoints $p_n/q_n$ and $p''_n/q''_n$.
Since $\text{rad}(C_{r/s})\le \text{rad}(C_{p_n/q_n})$ for any $r/s\in I_n\cap \bQ$, we deduce from \eqref{Eq:pf1} (as shown in Figure \ref{Fg:bestapp_1}) that $a/b\not\in I_n$.
%from $$\text{rad}(C_{a/b})\ge \text{rad}(C_{p_n/q_n}).$$}
%The radius $R_{p/q}(x)$ is at most the radius of $C_r$.
%Since the radius of $C_{a/b}$ is at least the radius of $C_{p/q}$, the $1$-rational ${a/b}$ is outside of the inverval $[p/q,r]$ (as shown in Figure \ref{Fg:bestapp_1}).
Then we have 
\begin{equation}\label{Eq:pf2}\text{rad}(C_{p''_n/q''_n})<R_{a/b}(x).
\end{equation} %is larger than the radius of $C_r$. }
%(The equality holds if and only if $a/b=\infty$.)
By \eqref{Eq:pf3} and \eqref{Eq:pf2}, we have $R_{p_n/q_n}(x) < R_{a/b}(x)$. Hence by \eqref{equivalence:Ford}, $p_n/q_n$ is a best $1$-rational approximation of $x$. 

Conversely, assume that $a/b\in\Theta(1)$ is not a principal convergent of OOCF of $x$.
Then there are consecutive principal convergents $p_{n-1}/q_{n-1}$ and $p_{n}/q_{n}$ such that 
$q_{n-1} \le b < q_{n}$ and $a/b\not=p_{n-1}/q_{n-1}.$
Thus, \begin{equation}\label{Eq:pf4}\text{rad}(C_{{p_{n}}/{q_{n}}})<\text{rad}(C_{a/b}).\end{equation}
%Both are adjacent to $p_{n+1}'/q_{n+1}'$ and $p_{n+1}''/q_{n+1}''$ in the Farey graph, i.e.,
By \eqref{eq:recursivematrix} and \eqref{eq:recursivematrix2}, $C_{{p_{n-1}}/{q_{n-1}}}$ and $C_{{p_{n}}/{q_{n}}}$ are tangent to both $C_{{p_{n}'}/{q_{n}'}}$ and $C_{{p_{n}''}/{q_{n}''}}$.
Without loss of generality, we assume that $p_{n-1}/q_{n-1}<p_{n}'/q_{n}'<p_{n}''/q_{n}''$ (see Figure~\ref{Fg:bestapp_2}).
%\les{Let $I'_n$ be the closed interval between $p'_{n}/q'_{n}$ and $p''_{n}/q''_{n}$.}
%Since \les{$\text{rad}(C_{{p_{n}}/{q_{n}}})<\text{rad}(C_{a/b})$, %$a/b\not\in I'_{n+1}$.}
By \eqref{Eq:pf4}, $a/b\not\in[p'_{n}/q'_{n},p''_{n}/q''_{n}]$.
%$a/b$ is not between $p_{n+1}'/q_{n+1}'$ and $p_{n+1}''/q_{n+1}''$.
Now we will show that $R_{p_{n-1}/q_{n-1}}(x)<R_{a/b}(x)$ which by \eqref{equivalence:Ford}, implies that $a/b$ is not a best $1$-rational approximation of $x$. %, i.e., $a/b$ is not a $1$-rational best approximation.}
%
%\begin{figure}[h]
%\begin{tikzpicture}[scale=1]
%\draw (-2,0) -- (7,0);
%\draw (4.5,0) arc (-90:-20:2.65);
%\draw (4.5,0) arc (-90:-380:2.65);
%\node at (4.5,-0.5) {$\frac {p_{n+1}''}{q_{n+1}''}$};
%\draw (4.5,-0.1) -- (4.5,0.1);
%
%\draw (2.23,0) arc (-90:270:0.48);
%\node at (2.23,-0.5) {$\frac {p_{n+1}'}{q_{n+1}'}$};
%\draw (2.23,-0.1) -- (2.23,0.1);
%
%\filldraw[fill = gray!40] (0.1*5,0) arc (-90:270:1.5);
%\node at (0.1*5,-0.5) {$\frac{p_n}{q_n}$};
%\draw (0.1*5,-0.1) -- (0.1*5,0.1);
%
%\filldraw[fill = gray!40] (2.9,0) arc (-90:270:0.23);
%\node at (3,-0.5) {$\frac {p_{n+1}}{q_{n+1}}$};
%\draw (2.9,-0.1) -- (2.9,0.1);
%
%%\draw (0.25*5,0) arc (-90:270:1.05);
%%\node at (0.25*5,-0.3) {$v$};
%%\draw (0.25*5,-0.1) -- (0.25*5,0.1);
%
%\draw[style = dashed] (3.8,0) arc (-90:270:1.8);
%\draw[style = dashed, very thick] (3.8,0) arc (-90:-140:7.7);
%\node at (-1.9,3) {$C$};
%
%\node at (3.8,-0.4) {$x$};
%\draw (3.8,-0.1) -- (3.8,0.1);
%
%\filldraw[fill = gray!40] (1.6,0) arc (-90:270:0.15);
%\node at (1.6,-0.5) {$\frac{a}{b}$};
%\draw (1.6,-0.1) -- (1.6,0.1);
%
%\end{tikzpicture}
%%\caption{}
%\end{figure}
%
%
We distinguish three cases.
\begin{enumerate}
\item If $a/b  < p_{n-1}/q_{n-1} $, then obviously $R_{p_{n-1}/q_{n-1}}(x)<R_{a/b}(x)$.
%{\color{blue} (SB: I added the explanation:)}
\item Now assume $a/b > p_{n}''/q_{n}''$.
We note that, for $r/s\in\bQ$ and $t,$ $t'\in\bR$, 
\begin{equation}\label{Eq:pf6}|t-r/s|<|t'-r/s|  \text{ implies that }R_{r/s}(t)<R_{r/s}(t').\end{equation}
Then we have
\begin{equation}\label{Eq:pf7}
R_{{p_{n-1}}/{q_{n-1}}}(x) <R_{{p_{n-1}}/{q_{n-1}}}(p_{n}''/q_{n}'') =\text{rad}(C_{p_n''/q_n''}) \le R_{a/b}(p_{n}''/q_{n}'')  <R_{a/b}(x).
\end{equation}
The first and last inequalities in \eqref{Eq:pf7} follow from \eqref{Eq:pf6} and the fact $p_{n-1}/q_{n-1}<x<p''_{n}/q''_{n}<a/b$.
The equality in \eqref{Eq:pf7} holds since $C_{p''_n/q''_n}$ and $C_{p_{n-1}/q_{n-1}}$ are tangent to each other.
The second last inequality in \eqref{Eq:pf7} follows from \eqref{Eq:pf5}.
%\begin{equation}\begin{matrix*}[l]
%R_{{p_{n-1}}/{q_{n-1}}}(x) & <R_{{p_{n-1}}/{q_{n-1}}}(p_{n}''/q_{n}'') & (p_{n-1}/q_{n-1}<x<p''_{n}/q''_{n})\\
%					& =\text{rad}(C_{p_n''/q_n''}) &(\text{$C_{p''_n/q''_n}$ and $C_{p_{n-1}/q_{n-1}}$ are tangent to each other})\\
%					& \le R_{a/b}(p_{n}''/q_{n}'') &dd\\
%					& <R_{a/b}(x) &dd
%\end{matrix*}\end{equation}
%\les{Since \les{$p_{n-1}/q_{n-1}<x<p''_{n}/q''_{n}$}, 
%$$R_{{p_{n-1}}/{q_{n-1}}}(x)<R_{{p_{n-1}}/{q_{n-1}}}(p_{n}''/q_{n}'').$$}
%\les{Since $C_{p''_n/q''_n}$ and $C_{p_{n-1}/q_{n-1}}$ are tangent to each other, $R_{p_{n-1}/q_{n-1}}(p''_n/q''_n)=\text{rad}(C_{p_n''/q_n''})$.}
%\les{By \eqref{Eq:pf5}, $ \text{rad}(C_{p_{n}''/q_{n}''})\le R_{a/b}(p_{n}''/q_{n}'')$.} %(\les{the equality holds} when $C_{p_{n}''/q_{n}''}$ is tangent to $C_{a/b}$).
%\les{Thus, by \eqref{Lem:xbtw}, $R_{a/b}(p_{n}''/q_{n}'')<R_{a/b}(x)$.}
%\don{\sout{and $R_{{p_n}/{q_n}}(x)$ is proportional  to the distance of $x$ and $p_n/q_n$.}}
%Since \les{$x\in [p'_{n}/q'_{n},p''_{n}/q''_{n}]$}, $R_{{p_{n-1}}/{q_{n-1}}}(x)$ is between $R_{{p_{n-1}}/{q_{n-1}}}(p_{n}'/q_{n}')=1/2(q_{n}')^2$ and $R_{{p_{n-1}}/{q_{n-1}}}(p_{n}''/q_{n}'')=1/2(q_{n}'')^2$. 
%Since $x$ is between $p_{n+1}'/q_{n+1}'$ and $p_{n+1}''/q_{n+1}''$, the radius $R_{{p_n}/{q_n}}(x)$ is between $R_{{p_n}/{q_n}}(p_{n+1}'/q_{n+1}')=1/2(q_{n+1}')^2$ and $R_{{p_n}/{q_n}}(p_{n+1}''/q_{n+1}'')=1/2(q_{n+1}'')^2$. % since $R_{{p_n}/{q_n}}(x)$ is proportional to the distance of $x$ and $p_n/q_n$. 
%{\color{red} (please explain).}
%Thus, $R_{p_{n-1}/q_{n-1}}(x)<R_{a/b}(x)$.

\item Finally, let $a/b \in (p_{n-1}/q_{n-1},p_{n}'/q_{n}')$.
Denote by $C$ and $C'$ the horocycles based at $x$ tangent to $C_{a/b}$ and $C_{p_{n-1}/q_{n-1}}$, respectively (see Figure~\ref{Fg:bestapp_2}).
%(The thick dashed arc in Figure~\ref{Fg:bestapp_2} represents a part of $C_1$ and the thin dashed circle.)
Since the tangent point of $C$ and $C_{a/b}$ is an interior point of the area bounded by $C_{p_{n-1}/q_{n-1}}$, $C_{p_{n}'/q_{n}'}$ and the real line, we conclude that $C$ intersects $C_{p_{n-1}/q_{n-1}}$.
Thus, $C$ is larger than $C'$, i.e, $R_{p_{n-1}/q_{n-1}}(x)<R_{a/b}(x)$. \qedhere
\end{enumerate}
%Thus, for all the cases, $R_{a/b}(x)>R_{p_n/q_n}(x)$, i.e., $a/b$ is not a $1$-rational best approximation.
%If $R_w(x)\le R_{\frac{p_n}{q_n}}(x)$, then $w$ is between $p_{n+1}'/q_{n+1}'$ and $p_{n+1}''/q_{n+1}''$.
%We have $R_{\frac{p_n}{q_n}}(x)\le R_u(x)$.
%Thus $u$ is not a best approximating $1$-rational.
\end{proof}

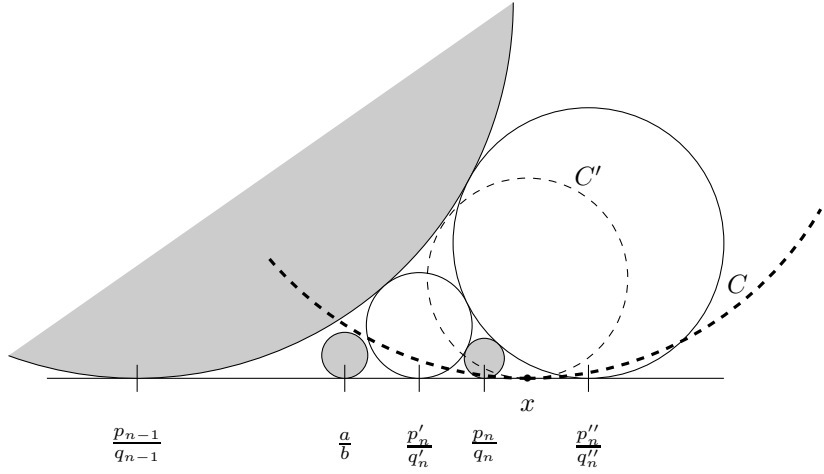
\begin{figure}[t]
\begin{tikzpicture}[scale=1.8]

\draw (0,0) -- (5,0);

%\draw (0.25*5,0) arc (-90:270:1.05);
%\node at (0.25*5,-0.3) {$v$};
%\draw (0.25*5,-0.1) -- (0.25*5,0.1);

%\filldraw[fill =white] (-16+50*1/3,0) arc (-90:-120:50*1/3*1/3*0.5);
\filldraw[fill = gray!40](-16+50*1/3+50*1/3*1/3*0.5,50*1/3*1/3*0.5) arc (0:-110:50*1/3*1/3*0.5);  \node at (-16+50*1/3,-0.5) {$\frac{p_{n-1}}{q_{n-1}}$}; \draw (-16+50*1/3,-0.1) -- (-16+50*1/3,0.1);
\hocirr{3}{8}{white} \node at (-16+50*3/8,-0.5) {$\frac {p_{n}'}{q_{n}'}$}; \draw (-16+50*3/8,-0.1) -- (-16+50*3/8,0.1);
\hocirr{2}{5}{white}  \node at (-16+50*2/5,-0.5) {$\frac {p_{n}''}{q_{n}''}$}; \draw (-16+50*2/5,-0.1) -- (-16+50*2/5,0.1); 
\hocirr{5}{13}{gray!40} \node at (-16+50*5/13,-0.5) {$\frac {p_{n}}{q_{n}}$}; \draw (-16+50*5/13,-0.1) -- (-16+50*5/13,0.1);

\draw[style = dashed] (3.55,0) arc (-90:270:0.74);
\draw[style = dashed, very thick] (3.55,0) arc (-90:-140:2.5);
\draw[style = dashed, very thick] (3.55,0) arc (-90:-30:2.5);
\node at (5.1,0.7) {$C$};
\node at (4, 1.5) {$C'$};

\node at (3.55,-0.2) {$x$};
\node at (3.55,0) {\tiny$\bullet$};

%\draw (3.8,-0.1) -- (3.8,0.1);

\filldraw[fill = gray!40] (2.2,0) arc (-90:270:0.17);
\node at (2.2,-0.5) {$\frac{a}{b}$};
\draw (2.2,-0.1) -- (2.2,0.1);

\end{tikzpicture}
\caption{A possible relative position of $x$, $a/b$ and the convergents. The dashed circles $C$ and $C'$ are horocycles based at $x$ tangent to $C_{a/b}$ and $C_{p_{n-1}/q_{n-1}}$.}
\label{Fg:bestapp_2}
\end{figure}

\section{Relation with the regular continued fraction}\label{Sec:RCF}
For simplicity, we denote a RCF as in \eqref{Eq:RCF} by $[d_0;d_1,d_2,\cdots,d_j,\cdots]$.
%$$[d_0;d_1,d_2,\cdots,d_j,\cdots] \quad\quad~\orr~\quad [d_0;d_1,d_2,\cdots,d_n].$$
For $1\le j \le d_{n}$ and for $n\ge 1$, the fractions
$$\frac{p^R_{n,j}}{q^R_{n,j}}=\frac{p^R_{n-2}+jp^R_{n-1}}{q^R_{n-2}+jq^R_{n-1}}$$ 
are called \emph{the intermediate convergents} %which are fractions $p/q$ satisfying the following Diophantine property
%$$|x-p/q|<|x-a/b| ~\forr~\any~a/b\not=p/q ~\st~ 0<q\le b,$$
%then $p/q$ is a intermediate convergents, but The converse does not hold
 (see \cite[Section 6]{Khi63} and \cite[p.36]{RoSz92}).
Kraaikamp and Lopes \cite{KrLo96} showed that the convergents of EICF are intermediate convergents of RCF.
In this section, we show that the OOCF principal convergents are also intermediate convergents of RCF.

The following lemma tells us how the piecewise inverses of OOCF act on RCF expansions.

\begin{lemma}\label{Lem:ChangeRCF}
Let $x=[0;d_1,d_2,\cdots]$.
Then, the RCF expansion of $f_{(a,\veps)}(x)$ is as follows:
$$f_{(a,\veps)}(x)=\left\{\begin{matrix*}[l]
[0;2,d_1,d_2,\cdots] &\If~ \veps=1,~a=1,\\
[0;1,(a-1),1,d_1,d_2,\cdots] &\If~\veps=1,~a\ge 2,\\
[0;(d_1+2),d_2,\cdots] &\If~\veps=-1,~a=2,\\
[0;1,(a-1),(d_1+1),d_2,\cdots] &\If~\veps=-1,~a\ge 3 .
\end{matrix*}\right.
$$
\end{lemma}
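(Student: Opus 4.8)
The plan is to compute $f_{(a,\veps)}(x)$ directly from the explicit formula \eqref{Eq:inverse}, namely $f_{(a,\veps)}(x) = 1 - \cfrac{1}{a + \cfrac{\veps}{1+x}}$, and then massage the resulting expression into a valid RCF by using the standard continued fraction identities. Since $x = [0;d_1,d_2,\dots]$ means $x = \cfrac{1}{d_1 + \cfrac{1}{d_2 + \ddots}}$, I would substitute and simplify $1+x$ and then $\cfrac{\veps}{1+x}$, aiming to express $f_{(a,\veps)}(x)$ as a genuine RCF with all partial quotients in $\bN$. The four cases in the statement correspond exactly to the four admissible pairs $(a,\veps)$ with the parity constraints noted after \eqref{eq:iteration} (recall $\veps = 1$ is forced when $a=1$), so I would organize the proof as a case analysis on the sign of $\veps$ and the size of $a$.

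First I would handle $\veps = 1$. Here $f_{(a,1)}(x) = 1 - \cfrac{1}{a + \frac{1}{1+x}}$. Writing $1+x = [1;d_1,d_2,\dots]$, the inner term $a + \frac{1}{1+x}$ is $[a;1,d_1,d_2,\dots]$, and the key is to rewrite $1 - \frac{1}{[a;1,d_1,\dots]}$ as a legitimate RCF beginning with $0$. The identity $1 - \frac1y = \cfrac{1}{1 + \frac{1}{y-1}}$ (valid for $y > 1$) reduces the problem to expanding $y - 1$ where $y = a + \frac{1}{1+x}$. When $a = 1$ this gives $y - 1 = \frac{1}{1+x}$, so $1 - \frac1y = \cfrac{1}{1 + (1+x)} = \cfrac{1}{2+x}$, which is $[0;2,d_1,d_2,\dots]$ after absorbing $x = [0;d_1,\dots]$ into the tail. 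When $a \ge 2$, $y - 1 = (a-1) + \frac{1}{1+x}$ is $\ge 1$ and expands as $[(a-1);1,d_1,d_2,\dots]$, yielding $f_{(a,1)}(x) = [0;1,(a-1),1,d_1,d_2,\dots]$, which is exactly the claimed form.

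Next I would treat $\veps = -1$, where $f_{(a,-1)}(x) = 1 - \cfrac{1}{a - \frac{1}{1+x}}$. The subtraction is the delicate point: $a - \frac{1}{1+x}$ must be rewritten so that the continued fraction stays in standard form with positive integer entries. I would use $\cfrac{1}{1+x} = \cfrac{1}{1 + [0;d_1,\dots]}$ and the identity $a - \cfrac{1}{1+x} = (a-1) + \cfrac{x}{1+x} = (a-1) + \cfrac{1}{1 + \frac1x}$, noting $\frac1x = [d_1;d_2,\dots]$ so $1 + \frac1x = [(d_1+1);d_2,\dots]$. When $a = 2$ the leading $(a-1)$ equals $1$ and merges with the next partial quotient via $1 + \cfrac{1}{[(d_1+1);\dots]}$, collapsing to give the $[0;(d_1+2),d_2,\dots]$ case; when $a \ge 3$ no such collapse occurs and one reads off $[0;1,(a-1),(d_1+1),d_2,\dots]$, again after applying the $1 - \frac1y$ reduction at the outer level.

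The main obstacle I expect is bookkeeping the boundary collapses: the identities $[\dots;c,1,e,\dots]$-type mergers and the fact that a partial quotient equal to $1$ at the front must be absorbed rather than left standing (an RCF with $d_1 = \dots$ must have all $d_j \ge 1$, and one must avoid spurious leading or trailing $1$'s or zero partial quotients). Concretely, the $a=1$ versus $a\ge 2$ split for $\veps=1$, and the $a=2$ versus $a\ge 3$ split for $\veps=-1$, both arise precisely from whether an intermediate partial quotient degenerates to $1$ and triggers a merge. I would therefore verify each case by directly converting both sides to the matrix product of \eqref{eicf}-type factors $\begin{pmatrix} d & 1 \\ 1 & 0\end{pmatrix}$ as a consistency check, since equality of the associated linear fractional transformations is easiest to certify at the matrix level; the uniqueness of the RCF expansion for irrational $x$ then forces the partial quotients to match.
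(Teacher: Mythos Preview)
Your proposal is correct and follows essentially the same approach as the paper: both start from the formula $f_{(a,\veps)}(x) = 1 - \dfrac{1}{a + \frac{\veps}{1+x}}$, split first on the sign of $\veps$ and then on whether the leading partial quotient degenerates (the $a=1$ vs.\ $a\ge 2$ and $a=2$ vs.\ $a\ge 3$ subcases), and reduce everything to standard RCF form via the identities $1 - \frac1y = \frac{1}{1+\frac{1}{y-1}}$ and $1 - \frac{1}{1+x} = \frac{1}{1+\frac1x}$. Your additional suggestion to cross-check via matrix products is a reasonable sanity check but is not needed for the argument.
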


\begin{proof}

If $\veps=1$, then $$f_{(a,\veps)}(x)=1-[0;a,1,d_1,d_2,\cdots]=
\left\{\begin{matrix*}[l]
[0;2,d_1,d_2,\cdots] &\If~a=1,\\
[0;1,a-1,1,d_1,\cdots] &\If~a\ge 2.
\end{matrix*}\right.$$
If $\veps=-1$, then
\begin{equation*}\begin{split}
f_{(a,\veps)}(x) & =1-\cfrac{1}{a-[0;1,d_1,d_2,\cdots]}=1-\cfrac{1}{(a-1)+1-[0;1,d_1,d_2,\cdots]}\\
& =1-[0;a-1,d_1+1,d_2,\cdots]=
\left\{\begin{matrix*}[l]
[0;d_1+2,d_2,\cdots] &\If~a=2,\\
[0;1,a-1,1,d_1+1,d_2,\cdots] &\If~a\ge 3.
\end{matrix*}\right. \qedhere
\end{split}\end{equation*}
\end{proof}

Applying Lemma \ref{Lem:ChangeRCF}, we have the following theorem.
\begin{theorem}\label{Thm:intermediate}
The OOCF principal convergents of $x$ are intermediate convergents of $x$.
\end{theorem}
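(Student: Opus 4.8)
The plan is to prove Theorem~\ref{Thm:intermediate} by induction on $n$, tracking how each OOCF principal convergent $p_n/q_n$ sits inside the RCF expansion of $x$. The key tool is Lemma~\ref{Lem:ChangeRCF}, which tells us exactly how a single piecewise inverse $f_{(a,\veps)}$ modifies an RCF expansion. Recall from \eqref{Eq:inverse} and \eqref{eq:recursivematrix} that the principal convergent $p_n/q_n$ is the image of $1$ under the composition $f_{(a_1,\veps_1)}\circ\cdots\circ f_{(a_n,\veps_n)}$, and that the tail $\zeta_{n+1}=\TO^n(x)$ satisfies $x = f_{(a_1,\veps_1)}\circ\cdots\circ f_{(a_n,\veps_n)}(\zeta_{n+1})$. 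The idea is that truncating the OOCF at level $n$ corresponds to replacing $\zeta_{n+1}$ by $1$ (since the principal convergent is $[\![(a_1,\veps_1),\dots,(a_n,\veps_n)]\!]$, and the innermost block evaluates $a_n+\veps_n/2$, i.e.\ the value at $\zeta=1$). So I would express $p_n/q_n$ as the RCF obtained by feeding a specific terminal value through the same sequence of substitutions that produces the RCF of $x$.

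First I would set up the inductive framework: write $\eta_n := \TO^{n}(x)$ with RCF expansion $\eta_n = [0;d_1^{(n)},d_2^{(n)},\dots]$, and observe that applying $f_{(a_n,\veps_n)}$ to $\eta_n$ recovers $\eta_{n-1}$, with the RCF digits transforming according to the four cases of Lemma~\ref{Lem:ChangeRCF}. The principal convergent $p_n/q_n$ arises by the same procedure but starting from the terminal rational $\zeta=1$ at the innermost stage instead of continuing with $\eta_{n+1}$; concretely, $p_n/q_n = f_{(a_1,\veps_1)}\circ\cdots\circ f_{(a_n,\veps_n)}(1)$. The crux is to verify that this finite RCF shares a common prefix of RCF digits with the full expansion of $x$, and that the point where they diverge is precisely an intermediate-convergent stage of $x$, i.e.\ $p_n/q_n = (p^R_{m-2}+j\,p^R_{m-1})/(q^R_{m-2}+j\,q^R_{m-1})$ for suitable $m$ and $1\le j\le d_m$.

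The main step is to show that each application of $f_{(a_n,\veps_n)}$ preserves the property of being an intermediate convergent while prepending the RCF digits dictated by Lemma~\ref{Lem:ChangeRCF}. Since intermediate convergents of $x$ are exactly the fractions $(p^R_{m-2}+jp^R_{m-1})/(q^R_{m-2}+jq^R_{m-1})$ generated by the RCF recursion, and since Lemma~\ref{Lem:ChangeRCF} shows that $f_{(a,\veps)}$ acts on the leading RCF digit either by inserting digits $1,(a-1),1$ or $2$ at the front or by incrementing $d_1$, I would track how these operations translate into the mediant/intermediate structure. The matrix formalism of Lemma~\ref{Lem:generalrecursive} and equation~\eqref{eq:recursivematrix} is the cleanest bookkeeping device here: the product $A_{(a_1,\veps_1)}\cdots A_{(a_n,\veps_n)}$ is a product of the elementary $\slz$-matrices $\begin{pmatrix}1&0\\1&1\end{pmatrix}$ and $\begin{pmatrix}1&1\\0&1\end{pmatrix}$ corresponding to RCF digits, so that $p_n/q_n$ is automatically of the intermediate-convergent form once we read off the exponents.

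I expect the main obstacle to be the careful bookkeeping of the boundary cases in Lemma~\ref{Lem:ChangeRCF}, especially the digit-incrementing cases ($\veps=-1$) where $f_{(a,\veps)}$ modifies $d_1$ rather than purely prepending, since these can cause a collision or merging of RCF digits that must be shown not to break the intermediate-convergent structure. The parameter $j$ in the intermediate convergent must be shown to lie in the admissible range $1\le j\le d_m$, and verifying the endpoint behaviour (e.g.\ when $a_n=1$ forcing $\veps_n=1$, or when a digit increment pushes $j$ to the boundary $d_m$ and the next OOCF step starts a fresh RCF digit) is where the argument is most delicate. Once the elementary-matrix translation is established, the induction closes and every $p_n/q_n$ is exhibited as an intermediate convergent of $x$.
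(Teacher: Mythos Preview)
Your proposal is correct and follows essentially the same approach as the paper: both exploit that $x$ and $p_k/q_k$ arise as $f_{(a_1,\veps_1)}\circ\cdots\circ f_{(a_k,\veps_k)}$ applied to $\zeta_{k+1}$ and to $1$ respectively, and then use Lemma~\ref{Lem:ChangeRCF} to conclude that their RCF expansions share a common prefix apart from the last partial quotient of $p_k/q_k$. The paper's proof is considerably shorter than what you outline, because it does not set up an explicit induction, does not invoke the matrix factorisation, and does not separately analyse the boundary cases of Lemma~\ref{Lem:ChangeRCF}: once one observes that every case of that lemma only \emph{prepends} digits or increments the leading digit, the ``same prefix except the last digit'' conclusion is immediate, and a fraction whose RCF agrees with $x$ up to a possibly smaller final digit is by definition an intermediate convergent.
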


\begin{proof}
Let $x=[0;  d_1,d_2,\cdots]=[\![(a_1,\veps_1),(a_2,\veps_2),\cdots]\!]$.
%Then,
%$$f_{(a,\veps)}(x)=\begin{cases}
%[2,A_1,A_2,\cdots]~\If~ \veps=1,~a=1\\
%[1,(a-1),1,A_1,A_2,\cdots]~\If~\veps=1,~a\ge 2\\
%[A_1+2,A_2,\cdots]~\If~\veps=-1,~a=2\\
%[1,a-1,A_1+1,A_2,\cdots]~\If~\veps=-1,~a\ge 3 .
%\end{cases}
%$$
%The map $f_{(a,\veps)}$ is in (\ref{Eq:inverse}).
Note that $$x=f_{(a_1,\veps_1)}\circ f_{(a_2,\veps_2)}\circ \cdots \circ f_{(a_k,\veps_k)}([\![(a_{k+1},\veps_{k+1}),\cdots]\!])
\quad\text{and}\quad \frac{p_k}{q_k}=f_{(a_1,\veps_1)}\circ f_{(a_2,\veps_2)}\circ \cdots \circ f_{(a_k,\veps_k)} (1).$$
By Lemma~\ref{Lem:ChangeRCF}, $x$ and $p_k/q_k$ have the same prefix in their RCF expansions, except for the last partial quotient of $p_k/q_k$.
Thus, $p_k/q_k$ is an intermediate convergent of $x$.
\end{proof}

Next, we show that we can convert RCF expansions into OOCF expansions. Before we state the theorem, let us introduce the following notations:
$$\left\{\begin{matrix*}[l]
x=[0;d_1,\cdots,d_n,\tau]  &\text{ if } \  G^n(x)=\tau, \\
%$$\cfrac{1}{A_1+\cfrac{1}{\ddots+\cfrac{1}{A_n+\gamma}}}.$$
x=[\![(a_1,\veps_1),(a_2,\veps_2),\cdots,(a_n,\veps_n),\gamma]\!]  &\text{ if } \ \TO^n(x)=\gamma.
%$$
%x = 1 - \cfrac{1}{a_1 + \cfrac{\veps_1}{ 2 - \cfrac{1}{\ddots + \cfrac{\ddots}{ 2 - \cfrac{1}{a_n + \cfrac{\veps_n}{1+\gamma}}}}}},
%$$
\end{matrix*}\right.
$$

%\begin{theorem}
%If $A_1=1$, then
%%$B(k+1,-1)=\cup_{n\ge 2}[0;1,k-1,n]$ and $B(k,1)=[0;1,k-1,1]$
%$$
%[0;A_1,A_2,\alpha]=
%\begin{cases}
%[\![(A_2+1,1),F(\alpha)]\!], \text{ if }\alpha\in[\frac{1}{2},1),\\
%[\![(A_2+2,-1),F(\alpha)]\!], \text{ if }\alpha\in[0,\frac{1}{2}).\\ %\frac{1-\alpha}{\alpha}
%\end{cases}
%$$
%
%If $A_1\ge 2$, then
%\begingroup
%$$
%[0;A_1,\beta]=\left\{\vphantom{\begin{array}{@{}c@{}} \strut \\ \strut\end{array}}\right.\kern-\nulldelimiterspace
%\begin{array}{@{}l@{}}
%[\![\overbrace{(2,-1),\cdots,(2,-1)}^{\left(\frac{A_1-1}{2}\right)-\text{times}},\frac{1}{1+\beta}]\!] \text{ if } A_1 \text{ is odd, } \\
%{[\![}\underbrace{(2,-1),\cdots,(2,-1)}_{\left(\frac{A_1}{2}-1\right)-\text{times}},(1,1),{\beta}]\!] \text{ if } A_1 \text{ is even. }
%\end{array}
%%\begin{array}{@{}c@{}}
%%[\![\overbrace{(2,-1),\cdots,(2,-1)}^{\left(\frac{A_1-1}{2}\right)-\text{times}},\frac{1}{1+\beta}]\!] \text{ if } A_1 \text{ is odd, }\\
%%[\![\underbrace{(2,-1),\cdots,(2,-1)}_{\left(\frac{A_1}{2}-1\right)-\text{times}},(1,1),{\beta}]\!] \text{ if } A_1 \text{ is even. }
%%\end{array}
%$$
%\endgroup
%\end{theorem}

\begin{theorem}\label{Thm:convert}
We can convert RCF expansions into OOCF expansions by the following relations:
\begingroup
$$
x = [0; d_1, d_2,\tau]= %\left\{\vphantom{\begin{array}{@{}c@{}} \strut \\ \strut \\ \strut \end{array}}\right.\kern-\nulldelimiterspace
%\begin{array}{@{}l@{}l}
\left\{\begin{matrix*}[l]
[\![{(2,-1)}^{\frac{d_1-1}{2}},(d_2+1,1),F(\tau)]\!]  &\text{ if } \ d_1 \text{ is odd and }\tau\in[\frac{1}{2},1), \\
{[\![}{(2,-1)}^{\frac{d_1-1}{2}},(d_2+2,-1),F(\tau)]\!] & \text{ if } \  d_1 \text{ is odd and }
\tau\in[0,\frac{1}{2}), \\
{[\![}{(2,-1)}^{\frac{d_1}{2}-1},(1,1),G(x)]\!]  & \text{ if } \ d_1 \text{ is even. }
\end{matrix*}\right.
%\end{array}
$$
\endgroup

%\begingroup
%$$
%x = [0;A_1,A_2,\alpha]=\left\{\vphantom{\begin{array}{@{}c@{}} \strut \\ \strut \\ \strut \end{array}}\right.\kern-\nulldelimiterspace
%\begin{array}{@{}l@{}}
%[\![\overbrace{(2,-1),\cdots,(2,-1)}^{\left(\frac{A_1-1}{2}\right)-\text{times}},(A_2+1,1),F(\alpha)]\!] \text{ if } A_1 \text{ is odd and }\alpha\in[\frac{1}{2},1), \\
%{[\![}\overbrace{(2,-1),\cdots,(2,-1)}^{\left(\frac{A_1-1}{2}\right)-\text{times}},(A_2+2,-1),F(\alpha)]\!] \text{ if } A_1 \text{ is odd and }
%\alpha\in[0,\frac{1}{2}), \\
%{[\![}\underbrace{(2,-1),\cdots,(2,-1)}_{\left(\frac{A_1}{2}-1\right)-\text{times}},(1,1),G(x)]\!] \text{ if } A_1 \text{ is even. }
%\end{array}
%$$
%\endgroup
\end{theorem}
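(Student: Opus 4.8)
The plan is to peel off the leading OOCF digits of $x$ directly from its RCF and then to read off the first nontrivial digit together with its $\TO$-image from the defining relation \eqref{iteration:1and2}. The peeling engine is the case $a=2$, $\veps=-1$ of Lemma~\ref{Lem:ChangeRCF}, which gives $f_{(2,-1)}([0;d_1,d_2,\dots])=[0;d_1+2,d_2,\dots]$. Since $f_{(2,-1)}$ is the inverse of $\TO|_{B(2,-1)}$, this says that $\TO$ sends $[0;d_1,d_2,\dots]$ to $[0;d_1-2,d_2,\dots]$ whenever $d_1\ge 3$. For irrational arguments, membership $y\in B(2,-1)=(0,1/3)$ is exactly the condition $d_1(y)\ge 3$, so by \eqref{Eq:letter} the leading OOCF digit of $y$ is $(2,-1)$ precisely when $d_1(y)\ge 3$. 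I would therefore strip the digit $(2,-1)$, lowering the first RCF digit by $2$ at each step, and stop when this digit first reaches $1$ or $2$.

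This produces the parity split. If $d_1$ is odd I strip $(d_1-1)/2$ copies of $(2,-1)$ and land on $y:=\TO^{(d_1-1)/2}(x)=[0;1,d_2,d_3,\dots]\in(1/2,1)$; if $d_1$ is even I strip $d_1/2-1$ copies and land on $y:=\TO^{d_1/2-1}(x)=[0;2,d_2,d_3,\dots]\in(1/3,1/2)=B(1,1)$. A short check confirms that the first RCF digit of every intermediate iterate is indeed $\ge 3$, so each stripped digit really is $(2,-1)$, and that it drops to exactly the advertised value $1$ or $2$ at the final step.

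It remains to analyze $y$ via \eqref{iteration:1and2}, writing $s:=(2-(1-\TO y))^{-1}=(1+\TO y)^{-1}$. In the even case $y\in B(1,1)$ gives the digit $(1,1)$, and computing $1/(1-y)=1+(1+G(x))^{-1}$ and matching it against the $B(1,1)$ form $1+s$ forces $\TO y=G(x)$, which is the third line of the statement. In the odd case I compute $1/(1-y)=(d_2+1)+\tau$ with $\tau=G^2(x)$; comparing with the two branches of \eqref{iteration:1and2}, whose values fill $[k,k+\tfrac12]$ on $B(k+1,-1)$ and $[k+\tfrac12,k+1]$ on $B(k,1)$, the threshold $\tau=\tfrac12$ decides the branch: $\tau\in[\tfrac12,1)$ puts $y\in B(d_2+1,1)$ with digit $(d_2+1,1)$, while $\tau\in[0,\tfrac12)$ puts $y\in B(d_2+2,-1)$ with digit $(d_2+2,-1)$. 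Solving $s=\tau$, respectively $s=1-\tau$, for $\TO y$ yields $\TO y=(1-\tau)/\tau$, respectively $\TO y=\tau/(1-\tau)$, and these are exactly the two branches of the Farey map, so $\TO y=F(\tau)$ in both subcases.

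The bookkeeping for the peeling is routine. The one place needing care is the odd case, where one must match the single number $(d_2+1)+\tau$ against the two piecewise forms of \eqref{iteration:1and2} and verify both that the cutoff sits exactly at $\tau=\tfrac12$ and that the resulting value of $\TO y$ lands on the correct branch of $F$. Once these are checked, reading the digits off the successive $\TO$-images gives the three displayed conversions.
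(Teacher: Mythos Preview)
Your proof is correct and follows essentially the same approach as the paper's own proof: both peel off the leading $(2,-1)$ digits by observing that $\TO$ lowers the first RCF partial quotient by $2$ when $d_1\ge 3$, then handle the two base cases $d_1=1$ and $d_1=2$ by directly matching $1/(1-y)$ against the defining relation \eqref{iteration:1and2} and identifying the resulting $\TO y$ as $F(\tau)$ or $G(x)$. The only cosmetic differences are that the paper treats the base cases first and derives the peeling step by an explicit computation (its equation \eqref{Eq:5.3}) rather than by citing Lemma~\ref{Lem:ChangeRCF}, and it presents the odd case slightly more tersely.
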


\begin{proof}
%Let $x=[0;1,A_2,\alpha]$.
Noting 
%\begin{equation*}\label{Eq:trans}
$\frac{1}{1+\tau}=1-\frac{1}{1+\frac{1}{\tau}}$,
%\end{equation*}
we have
$$[0;1,d_2,\tau]=\frac{1}{1+\frac{1}{d_2+\tau}}=1-\frac{1}{(d_2+1)+\tau}.$$
%Since $\tau=\frac{1}{[\tau^{-1}]+G(\tau)}$, 
If $\tau\in[\frac{1}{2},1)$, then $\tau=\frac{1}{1+G(\tau)}$.
If $\tau\in[0,\frac{1}{2})$, then $$\tau=1-\frac{1}{1+\frac{1}{[\tau^{-1}]-1+G(\tau)}}.$$
Thus $F(\tau)=\frac{1}{[\tau^{-1}]-1+G(\tau)}$ if $\tau\in[0,\frac{1}{2})$ and $F(\tau)=G(\tau)$ if $\tau\in(\frac12,1)$.
In the case of $d_1=1$, we have
\begin{equation}\label{Eq:5.3_1}
[0;1,d_2,\tau]=
\left\{\begin{matrix*}[l]
[\![(d_2+1,1),F(\tau)]\!],  &\text{ if }\tau\in[\frac{1}{2},1),\\
[\![(d_2+2,-1),F(\tau)]\!],  &\text{ if }\tau\in[0,\frac{1}{2}).\\ %\frac{1-\alpha}{\alpha}
\end{matrix*}\right.
\end{equation}
Similarly,  if $d_1=2$, then
\begin{equation}\label{Eq:5.3_2}x= [0;2,G(x)]=\frac{1}{2+G(x)}=1-\frac{1}{1+\frac{1}{1+G(x)}}=[\![(1,1),G(x)]\!].
\end{equation}
%and
%$$[0;3,\beta]=\frac{1}{3+\beta}=1-\frac{1}{2-\frac{1}{1+\frac{1}{1+\beta}}}=[\![(2,-1),\frac{1}{1+\beta}]\!].$$
If $d_1\ge 3$, i.e., $x\in(0,\frac13)$, then 
\begin{equation}\label{Eq:5.3}x=[0;d_1,G(x)]=\frac{1}{d_1+G(x)}=1-\frac{1}{1+\frac{1}{1+(d_1-2)+G(x)}}=1-\frac{1}{2-\frac{1}{1+\frac{1}{(d_1-2)+G(x)}}}=[\![(2,-1),\TO(x)]\!].
\end{equation}
Note that there are $n\in\bN$ and $r\in \{0,1\}$ such that $d_1-1 = 2n+r$. Since $\TO(x)=[0;d_1-2,G(x)]$,
by repeating the process in \eqref{Eq:5.3}, we have 
$$[0;d_1,G(x)] = [\![(2,-1)^{n}, \TO^{n}(x)]\!]\quad\andd\quad { \TO^{n}(x) = [0; r+1 ,G(x)]} .$$
Then we can complete the proof by \eqref{Eq:5.3_1} and \eqref{Eq:5.3_2}.
%$$[0;3,\beta]=\frac{1}{3+\alpha}=1-\frac{1}{2-\frac{1}{1+\frac{1}{1+\alpha}}}=[\![(2,-1),\frac{1}{1+\alpha}]\!]$$
%and
%$$[0;4,\beta]=\frac{1}{4+\alpha}=1-\frac{1}{2-\frac{1}{1+\frac{1}{2+\alpha}}}=[\![(2,-1),\frac{1}{2+\alpha}]\!]$$
\end{proof}

%\begin{corollary}
%Note that if $x\in[1/2,1)$, then $F \circ G^2(x)=\TO(x)$.
%If $x \in [0,1/2)$, then $G(x) = G \circ \TO^{[\frac{d_1-1}{2}]}(x)$.
%\end{corollary}

Since $p^R_n/q^R_n$ is a best approximation, if $p^R_n/q^R_n$ is a $1$-rational, then $p^R_n/q^R_n$ is a best $1$-rational approximation. Thus, by Theorem~\ref{Thm:main1}, $p^R_n/q^R_n$ is an OOCF convergent.
Now we check when an intermediate convergent is an OOCF convergent.
Keita  \cite{Kei17} proved the following propostion.
\begin{proposition*}[Keita, Proposition 1.2 in \cite{Kei17}]
We have
$$q^R_{n,0}=q^R_{n-2}<q^R_{n-1}\le q^R_{n,1}<\cdots <q^R_{n,d_n}=q^R_{n},$$
$$|q^R_{n,d_n}x-p^R_{n,d_n}|=|q^R_{n}x-p^R_{n}|<|q^R_{n-1}x-p^R_{n-1}|\le |q^R_{n,d_n-1}x-p^R_{n,d_n-1}|<\cdots <|q^R_{n,0}x-p^R_{n,0}|=|q^R_{n-2}x-p^R_{n-2}|.$$
\end{proposition*}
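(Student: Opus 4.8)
The plan is to reduce both inequality chains to a single pair of affine identities for intermediate convergents together with three textbook facts about the RCF. Writing $\theta_k := q^R_k x - p^R_k$ for the signed RCF errors and $\theta_{n,j} := q^R_{n,j}x - p^R_{n,j}$, the defining formula $p^R_{n,j}/q^R_{n,j}=(p^R_{n-2}+jp^R_{n-1})/(q^R_{n-2}+jq^R_{n-1})$ gives, for $0\le j\le d_n$,
\[
q^R_{n,j}=q^R_{n-2}+j\,q^R_{n-1}, \qquad \theta_{n,j}=\theta_{n-2}+j\,\theta_{n-1},
\]
both affine in $j$, with endpoints $q^R_{n,0}=q^R_{n-2}$, $q^R_{n,d_n}=q^R_n$ and $\theta_{n,0}=\theta_{n-2}$, $\theta_{n,d_n}=\theta_n$ by the RCF recursion. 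First I would recall the three classical facts for the RCF of an irrational: (i) the denominators satisfy $q^R_{-1}=0<q^R_0=1$ and $q^R_{k-1}<q^R_k$ for $k\ge 2$; (ii) the sign rule $\operatorname{sign}(\theta_k)=(-1)^k$; and (iii) $|\theta_k|$ strictly decreases in $k$.

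For the denominator chain I would use that $j\mapsto q^R_{n,j}$ has positive slope $q^R_{n-1}$, hence is strictly increasing, giving $q^R_{n,0}<q^R_{n,1}<\cdots<q^R_{n,d_n}$ with the stated endpoints. To slot $q^R_{n-1}$ into the chain I would use $q^R_{n,1}=q^R_{n-2}+q^R_{n-1}$, so $q^R_{n-1}\le q^R_{n,1}$ with equality precisely when $q^R_{n-2}=0$; together with $q^R_{n-2}<q^R_{n-1}$ from (i) this yields the whole chain.

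For the error chain the decisive point is that $\theta_{n-2}$ and $\theta_{n-1}$ have opposite signs by (ii). Since $j\mapsto\theta_{n,j}$ is affine, hence monotone, and its two endpoints $\theta_{n,0}=\theta_{n-2}$ and $\theta_{n,d_n}=\theta_n$ share the sign $(-1)^n$, every intermediate value lies between them and so keeps that sign; the value thus moves monotonically toward $0$, and because irrationality forbids $\theta_{n,j}=0$ it never overshoots, so $|\theta_{n,j}|$ strictly decreases from $|\theta_{n-2}|$ to $|\theta_n|$. To interleave $|\theta_{n-1}|$ I would compute $\theta_{n,d_n-1}=\theta_n-\theta_{n-1}$, and since these have opposite signs this gives $|\theta_{n,d_n-1}|=|\theta_n|+|\theta_{n-1}|>|\theta_{n-1}|$, while $|\theta_n|<|\theta_{n-1}|$ is just (iii); combining produces the displayed chain.

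Because this is Keita's result, the genuine content is light and the only step I expect to require care is the one-signedness of $\theta_{n,j}$, which is exactly what upgrades the affine monotonicity of $\theta_{n,j}$ into the monotone decay of $|\theta_{n,j}|$; I would derive it from the matching endpoint signs via (ii) together with irrationality, which forces the decay to be strict. The remaining fuss is the exact status of the two weak inequalities: the weak sign in the denominator chain is genuine and sharp only at the degenerate index $n=1$ (where $q^R_{n-2}=0$), whereas the weak sign in the error chain is in fact always strict since $|\theta_n|>0$; I would simply track the small-index cases $n=1,2$ (and $d_1=1$, where $q^R_0=q^R_1$) rather than argue them separately.
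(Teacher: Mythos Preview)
The paper does not give its own proof of this proposition: it is quoted verbatim as Keita's Proposition~1.2 and used without argument. So there is nothing to compare against on the paper's side.

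Your argument is correct and is the standard elementary one. The two affine identities $q^R_{n,j}=q^R_{n-2}+j\,q^R_{n-1}$ and $\theta_{n,j}=\theta_{n-2}+j\,\theta_{n-1}$, together with the three textbook facts you list, yield both chains exactly as you describe. The only place to tighten the exposition is the one-signedness step: rather than saying the intermediate values ``lie between'' the endpoints, it is cleaner to note directly that the slope $\theta_{n-1}$ has sign $(-1)^{n-1}$ while $\theta_{n,0}=\theta_{n-2}$ has sign $(-1)^n$, so $\theta_{n,j}$ moves strictly toward $0$ without crossing it (because $\theta_{n,d_n}=\theta_n$ still has sign $(-1)^n$). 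Your observations about when the weak inequalities are sharp are also correct: $q^R_{n-1}=q^R_{n,1}$ exactly at $n=1$, and $|\theta_{n-1}|<|\theta_{n,d_n-1}|$ is always strict for irrational $x$. One small omission worth a sentence: the passage from the defining fraction $(p^R_{n-2}+jp^R_{n-1})/(q^R_{n-2}+jq^R_{n-1})$ to the coordinatewise identities for $p^R_{n,j}$ and $q^R_{n,j}$ uses that this fraction is already in lowest terms, which follows from $|p^R_{n-1}q^R_{n-2}-p^R_{n-2}q^R_{n-1}|=1$.
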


By the above proposition and Theorem~\ref{Thm:intermediate}, if $p^R_{n-1}/q^R_{n-1}$ is a $1$-rational, then $p^R_{n,j}/q^R_{n,j}$ is not an OOCF principal convergent for any $1\le j < d_n$.
If $p^R_{n-1}/q^R_{n-1}$ is an $\infty$-rational and $p^R_{n,j}/q^R_{n,j}$ is a $1$-rational, then $p^R_{n,j}/q^R_{n,j}$ is an OOCF principal convergent.

%\section{An invariant density of $T_\mathrm{OOCF}$}\label{Sec:InvMsr}
%Let $D_1$ be a map from $[0,1]$ to $\mathbb{R}^2$ which defined as
%$$
%t\mapsto \left(\frac{1-t^2}{1+t^2},\frac{2t}{1+t^2}\right).
%$$
%This is a projection from a segment of $y$-axis whose end points are $(0,0)$ and $(0,1)$ to a unit circle with the center of projection $(-1,0)$.
%Let $D_2$ be a map from $[0,1]$ to $\mathbb{R}^2$ which defined as
%$$
%s \mapsto \left(\frac{2s}{1+t^2},\frac{1-s^2}{1+s^2}\right).
%$$
%This is a projection from a segment of $x$-axis whose end points are $(0,0)$ and $(1,0)$ to a unit circle with the center of projection $(0,-1)$.

%Then
%$$
%t=D_1^{-1}D_2(s)=\frac{1-s}{1+s}\quad \text{ and } \quad
%\left|\frac{\text{d}t}{\text{d}s}\right|=\frac{2}{(1+s)^2}.
%$$
%An invariant density of $T_\mathrm{EICF}$ is
%$$
%\left(\frac{1}{s+1}-\frac{1}{s-1}\right)\text{d}s.
%$$
%Since $T_\mathrm{OOCF}$ is a conjugation of $T_\mathrm{EICF}$ by $D_1^{-1}D_2$ and
%$$
%\frac{2|\text{d}s|}{1-s^2}=\frac{2|\text{d}t|}{1-s^2}\cdot\frac{(1+s)^2}{2}=\frac{|\text{d}t|}{t},
%$$
%an invariant densidy of $T_\mathrm{OOCF}$ is
%$
%\frac{\text{d}t}{t}.
%$

%{\color{red} Should clean the references}

\section*{Acknowledgements} 
We thank the referee for her/his valuable remarks and suggestions which significantly improve the presentation of the paper.  
D. K. and S. L. were supported by the National Research Foundation of Korea (NRF-2018R1A2B6001624).
S. L. also acknowledges the support of the Centro di Ricerca Matematica Ennio de Giorgi and of UniCredit Bank R\&D group for financial support through the Dynamics and Information Theory Institute at the Scuola Normale Superiore.


\begin{thebibliography}{60}
\bibitem{Alp05}{
   Alperin, R.C.: 
   The modular tree of Pythagoras.
   Amer. Math. Monthly.
   {\bf 112}(9), 807--816
   (2005)
%   issn={0002-9890},
%   review={\MR{2179860}},
%   doi={10.2307/30037602},
}

\bibitem{Bar63}{
   {Barning, F.J.M.}:
   {On Pythagorean and quasi-Pythagorean triangles and a generation process with the help of unimodular matrices}.
   {Math. Centrum Amsterdam Afd. Zuivere Wisk.}
   {\bf 1963}(ZW-011),
   37 (1963) 
%   (in Dutch).
%   review={\MR{0190077}},
}

\bibitem{Ber34}{
  Berggren, B.:
  Pytagoreiska triangular. 
  {Tidskr. Element\"ar Mat. Fys. Kemi}. 
%  {\it Tidskrift f\"or Element\"ar Matematik, Fysik och Kemi},
  {\bf 17}, 129--139 (1934) 
%10
}

\bibitem{BoMe18}{
  {Boca, F. P.}, {Merriman, C.}:
  {Coding of geodesics on some modular surfaces and applications to odd and even continued fractions}.
  {Indag. Math.}
  {\bf 29}(5), {1214--1234} {(2018)} 
%   issn={0019-3577},
%   review={\MR{3853422}},
%   doi={10.1016/j.indag.2018.05.004},
}

\bibitem{CK1}{
  {Cha, B.},
  {Kim, D.H.}:
  {Number theoretical properties of Romik's dynamical system}.
  {Bull. Korean Math. Soc.}
    {\bf 57}(1), {251--274} (2020)
%  doi={https://doi.org/10.4134/BKMS.b190163}, 
}

\bibitem{CK2}{
  {Cha, B.},
  {Kim, D.H.}:
  { Intrinsic Diophantine approximation of a unit circle and its Lagrange spectrum} (2019)
  (available at \href{https://arxiv.org/abs/1903.02882}{\textsf{arXiv:1903.02882}})
}

\bibitem{CNT}{
  {Cha, B.},
  {Nguyen, E.},
  {Tauber, B.}:
  {Quadratic forms and their Berggren trees}.
  {J. Number Theory}.
    {\bf 185}, {218--256} (2018) 
%  doi={https://doi.org/10.1016/j.jnt.2017.09.003},
}

\bibitem{Con}{
	{Conrad, K.}:
	{Pythagorean descent}. (2007)\\
	(available at \href{http://www.math.uconn.edu/~kconrad/blurbs/linmultialg/descentPythag.pdf}{\textsf{http://www.math.uconn.edu/{\~{}}kconrad/blurbs/linmultialg/descentPythag.pdf}})
}

\bibitem{ito1989algorithms}{
   {Ito, S.}: 
   {Algorithms with mediant convergence and their metrical theory}. 
   {Osaka J. Math}.
    {\bf 26}(3), {557--578} (1989)
}

\bibitem{JaLi88}{
   {Jager, H.},
   {Liardet, P.}:
   {Distributions arithm\'{e}tiques des d\'{e}nominateurs de convergents de fractions continues}.
  {Indag. Math.},
  {\bf 91}(2), {181--197} (1988) 
%   issn={0019-3577},
%   review={\MR{952514}},
}

%\bibitem{GLMWY}{
%   {Graham, Ronald L.},
%   {Lagarias, Jeffrey C.},
%   {Mallows, Colin L.},
%   {Wilks, Allan R.},
%   {Yan, Catherine H.},
%   {Apollonian circle packings: number theory},
%   {J. Number Theory},
%    {100},
%    {2003},
%    {1},
%    {1--45},
%   issn={0022-314X},
%   review={\MR{1971245}},
%   doi={10.1016/S0022-314X(03)00015-5},
%}

\bibitem{Kei17}{
   {Keita, A.}:
   {Continued fractions and parametric geometry of numbers}.
  {J. Th\'eor. Nombres Bordeaux}.
  {\bf 29}(1), {129--135}  (2017)
%   issn={1246-7405},
%   review={\MR{3614519}},
}

\bibitem{Khi63}{
  {Khinchin A.Y.}:
  {Continued fractions, translated from the 3rd (1961) Russian ed., reprint of the 1964 translation}. 
  Dover Publications, (1997)
}

\bibitem{Kra91}{
   {Kraaikamp, C.}:
   {A new class of continued fraction expansions}.
  {Acta Arith.}
  {\bf 57}(1), {1--39} (1991)
%   issn={0065-1036},
%   review={\MR{1093246}},
%   doi={10.4064/aa-57-1-1-39},
}

\bibitem{KrLo96}{
   {Kraaikamp, C.},
   {Lopes, A.}:
   {The theta group and the continued fraction expansion with even partial quotients},
  {Geom. Dedicata}.
  {\bf 59}(3), {293--333} {(1996)}
%   issn={0046-5755},
%   review={\MR{1371228}},
%   doi={10.1007/BF00181695},
}

\bibitem{Moe82}{
   {Moeckel, R.}:
   {Geodesics on modular surfaces and continued fractions}.
 {Ergod. Th. \& Dynam. Sys.},
  {\bf 2}(1), {69--83} (1982)
%   issn={0143-3857},
%   review={\MR{684245}},
%   doi={10.1017/s0143385700009585},
}

\bibitem{Pan20}{
 {Panti, G.}:
 {Billiards on pythagorean triples and their Minkowski functions}.
 Discrete Contin. Dyn. Syst.
 {\bf 40}(7), 4341--4378 (2020)
}


\bibitem{RoSz92}{  
  {Rockett A.M., Sz\"usz P.}:
  {Continued fractions}.
  World Scientific Publishing Co.
 {(1992})

}

\bibitem{Rom08}{
   {Romik, D.}:
   {The dynamics of Pythagorean triples}.
   {Trans. Amer. Math. Soc.}
 {\bf 360}(11), {6045--6064} (2008)
%   issn={0002-9947},
%   review={\MR{2425702 (2009i:37101)}},
%   doi={10.1090/S0002-9947-08-04467-X},
}

\bibitem{Sch82}{
  {Schweiger, F.}:
  {Continued fractions with odd and even partial quotients}.
  {Arbeitsber. Math. Inst. Univ. Salzburg}.
  {\bf 4}, {59--70} (1982)
}

\bibitem{Sch84}{
  {Schweiger, F.}:
  {On the approximation by continued fractions with odd and even partial quotients}.
  {Arbeitsber. Math. Inst. Univ. Salzburg}.
  {\bf 1}(2), {105--114} (1984)
}

\bibitem{schweiger1995ergodic}{
  Schweiger, F.:
  {Ergodic theory of fibred systems and metric number theory}. 
  Oxford University Press, 
  {(1995)}
}

\bibitem{series1985modular}{
  {Series, C.}:
  {The modular surface and continued fractions}.
  {J. London Math. Soc.}
  {{\bf s2-31}(1)}, {69--80} {(1985)}  
}

\bibitem{ShWa14}{
	{Short, I.},
	{Walker, M.}:
	{Even-integer continued fractions and the Farey tree}.
  In: {Symmetries in Graphs, Maps, and Polytopes Workshop}
  {(Springer Proc. in Math. Stat. 159)},
  Springer, (2016)
  {pp. 287--300}
}
\end{thebibliography}
\end{document}